\newcommand*{\mailto}[1]{\href{mailto:#1}{\nolinkurl{#1}}}
\newcommand{\arxiv}[1]{\href{http://arxiv.org/abs/#1}{arXiv:#1}}
\newtheorem{theorem}{Theorem}[section]
\newtheorem{definition}[theorem]{Definition}
\newtheorem{lemma}[theorem]{Lemma}
\newtheorem{proposition}[theorem]{Proposition}
\newtheorem{corollary}[theorem]{Corollary}
\newtheorem{remark}[theorem]{Remark}
\newcommand{\R}{{\mathbb R}}
\newcommand{\N}{{\mathbb N}}
\newcommand{\C}{{\mathbb C}}
\newcommand{\Wr}{\mathsf{w}}
\newcommand{\Sr}{\mathrm{s}}
\newcommand{\E}{\mathrm{e}}
\newcommand{\I}{\mathrm{i}}
\newcommand{\supp}{\mathrm{supp}}
\newcommand{\re}{\mathrm{Re}}
\newcommand{\loc}{{\mathrm{loc}}}
\newcommand{\cc}{{\mathrm{c}}}
\newcommand{\T}{\mathrm{T}}
\newcommand{\wt}{\tilde}
\newcommand{\OO}{\mathcal{O}}
\newcommand{\oo}{o}
\newcommand{\ledot}{\,\cdot\,}
\newcommand{\redot}{\cdot\,}
\newcommand{\Iso}[1]{\mathrm{Iso}(#1)}
\newcommand{\qd}{{[1]}}
\newcommand{\dip}{\upsilon}
\newcommand{\SM}{\mathcal{R}}
\newcommand{\Peakons}{\mathcal{P}}
\newcommand{\CHdom}{\mathcal{D}}
\newcommand{\String}{\mathcal{S}}
\newcommand{\dlmf}[1]{%
\cite[%
 \def\nextitem{\def\nextitem{, }}%
 \@for \el:=#1\do{\nextitem\expandafter\dlmf@eq@href\el...\end}%
]{dlmf}%
}
\def\dlmf@eq@href#1.#2.#3.#4\end{%
  \href{http://dlmf.nist.gov/#1.#2.E#3}{(#1.#2.#3)}}
\numberwithin{equation}{section}
\begin{document}

\title[Camassa--Holm flow with step-like initial data]{The conservative Camassa--Holm flow\\ with step-like irregular initial data}
 
\author[J.\ Eckhardt]{Jonathan Eckhardt}
\address{Department of Mathematical Sciences\\ Loughborough University\\ Epinal Way\\ Loughborough\\ Leicestershire LE11 3TU \\ UK}
\email{\mailto{J.Eckhardt@lboro.ac.uk}}

\author[A.\ Kostenko]{Aleksey Kostenko}
\address{Faculty of Mathematics and Physics\\ University of Ljubljana\\ Jadranska ul.\ 19\\ 1000 Ljubljana\\ Slovenia\\ and 
Faculty of Mathematics\\ University of Vienna\\ Oskar-Morgenstern-Platz 1\\ 1090 Vienna\\ Austria}
\curraddr{Faculty of Mathematics and Physics\\ University of Ljubljana\\ Jadranska 19\\ 1000 Ljubljana\\ Slovenia\\ and Institute for Analysis and Scientific Computing\\Vienna University of Technology\\Wiedner Hauptstra\ss e 8–10/101\\
1040 Wien\\ Austria}\email{\mailto{Aleksey.Kostenko@fmf.uni-lj.si}}

\thanks{\href{http://dx.doi.org/10.1112/plms.70050}{Proc.\ Lond.\ Math.\ Soc.\ (3) {\bf 130} (2025), no.~5, Paper No.~e70050, 42pp.}}

\thanks{{\it Research of A.K.\ is supported by the Austrian Science Fund (FWF) under Grant I-4600 and by the Slovenian Research Agency (ARRS) under Grant No.\ N1-0137.}}

\keywords{Camassa--Holm flow, inverse spectral transform, step-like initial data}
\subjclass[2020]{Primary 37K15, 34L05; Secondary 34A55, 35Q51}   

\begin{abstract}
 We extend the inverse spectral transform for the conservative Camassa--Holm flow on the line to a class of initial data that requires strong decay at one endpoint but only mild boundedness-type conditions at the other endpoint. 
 The latter condition appears to be close to optimal in a certain sense for the well-posedness of  the conservative Camassa--Holm flow.
 As a byproduct of our approach, we also find a family of new (almost) conservation laws for the Camassa--Holm equation, which could not be deduced from its bi-Hamiltonian structure before and which are connected to certain Besov-type norms (however, in a rather involved way). 
 These results appear to be new even under positivity assumptions on the corresponding momentum, in which case the conservative Camassa--Holm flow coincides with the classical Camassa--Holm flow and no blow-ups occur.
\end{abstract}

\dedicatory{Dedicated to the memory of Daphne Jane Gilbert}

\maketitle

\section{Introduction}

The Camassa--Holm equation 
 \begin{align}\label{eqnCH}
    u_{t} -u_{xxt}  = 2u_x u_{xx} - 3uu_x + u u_{xxx}
 \end{align}
was first noticed by Fokas and Fuchsteiner~\cite{fofu} to be formally integrable. 
Its weak formulation as a non-local conservation law
\begin{align}
\label{eqnCHweak}
  u_t + u u_x + P_x  = 0, 
\end{align}
 where the source term $P$ is defined (for suitable functions $u$) as the convolution 
 \begin{align}\label{eq:PequCHdef}
 P = \frac{1}{2}\E^{-|\cdot|} \ast \biggl(u^2 + \frac{1}{2}u_x^2\biggr), 
 \end{align} 
is reminiscent of the three dimensional incompressible Euler equation. 
  It turned out that equation~\eqref{eqnCH} exhibits a rich mathematical structure and its intensive study started with the discoveries of Camassa and Holm~\cite{caho93} (we only refer to a very brief selection of articles~\cite{besasz00, bokoshte09, brco07, co01, co05, coes98, como00, cost00, geho08, hora07, le05b, mc03, mc04, mis, mis02, xizh00}).  
  For instance, the Bott--Virasoro group serves for equation~\eqref{eqnCHweak} as a symmetry group and thus the Camassa--Holm equation can be viewed as a geodesic equation with respect to a right invariant metric~\cite{mis}.   
   Among many celebrated models for shallow water waves (for example, the Korteweg--de Vries equation or the Benjamin--Bona--Mahoney equation), the significance of equation~\eqref{eqnCH} stems from the fact that it is the first (and long sought after~\cite{with}) model featuring the following properties: 
   (a)~complete integrability, as it admits a Lax pair formulation, 
   (b)~solitons, including peaked ones (so-called {\em peakons}), and 
   (c)~finite time blow-up of smooth solutions that resembles wave-breaking to some extent (see~\cite{coes98b}). 
  More specifically, singularities develop in a way that the solution $u$ remains bounded pointwise, while the spatial derivative $u_x$ tends to $-\infty$ at some points. 
  However, the $H^1$ norm of $u$ remains bounded (actually, the $H^1$ norm serves as a Hamiltonian for the Camassa--Holm equation and is thus conserved) and $u$ approaches a limit weakly in $H^1$ as the blow-up happens, which raises the natural question of continuation past the blow-up. 
  Indeed, it was found that the Camassa--Holm equation possesses global weak solutions~\cite{xizh00} which are not unique however and hence continuation of solutions after blow-up is a delicate matter. 
 
  A particular kind of global weak solutions are the so-called {\em conservative solutions}, the notion of which was suggested independently in~\cite{brco07} and~\cite{hora07}.   
  These are weak solutions $(u,\mu)$ of the system 
  \begin{align}
 \begin{split}\label{eqnOurCH}
  u_t + u u_x + P_x & = 0, \\
  \mu_t + (u\mu)_x & = (u^3 - 2Pu)_x, 
 \end{split}
 \end{align}
 where the auxiliary function $P$ satisfies
 \begin{align}\label{eq:PequDef}
  P - P_{xx} & = \frac{u^2+ \mu}{2}.
 \end{align} 
We will call system~\eqref{eqnOurCH} the {\em two-component Camassa--Holm system} because it includes the Camassa--Holm equation as well as its two-component generalization (see \cite{chlizh06, coiv08, esleyi07, hoiv11})
 \begin{align}
 \begin{split}\label{eqn2CH}
    u_{t} -u_{xxt} & = 2u_x u_{xx} - 3uu_x + u u_{xxx} - \varrho \varrho_x, \\
   \varrho_t & = -u_x \varrho - u\varrho_x,
 \end{split}
 \end{align}
where one just needs to set $\mu = u^2 + u_x^2 + \varrho^2$. 
The role of the additional positive Borel measure $\mu$ is to control the loss of energy at times of blow-up. 
More precisely, as the model example of a peakon--antipeakon collision illustrates (see~\cite[\S~6]{brco07} for example), at times when the solution blows up, the corresponding energy, measured by $\mu$, concentrates on sets of Lebesgue measure zero.
Solutions of this kind have been constructed by a generalized method of characteristics that relied on a transformation from Eulerian to Lagrangian coordinates and was accomplished for various classes of initial data in~\cite{brco07, hora07, grhora12, grhora12b}.
 
From our point of view, conservative solutions are of special interest because they preserve the integrable structure of the Camassa--Holm equation and can be obtained by employing the inverse spectral transform method~\cite{ConservMP,ConservCH}. 
This approach is based on the solution of an inverse problem, which is equivalent to that for so-called {\em generalized indefinite strings}~\cite{IndefiniteString}, a spectral problem that generalizes Krein strings and originated from work of Krein and Langer~\cite{krla79} on the indefinite moment problem and having close connections with $2\times 2$ canonical systems, a central object of Krein--de Branges theory.
The main purpose of this article is to establish existence and stability of global weak solutions for the two-component Camassa--Holm system~\eqref{eqnOurCH} with initial data of low regularity and relatively mild boundedness restrictions.    
More specifically, we will consider this system in the phase space $\CHdom$ defined below. 

\begin{definition}\label{def:Dspace}
 The set $\CHdom$ consists of all pairs $(u,\mu)$ such that $u$ is a real-valued function in $H^1_{\loc}(\R)$ and $\mu$ is a positive Borel measure on $\R$ with
\begin{align}\label{eqnmuac}
   \int_B u(x)^2 + u'(x)^2\, dx \leq  \mu(B)
\end{align}
for every Borel set $B\subseteq\R$, satisfying the asymptotic growth restrictions  
\begin{align}
 \label{eqnMdef-}   \int_{-\infty}^0 \E^{-s} \bigl(u(s)^2 + u'(s)^2 \bigr) ds +  \int_{-\infty}^{0} \E^{-s} d\dip(s) & < \infty,  \\
 \label{eqnMdef+}   \limsup_{x\rightarrow\infty}\, \E^{x} \biggl(\int_{x}^{\infty}\E^{-s}(u(s) + u'(s))^2ds + \int_{x}^{\infty}\E^{-s}d\dip(s)\biggr) & < \infty,
\end{align}
where $\dip$ is the positive Borel measure on $\R$ defined such that 
\begin{align}\label{eqndipdef}
 \mu(B) = \dip(B) + \int_B u(x)^2 + u'(x)^2\, dx. 
\end{align}
\end{definition}

We prefer to work with pairs $(u,\mu)$ and the unusual condition~\eqref{eqnmuac} instead of the simpler definable pairs $(u,\dip)$ since, for example, the measure $\mu$ is more natural when considering suitable notions of convergence; see Definition~\ref{def:topolD} and Section~\ref{secSpace} for details.
Condition~\eqref{eqnMdef-} in this definition requires strong decay of both, the function $u$ and the measure $\dip$, at $-\infty$ (in particular, the condition on $u$ in~\eqref{eqnMdef-} is equivalent to the function $x\mapsto\E^{\nicefrac{-x}{2}}u(x)$ belonging to $H^1$ near $-\infty$), whereas condition~\eqref{eqnMdef+}\footnote{Upon making a simple change of variables and denoting $h(x) = (u+u')^2\circ \log(1/x)$, condition~\eqref{eqnMdef+} is obviously related to the boundedness of the classical Hardy operator $h\mapsto \frac{1}{x}\int_0^x h(s)ds$ (see~\cite{edev, muc} for example).} 
  on the growth near $+\infty$ is rather mild and satisfied as soon as $u+u'$ is bounded and $\dip$ is a finite measure for example.
In particular, these conditions allow initial data that is strongly decaying near $-\infty$ and eventually periodic/almost periodic near $+\infty$ or step-like profiles that are asymptotically constant near $+\infty$.
Of course, the conditions at $-\infty$ and $+\infty$ could also be switched due to the symmetry
\begin{align}
  (x,t) \mapsto (-x,-t)
\end{align}
of the two-component Camassa--Holm system. 
However, we are not able to allow nonzero asymptotics at both endpoints because the corresponding inverse spectral and scattering theory are not sufficiently well developed.\footnote{Let us mention that despite some similarities with the recent work of Grudsky and Rybkin~\cite{grry15,ry18} on the Korteweg--de Vries equation with step-like initial data, our results seem to be essentially different. On the other hand, the simplicity of the spectrum of the underlying Lax operator may indicate some parallels with the study of the Benjamin--Ono equation on the line~\cite{wu16,wu17,sau} (especially when one additionally assumes that $u-1\in H^1$ near $+\infty$) or with the cubic Szeg\H{o} equation on the torus~\cite{GG17,  gpt}. However, in contrast to the Benjamin--Ono, the latter case admits a complete solution of the corresponding inverse spectral problem, see the forthcoming~\cite{ISPforCH}.}  

Existence and stability of global weak solutions for initial data in the phase space $\CHdom$ will be established by means of the inverse spectral transform method (originally developed for the Korteweg--de Vries equation~\cite{abcl91, abse81, ecvH81} by Gardner, Greene, Kruskal and Miura~\cite{gagrkrmi67}). 
In fact, our results imply well-posedness of the conservative Camassa--Holm flow on $\CHdom$ in some sense (see Remark~\ref{rem:BressanUniq} for further details). 
To this end, we will consider the associated isospectral problem  
\begin{align}\label{eqnISP}
 -f'' + \frac{1}{4} f = z\, \omega f + z^2 \dip f, 
\end{align}
where $z$ is a complex spectral parameter and $\omega = u - u_{xx}$ in a distributional sense. 
Despite a large amount of articles, relatively little is known about inverse problems for~\eqref{eqnISP} when $\omega$ is allowed to be indefinite and $\dip$ to be not zero. 
In fact, most of the literature on this subject restricts to the case when $\dip$ vanishes identically and $\omega$ is strictly positive and smooth (in which case the spectral problem can be transformed into a standard potential form that is known from the Korteweg--de Vries equation~\cite{besasz98, le04, mc03b}).  
Apart from the explicitly solvable finite dimensional case~\cite{besasz00, ConservMP, InvPeriodMP} and~\cite{ConservCH}, only insufficient partial uniqueness results~\cite{be04, bebrwe08, bebrwe12, bebrwe15, LeftDefiniteSL, CHPencil, TFCPeriod, IsospecCH} have been obtained so far for the inverse problem in the indefinite case.  

Our results rely on recent progress in the spectral theory of generalized indefinite strings and canonical systems achieved in, respectively,~\cite{IndefiniteString, DSpec} and~\cite{rowo20}. 
In particular, the results obtained there imply that due to condition~\eqref{eqnMdef-}, the underlying spectrum will turn out to be simple, whereas condition~\eqref{eqnMdef+} guarantees that zero does not belong to the spectrum. 
 As a consequence, the associated spectral data consists of a spectral measure $\rho$ without mass near zero. 
Motivated by the case of conservative multi-peakons~\cite{ConservMP} and the well-known time evolution of spectral data for classical solutions of the Camassa--Holm equation~\cite[Section~6]{besasz98}, it is natural to define a flow on these associated spectral measures by setting  
\begin{align}\label{eqnCHevolint}
  \rho(B,t) = \int_B \E^{-\frac{t}{2\lambda}} d\rho(\lambda,0)
\end{align} 
for every Borel set $B\subseteq\R$. 
In order for this to be well-defined, it is desirable for the spectral measures $\rho$ not to have mass around zero, which is why we impose the growth restriction~\eqref{eqnMdef+}.
In fact, it will turn out that condition~\eqref{eqnMdef+} is necessary and sufficient for a spectral gap around zero to exist. 
From this perspective, the restriction~\eqref{eqnMdef+} appears to be nearly optimal; see Remark~\ref{rem:Optimal} for further details. 
The flow defined in this way gives rise to global weak solutions of the two-component Camassa--Holm system by means of the inverse spectral transform. 
We will keep referring to these solutions as {\em conservative} even though it is not obvious what quantities are actually conserved (as long as it is not additionally assumed that the measure $\mu$ is finite, in which case we recover the global conservative solutions constructed in~\cite{brco07} and~\cite{hora07} by a different approach). 
On the other hand, in view of the spectral data, this is clear however: The spectrum as well as the spectral types (the absolutely continuous spectrum, the singular continuous spectrum and the point spectrum) are preserved over time since spectral measures for different times are mutually absolutely continuous. 
As a consequence, any property of $(u,\mu)$ that can be read off the spectrum is conserved too.
We will use this observation to obtain  a continuous family of almost conserved quantities (that is, quantities that allow a global in time upper and lower bound) that appear to be new and do not follow from the bi-Hamiltonian structure directly. 
Let us also mention here that we will see in Section~\ref{secIST} that indeed any kind of simple spectrum can occur as long as there is a gap in the spectrum around zero. 

This approach to obtain global weak solutions of the two-component Camassa--Holm system~\eqref{eqnOurCH} can also be seen as an extension by continuation of the conservative Camassa--Holm flow from multi-peakons to the whole phase space $\CHdom$.\footnote{It was pointed out to us by one of the referees that this approach is reminiscent of the construction of solutions to the Korteweg--de Vries equation by considering a closure of multi-soliton solutions in suitable topologies (see, for instance, \cite{gkz92,hmm21,kot24,lun90,mar91}). One may also think of this in the vein of so-called soliton gases (see~\cite{elka,zak}).} 
More precisely, it was shown in~\cite{ConservMP} that the conservative Camassa--Holm flow is completely integrable in the case of multi-peakons, that is, that~\eqref{eqnISP} is a corresponding isospectral problem and that the evolution of spectral data is governed by~\eqref{eqnCHevolint}. 
It was shown later in~\cite{ConservCH} that the analysis extends to the case when the initial data has sufficiently fast decay at both endpoints (when one imposes the same decay condition as in~\eqref{eqnMdef-} also at $+\infty$). 
However, recent results in~\cite{DSpec} enable us to extend the conservative Camassa--Holm flow to a much wider class of initial data, namely, to all of $\CHdom$.
 To the best of our knowledge, the whole phase space $\CHdom$ is not covered by any previous constructions of weak solutions. 
 Moreover, it will turn out that the novel quantity in condition~\eqref{eqnMdef+} serves as an almost conservation law in the sense that the quantity 
\begin{align}
E(u,\mu) = \sup_{x\in \R}\, \E^{\frac{x}{2}} \biggl(\int_{x}^{\infty}\E^{-s}(u(s) + u'(s))^2ds + \int_{x}^{\infty}\E^{-s}d\dip(s)\biggr)^{\nicefrac{1}{2}}
\end{align}
admits global in time bounds. 
More precisely, if the pair $(u,\mu)$ is a conservative solution to the two-component Camassa--Holm system~\eqref{eqnOurCH}, then
\begin{align}\label{eq:mainConservLaw}
\frac{1}{6\sqrt{2}}E(u(\ledot,0),\mu(\ledot,0))\le E(u(\ledot,t),\mu(\ledot,t)) \le 6\sqrt{2}\, E(u(\ledot,0),\mu(\ledot,0)).
\end{align}
Indeed, the quantity $E(u,\mu)$ is controlled by the size of the spectral gap around zero and the latter is clearly preserved under the evolution defined by~\eqref{eqnCHevolint}. 
This observation will play a crucial role in our analysis. 

 Another advantage of our approach is that it completely linearizes the conservative Camassa--Holm flow. 
 More specifically, we will see in Section~\ref{secIST} that the phase space $\CHdom$ decomposes into invariant subsets, each of which can be parametrized by a subset of positive functions $\vartheta$ in $L^1_{\loc}(\R;\rho_0)$, where $\rho_0$ is some representative spectral measure. 
 With respect to this parametrization, the conservative Camassa--Holm flow becomes a simple linear flow given by
  \begin{align}
     \frac{\partial}{\partial t} \log\vartheta(\lambda,t) = -\frac{1}{2\lambda}.
  \end{align}
  This supports the view of the Camassa--Holm equation as a completely integrable Hamiltonian system and can be regarded as a kind of action-angle variables. 

\begin{remark}\label{rem:examples}
 As an example, let us mention a couple of pairs $(u,\mu)$ in $\mathcal{D}$ for which the corresponding spectral measure $\rho$ can be computed explicitly:
 \begin{enumerate}[label=(\roman*), ref=(\roman*), leftmargin=*, widest=ii]
    \item\label{itmISPforCH} 
Consider the pair $(u,\mu)$ in $\mathcal{D}$ with the function $u$ defined by 
  \begin{align}
    u(x) = 1-\E^{-x}\log(1+\E^x)
  \end{align}
  and such that the measure $\dip$ vanishes identically. 
  The corresponding spectral measure $\rho$ is then given by (use the transformation in Lemma~\ref{lemumuwdip} and~\cite[Example~I]{ISPforCH}) 
\begin{align}\label{eq:Rhoalpha}
    \rho(B) = \frac{1}{\pi} \int_{B\cap [1/4,\infty)}\sqrt{\lambda-\frac{1}{4}}\,\frac{d\lambda}{\lambda}.
    \end{align} 
    \item\label{itmLaguerre} More interesting examples stem from infinite multi-peakon profiles of the form
    \begin{align}
    u(x) & = \frac{1}{2}\sum_{n=1}^\infty p_n \E^{-|x-x_n|}.  
\end{align}
For instance, setting (we again suppose that the measure $\dip$ vanishes identically) 
\begin{align}
x_n & = \log\left( \sum_{k=1}^n L_{k-1}(-1)^2\right), & p_n & = \frac{\sum_{k=1}^n L_{k-1}(-1)^2}{nL_{n-1}(-1)L_{n}(-1)},
\end{align}
where the $L_n$ are the classical Laguerre polynomials~\cite{dlmf}, we end up with a shifted Laguerre weight as the corresponding spectral measure:
\begin{align}\label{eq:LaguerreMeasure}
    \rho(B) = \int_{B\cap [1,\infty)} \E^{1-\lambda} d\lambda.
\end{align}
It is not immediately clear that the function $u$ has a step-like form in this case, however,  in view of~\cite[Theorem~13.4]{ISPforCH}, it follows that $u - \frac{1}{4} \in H^1[0,\infty)$. 

On the other hand, one may take infinite multi-peakon profiles that are arranged periodically at infinity (for instance, take $x_n=n$ and $p_n = p_{n+N}$ for some $N\in\N$ and all $n\in\N$).
 In this case, one is again able to compute the corresponding spectral measure explicitly.   
  \end{enumerate}
Asymptotic long-time behavior of the global weak solutions corresponding to these initial profiles appears to be an intriguing topic, which will be addressed elsewhere. 
\end{remark}

Let us now briefly outline the content of this article. 
In Section~\ref{secSpace}, we discuss various properties of the phase space $\CHdom$, introduce a notion of convergence and relate it to some standard topologies. 
We will also establish a crucial connection between the set $\CHdom$ and a certain family of generalized indefinite strings, which will enable us to employ the direct and inverse spectral theory of generalized indefinite strings as developed earlier in~\cite{IndefiniteString,DSpec}. 

Section~\ref{secSP} deals with the direct spectral theory of~\eqref{eqnISP} with coefficients in $\CHdom$. 
Condition~\eqref{eqnMdef-} enables us to introduce a particular fundamental system of solutions to the differential equation~\eqref{eqnISP}, which is then used to introduce the key object of our spectral analysis; the Weyl--Titchmarsh function $m$ and the associated spectral measure $\rho$. 
It turns out that the Weyl--Titchmarsh function admits a particular integral representation, which guarantees that it is uniquely determined by the spectral measure in this case. 
The remaining part of this section includes results about spectral gap estimates as well as characterizations of coefficients that give rise to positive spectrum  and purely discrete spectrum.
 The importance of these results stems from the fact that all these spectral properties are conserved under the flow~\eqref{eqnCHevolint} and thus so are the corresponding asymptotic properties of solutions. 

We will then provide a complete solution to the inverse problem for~\eqref{eqnISP} with coefficients in $\CHdom$ in Section~\ref{secIST}. 
 First of all, we will show that the map $(u,\mu)\mapsto \rho$ is a bijection between the set $\CHdom$ and the set $\SM_0$ of all positive Poisson integrable Borel measures $\rho$ on $\R$ whose topological support has a gap around zero. 
 Moreover, we will establish continuity of this map with respect to suitable topologies and show that multi-peakon profiles are dense in $\CHdom$, which is necessary for being able to extend the conservative Camassa--Holm flow to all of $\CHdom$ later. 

In the final Section~\ref{secCCH}, we discuss the conservative Camassa--Holm flow on $\CHdom$. 
 More specifically, we define a flow on $\CHdom$ by means of the inverse spectral transform via~\eqref{eqnCHevolint}. 
 Our main result proves that this flow gives rise to global weak solutions to the two-component Camassa--Holm system~\eqref{eqnOurCH}. 
 The remaining part of this section provides a number of spatial asymptotic properties of solutions that are preserved by the flow as well as novel almost conserved quantities. 
 We note that these are connected with certain Besov-type norms, however, in a rather involved way (see Remark~\ref{rem:Besov}). 
 On the other hand, under additional positivity assumptions on the momentum $\omega= u-u_{xx}$, these conservation laws are simply given in terms of the $L^p$ norms of $u+u_x$ with $p\in (1/2,\infty]$.  
  
Let us finish this introduction with one more remark. 
Usually, when dealing with completely integrable partial differential equations, one first proves existence of weak or classical solutions in certain natural phase spaces (which are usually determined by corresponding conserved quantities).
 Afterwards, quite often under additional restrictive assumptions that are dictated by the need to solve direct and inverse spectral/scattering problems, one employs the inverse spectral/scattering transform approach to reconstruct and study these solutions.\footnote{The most illustrative example here is the Korteweg--de Vries equation, which is well-posed in $H^s$ for $s\ge -1$ (see~\cite{bou,kap06,kato83,kpv, kivi}). However, it is unclear at the moment how to apply the inverse spectral/scattering transform approach even in the case of initial data in  $H^1(\R)$.}  
 In this article, we are able to establish existence of global weak solutions by employing the inverse spectral transform approach for a large class of initial data for which we are unaware of previous existence results. 
 Moreover, the approach developed here suggests a rather large class of new (almost) conservation laws, which are hard to be noticed by the usual techniques using the bi-Hamiltonian structure. 
 Indeed, a serious obstacle in the case of the Camassa--Holm equation lies in the fact that the structure of the conserved quantities obtained from the bi-Hamiltonian structure is largely unknown (see~\cite{fisc99,le05} for example), which is in sharp contrast to the well-studied Korteweg--de Vries and Nonlinear Schr\"odinger equations.\footnote{However, even for these classical completely integrable models it was proved only recently that $H^s$-norms with negative $s\in [-1,0)$ for the KdV and $s\in (-1/2,0)$ for the NLS are (almost) conservation laws; see~\cite{bede,buko15,kota,kvz,ry10}.} 

\subsection*{Notation} 
 For an interval $I\subseteq\R$, we denote with $H^1_{\loc}(I)$, $H^1(I)$ and $H^1_{\cc}(I)$ the usual Sobolev spaces defined by 
\begin{align}
 H^1_{\loc}(I) & =  \lbrace f\in AC_{\loc}(I) \,|\, f'\in L^2_{\loc}(I) \rbrace, \\
 H^1(I) & = \lbrace f\in H^1_{\loc}(I) \,|\, f,\, f'\in L^2(I) \rbrace, \\ 
 H^1_{\cc}(I) & = \lbrace f\in H^1(I) \,|\, \supp(f) \text{ compact in } I \rbrace.
\end{align}
 The space of distributions $H^{-1}_{\loc}(I)$ is the topological dual space of $H^1_{\cc}(I)$. 
 A distribution in $H^{-1}_{\loc}(I)$ is said to be {\em real} if it is real for real-valued functions in $H^1_{\cc}(I)$. 

  For integrals of a function $f$ that is locally integrable with respect to a Borel measure $\mu$ on an interval $I$, we will employ the notation 
\begin{align}\label{eqnDefintmu}
 \int_x^y f\, d\mu = \begin{cases}
                                     \int_{[x,y)} f\, d\mu, & y>x, \\
                                     0,                                     & y=x, \\
                                     -\int_{[y,x)} f\, d\mu, & y< x, 
                                    \end{cases} 
\end{align}
 rendering the integral left-continuous as a function of $y$. 
 If the function $f$ is locally absolutely continuous on $I$ and $g$ denotes a left-continuous distribution function of the measure $\mu$, then we have the integration by parts formula 
\begin{align}\label{eqnPI}
  \int_{x}^y  f\, d\mu = \left. g f\right|_x^y - \int_{x}^y g(s) f'(s) ds,
\end{align}
 which will be used frequently throughout this article.

 \section{The phase space}\label{secSpace}

We are first going to elaborate on the nature of the {\em phase space} $\CHdom$, which we defined as the set of all pairs $(u,\mu)$ such that $u$ is a real-valued function in $H^1_{\loc}(\R)$ and $\mu$ is a positive Borel measure on $\R$ satisfying~\eqref{eqnmuac} and the asymptotic growth restrictions~\eqref{eqnMdef-} and~\eqref{eqnMdef+}, where the positive Borel measure $\dip$ is given by~\eqref{eqndipdef}.
With each pair $(u,\mu)$ in $\CHdom$ we shall associate another auxiliary pair $(\wt{\Wr},\wt{\dip})$, whose meaning and relevance will become clearer in the next section: 
 The function $\wt\Wr$ is defined almost everywhere on $(0,\infty)$ by  
 \begin{align}\label{eqnDefa}
 \wt{\Wr}(x) = -\frac{u(\log x) + u'(\log x)}{x}  
 \end{align}
 and the positive Borel measure $\wt{\dip}$ on $(0,\infty)$ is defined by 
 \begin{align}\label{eqnDefbeta}
  \wt{\dip}(B)   = \int_{\log(B)} \E^{- x} d\dip(x)=  \int_B \frac{1}{x}\, d\dip\circ\log(x)
 \end{align}
 for every Borel set $B\subseteq(0,\infty)$. 

\begin{lemma}\label{lem:propwt}
 If the pair $(u,\mu)$ belongs to $\CHdom$, then the corresponding function $\wt{\Wr}$ belongs to $L^2(0,\infty)$ and is real-valued almost everywhere, the corresponding measure $\wt{\dip}$ is finite and 
 \begin{align}\label{eqnMdefTilde}
  \limsup_{x\rightarrow\infty}\,  x \int_{x}^\infty  \wt{\Wr}(s)^2ds  + x \int_{x}^{\infty} d\wt{\dip} < \infty.
\end{align}
\end{lemma}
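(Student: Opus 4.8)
The plan is to treat all three assertions as consequences of a single change of variables $s=\log x$ (equivalently $x=\E^{s}$), under which the defining relations~\eqref{eqnDefa} and~\eqref{eqnDefbeta} are engineered precisely to turn the growth restrictions~\eqref{eqnMdef-} and~\eqref{eqnMdef+} on $(u,\mu)$ into the corresponding statements about $(\wt{\Wr},\wt{\dip})$. Throughout, I would repeatedly invoke the elementary bound $(u+u')^2\le 2(u^2+(u')^2)$ in order to pass from the quantity $u^2+(u')^2$ appearing in~\eqref{eqnmuac} and~\eqref{eqnMdef-} to the quantity $(u+u')^2$ appearing in~\eqref{eqnMdef+} and in the definition of $\wt{\Wr}$.

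For the first assertion, I would compute $\int_0^\infty \wt{\Wr}(x)^2\,dx$ directly from~\eqref{eqnDefa}: substituting $x=\E^{s}$ gives
\begin{align*}
 \int_0^\infty \wt{\Wr}(x)^2\,dx = \int_{-\infty}^\infty \E^{-s}\,(u(s)+u'(s))^2\,ds.
\end{align*}
The integral over $(-\infty,0)$ is then finite by~\eqref{eqnMdef-} together with the elementary bound above, while the integral over $(0,\infty)$ is split into a bounded piece $[0,x_0]$, finite because $u\in H^1_{\loc}(\R)$, and a tail $[x_0,\infty)$, finite because~\eqref{eqnMdef+} forces $\int_x^\infty \E^{-s}(u+u')^2\,ds$ to decay like $\E^{-x}$ for large $x$. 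Real-valuedness of $\wt{\Wr}$ almost everywhere is immediate, since $u$ is real-valued and hence so is $u'$ wherever it exists.

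The second assertion is handled in the same spirit: from~\eqref{eqnDefbeta} together with $\log((0,\infty))=\R$ one obtains $\wt{\dip}((0,\infty))=\int_{-\infty}^\infty \E^{-s}\,d\dip(s)$, whose part over $(-\infty,0)$ is finite by~\eqref{eqnMdef-} and whose part over $[0,\infty)$ is again split into a bounded piece (finite because $\dip$ is a locally finite positive Borel measure) and a tail controlled by~\eqref{eqnMdef+}.

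Finally, for~\eqref{eqnMdefTilde} I would carry out the same substitution inside the tail integrals rather than over the whole line. Writing $X=\log x$, the change of variables yields
\begin{align*}
 x\int_x^\infty \wt{\Wr}(y)^2\,dy + x\int_x^\infty d\wt{\dip} = \E^{X}\biggl(\int_X^\infty \E^{-s}(u(s)+u'(s))^2\,ds + \int_X^\infty \E^{-s}\,d\dip(s)\biggr),
\end{align*}
which is \emph{exactly} the expression whose $\limsup$ as $X\to\infty$ is assumed finite in~\eqref{eqnMdef+}; taking $\limsup$ as $x\to\infty$ (so $X\to\infty$) therefore yields~\eqref{eqnMdefTilde} at once. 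The only points requiring genuine care—and the closest thing here to a real obstacle—are the bookkeeping in pushing the measure $\dip$ forward under $\log$ correctly in~\eqref{eqnDefbeta} (respecting the left-continuous integral convention) and verifying finiteness of the bounded, local portions of the integrals, for which the hypotheses $u\in H^1_{\loc}(\R)$ and local finiteness of $\dip$ are precisely what is needed; the substantive content of the growth bound is carried entirely by~\eqref{eqnMdef+}.
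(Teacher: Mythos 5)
Your proof is correct and follows essentially the same route as the paper's: both reduce all three assertions to the change of variables $s=\log x$, using~\eqref{eqnMdef-} (together with the bound $(u+u')^2\le 2\bigl(u^2+(u')^2\bigr)$) to control $\wt{\Wr}$ and $\wt{\dip}$ near zero, local regularity of $(u,\mu)$ for the compact pieces, and~\eqref{eqnMdef+} for the tails, which transforms verbatim into~\eqref{eqnMdefTilde}. The only difference is that you spell out explicitly the bookkeeping that the paper compresses into a few lines.
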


\begin{proof}
 We first note that $\wt\Wr$ is locally square integrable because $u$ belongs to $H^1_{\loc}(\R)$. 
 The growth restriction in~\eqref{eqnMdef-} implies that the function $\wt{\Wr}$ is square integrable near zero and that the measure $\wt{\dip}$ is finite near zero. 
 Moreover, upon performing a simple change of variables, condition~\eqref{eqnMdef+} is seen to imply~\eqref{eqnMdefTilde}, which also shows that $\wt\Wr$ is square integrable near $\infty$ and that $\wt\dip$ is finite near $\infty$. 
 Finally, since $u$ is real-valued, the function $\wt\Wr$ is real-valued almost everywhere.
\end{proof}

Let us define $\String_0$ as the set of all pairs $(\wt{\Wr},\wt{\dip})$ such that $\wt\Wr$ is an almost everywhere real-valued function in $L^2(0,\infty)$, $\wt\dip$ is a finite positive Borel measure on $[0,\infty)$ with no point mass at zero and the asymptotic condition~\eqref{eqnMdefTilde} holds. 
In view of Lemma~\ref{lem:propwt}, it follows that $(u,\mu)\mapsto(\wt\Wr,\wt\dip)$ defined by~\eqref{eqnDefa} and~\eqref{eqnDefbeta} maps the phase space $\CHdom$ to $\String_0$, where we extend the corresponding measure $\wt\dip$ defined in~\eqref{eqnDefbeta} to a finite positive Borel measure on $[0,\infty)$ by setting $\wt\dip(\{0\})=0$. 

\begin{lemma}\label{lemumuwdip}
  The mapping $(u,\mu)\mapsto(\wt\Wr,\wt\dip)$ is a bijection between $\CHdom$ and $\String_0$.
\end{lemma}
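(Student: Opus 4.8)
The plan is to prove that the map $(u,\mu)\mapsto(\wt\Wr,\wt\dip)$ defined by~\eqref{eqnDefa} and~\eqref{eqnDefbeta} is a bijection from $\CHdom$ onto $\String_0$ by constructing an explicit inverse. Since Lemma~\ref{lem:propwt} already guarantees that the map is well-defined as a map into $\String_0$, the task reduces to showing it is both injective and surjective, which I would do simultaneously by producing, for each pair $(\wt\Wr,\wt\dip)\in\String_0$, a unique preimage $(u,\mu)\in\CHdom$. The change of variables $x=\E^{t}$ (equivalently $t=\log x$) is the natural bijection between $(0,\infty)$ and $\R$ underlying the definitions, so the entire argument amounts to inverting the two defining relations pointwise and then checking that the reconstructed data lands in $\CHdom$.

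First I would invert~\eqref{eqnDefbeta} to recover $\dip$: the formula $\wt\dip(B)=\int_B \frac{1}{x}\,d\dip\circ\log(x)$ shows that $\dip$ is the pushforward of the measure $x\,\wt\dip(dx)$ under the map $x\mapsto\log x$, so setting $\dip(B)=\int_{\exp(B)}x\,d\wt\dip(x)$ for Borel $B\subseteq\R$ recovers $\dip$ uniquely, and the condition $\wt\dip(\{0\})=0$ is exactly what is needed to ensure $\dip$ is a genuine positive Borel measure on all of $\R$ with no lost mass. Next I would invert~\eqref{eqnDefa} to recover $u$. Writing $v=u+u'$, relation~\eqref{eqnDefa} gives $v(\log x)=-x\,\wt\Wr(x)$, hence $v(t)=-\E^{t}\wt\Wr(\E^{t})$ for $t\in\R$. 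The function $u$ is then obtained by solving the linear first-order ODE $u'+u=v$. The decay condition~\eqref{eqnMdef-} forces the boundary behaviour of $u$ at $-\infty$ and thus singles out the variation-of-constants solution $u(t)=\E^{-t}\int_{-\infty}^{t}\E^{s}v(s)\,ds$, which I would verify is the unique solution in $H^1_{\loc}(\R)$ compatible with the growth restrictions. This establishes injectivity, since both $u$ and $\dip$ (and hence $\mu$ via~\eqref{eqndipdef}) are uniquely determined by $(\wt\Wr,\wt\dip)$.

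For surjectivity I would take an arbitrary $(\wt\Wr,\wt\dip)\in\String_0$, define $u$ and $\dip$ by the inversion formulas above, set $\mu=\dip+(u^2+u'^2)\,dx$, and verify membership in $\CHdom$. This means checking: (i) $u\in H^1_{\loc}(\R)$, which follows from $v\in L^2_{\loc}$ together with the ODE; (ii) the measure $\mu$ is positive and satisfies~\eqref{eqnmuac}, which holds by construction since $\dip$ is positive and~\eqref{eqndipdef} is built in; (iii) the decay condition~\eqref{eqnMdef-} at $-\infty$, which corresponds under the change of variables to square integrability of $\wt\Wr$ near zero and finiteness of $\wt\dip$ near zero; and (iv) the growth condition~\eqref{eqnMdef+} at $+\infty$, which is exactly~\eqref{eqnMdefTilde} transported back through $t=\log x$. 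The estimates in (iii) and (iv) are essentially the reverse of the computations already performed in the proof of Lemma~\ref{lem:propwt}, so I would reuse those change-of-variables identities rather than redo them.

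The main obstacle I anticipate is verifying that the reconstructed $u$ genuinely satisfies the decay condition~\eqref{eqnMdef-} at $-\infty$, because that condition constrains $u$ and $u'$ separately through the weight $\E^{-s}$, whereas the string data only directly controls the combination $v=u+u'$. The point is that recovering $u$ from $v$ involves an integration, and one must show that passing from $v$ to $u$ does not destroy the required weighted integrability; concretely, one needs that $\E^{\nicefrac{-t}{2}}u(t)\in H^1$ near $-\infty$ follows from the corresponding statement for $v$. I expect this to come down to a Hardy-type inequality controlling $\int_{-\infty}^{0}\E^{-t}u(t)^2\,dt$ in terms of $\int_{-\infty}^{0}\E^{-t}v(t)^2\,dt$, applied to the explicit representation $u(t)=\E^{-t}\int_{-\infty}^{t}\E^{s}v(s)\,ds$; the weight $\E^{-t}$ and the exponential kernel are compatible precisely because of the factor $\nicefrac{1}{2}$ in the remark following the definition. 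Once this weighted estimate is in hand, the equivalence of~\eqref{eqnMdef-} for $u$ with square integrability of $\wt\Wr$ near zero (already noted in the parenthetical remark after the definition of $\CHdom$) closes the argument.
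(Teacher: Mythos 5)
Your proposal is correct and follows essentially the same route as the paper: your variation-of-constants formula $u(t)=\E^{-t}\int_{-\infty}^{t}\E^{s}v(s)\,ds$ is, after the substitution $x=\E^{t}$, exactly the paper's reconstruction $u(\log x)=-\frac{1}{x}\int_0^x\wt\Wr(s)s\,ds$ in~\eqref{eqnuitofa}, and your recovery of $\dip$ is the same pushforward inversion of~\eqref{eqnDefbeta}. The Hardy-type inequality you correctly identify as the main obstacle is precisely the step the paper handles by invoking boundedness of the Hardy operator on $L^2(0,\infty)$ to control $\int_\R \E^{-s}u(s)^2\,ds$, so your plan closes the same way.
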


\begin{proof}
  For an arbitrary pair $(\wt\Wr,\wt\dip)$ in $\String_0$, the function $u$ defined by 
  \begin{align}\label{eqnuitofa}
   u(\log x) = - \frac{1}{x} \int_0^x  \wt{\Wr}(s) s\, ds 
  \end{align} 
  is real-valued, belongs to $H^1_{\loc}(\R)$ and satisfies 
  \begin{align*}
    \wt\Wr(x) =  -\frac{u(\log x) + u'(\log x)}{x}
  \end{align*} 
  for almost all $x>0$. 
  Furthermore, we define the positive Borel measure $\mu$ on $\R$ by~\eqref{eqndipdef}, where $\dip$ is the positive Borel measure on $\R$ such that~\eqref{eqnDefbeta} holds. 
  The pair $(u,\mu)$ then clearly satisfies~\eqref{eqnmuac}.
  Since the function $\wt\Wr$ is square integrable near zero and the measure $\wt\dip$ is finite near zero, it follows that
  \begin{align*}
    \int_{-\infty}^0 \E^{-s}(u(s)+u'(s))^2ds + \int_{-\infty}^0 \E^{-s}d\dip(s) < \infty.
  \end{align*}  
  From this we conclude that~\eqref{eqnMdef-} holds as well because 
     \begin{align*}
   \int_\R \E^{- s} u(s)^2 ds  = \int_0^\infty \frac{u(\log x)^2}{x^2}dx < \infty,
  \end{align*}  
  where finiteness of the last integral follows from boundedness of the Hardy operator on $L^2(0,\infty)$; see \cite[Theorem~319]{HLP}, \cite[Theorem~2.2.1]{edev}, \cite[Lemma~A.1]{SingMB} for example.
 As a change of variables translates condition~\eqref{eqnMdefTilde}  into property~\eqref{eqnMdef+}, we see that the pair $(u,\mu)$ indeed belongs to $\CHdom$.   
  It remains to note that the mapping $(\wt\Wr,\wt\dip)\mapsto(u,\mu)$ defined in this way is the inverse of the mapping in the claim.
 \end{proof}

\begin{lemma}\label{lem:umuEST}
 If the pair $(u,\mu)$ belongs to $\CHdom$, then the function $x\mapsto\E^{-\frac{x}{2}}u(x)$ belongs to $H^1(\R)$. 
 In particular, one has  
 \begin{align}\label{eq:uatinfty}
 \lim_{|x|\rightarrow\infty} \E^{-x}u(x)^2 = 0
 \end{align}
 and for all $x\in\R$ the bounds 
\begin{align}\label{eq:supU-EST}
 \frac{3}{2}\E^{-x}u(x)^2 \le \int_{-\infty}^x \E^{-s}d\mu(s)\le \int_\R \E^{-s}d\mu(s) < \infty.
  \end{align}
\end{lemma}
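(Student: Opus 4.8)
The plan is to reduce everything to the two finiteness statements $\int_\R \E^{-s}u(s)^2\,ds<\infty$ and $\int_\R \E^{-s}u'(s)^2\,ds<\infty$, from which the remaining assertions follow quickly. Indeed, writing $g(x)=\E^{-x/2}u(x)$ we have $g'(x)=\E^{-x/2}(u'(x)-\tfrac12 u(x))$, so these two bounds give $\int_\R g^2=\int_\R \E^{-s}u^2<\infty$ and, via $(u'-\tfrac12 u)^2\le 2(u')^2+\tfrac12 u^2$, also $\int_\R (g')^2<\infty$; hence $g\in H^1(\R)$. The Sobolev embedding $H^1(\R)\hookrightarrow C_0(\R)$ then forces $g(x)\to0$ as $|x|\to\infty$, which is precisely \eqref{eq:uatinfty} because $\E^{-x}u(x)^2=g(x)^2$.

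The crux is establishing $\int_\R \E^{-s}u(s)^2\,ds<\infty$, and this is where I expect the main obstacle to lie: Lemma~\ref{lem:propwt} only tells us that $\int_\R \E^{-s}(u+u')^2\,ds=\|\wt\Wr\|_{L^2(0,\infty)}^2<\infty$, and the mild condition \eqref{eqnMdef+} controls $u+u'$ but not $u$ and $u'$ separately, so the quadratic cross term cannot simply be discarded. To decouple, I would read the trivial identity $u'+u=(u+u')$ as a first-order linear ODE and single out the decaying solution $u(s)=\E^{-s}\int_{-\infty}^s \E^{t}(u+u')(t)\,dt$; the homogeneous contribution $C\E^{-s}$ is ruled out by \eqref{eqnMdef-}, since $\E^{-s/2}\cdot C\E^{-s}=C\E^{-3s/2}$ fails to be square integrable near $-\infty$ unless $C=0$. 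Setting $H(s)=\E^{-s/2}(u+u')(s)\in L^2(\R)$, this representation reads $\E^{-s/2}u(s)=(k\ast H)(s)$ with $k(r)=\E^{-3r/2}\mathbbm{1}_{(0,\infty)}(r)$, so Young's inequality gives $\int_\R\E^{-s}u^2\le\|k\|_{L^1}^2\int_\R\E^{-s}(u+u')^2<\infty$. (Equivalently, this is exactly the Hardy-type estimate already invoked in the proof of Lemma~\ref{lemumuwdip} via the representation \eqref{eqnuitofa}.) Once $\int_\R\E^{-s}u^2<\infty$ is known, the bound $(u')^2\le2(u+u')^2+2u^2$ yields $\int_\R\E^{-s}(u')^2<\infty$ as well.

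For the upper bound in \eqref{eq:supU-EST} I would split $\int_\R\E^{-s}\,d\mu=\int_\R\E^{-s}(u^2+(u')^2)\,ds+\int_\R\E^{-s}\,d\dip$ using \eqref{eqndipdef}; the first term is finite by the previous paragraph, while the second is finite near $-\infty$ by \eqref{eqnMdef-} and near $+\infty$ by \eqref{eqnMdef+} (together with local finiteness of $\dip$). Since the integrand is nonnegative, monotonicity then gives $\int_{-\infty}^x\E^{-s}\,d\mu\le\int_\R\E^{-s}\,d\mu<\infty$ for every $x\in\R$.

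Finally, for the lower bound I would use that $\E^{-s}u(s)^2$ is locally absolutely continuous with derivative $\E^{-s}(2uu'-u^2)$ and that $\E^{-x}u(x)^2\to0$ as $x\to-\infty$ (from \eqref{eq:uatinfty}), so that $\E^{-x}u(x)^2=\int_{-\infty}^x\E^{-s}(2uu'-u^2)\,ds$. The claim then reduces to the pointwise inequality $\tfrac32(2uu'-u^2)\le u^2+(u')^2$, which is nothing but $(u'-\tfrac32 u)^2+\tfrac14u^2\ge0$ rearranged; integrating it against $\E^{-s}\,ds$ and discarding the nonnegative $\dip$-contribution gives $\tfrac32\E^{-x}u(x)^2\le\int_{-\infty}^x\E^{-s}(u^2+(u')^2)\,ds\le\int_{-\infty}^x\E^{-s}\,d\mu$, as required.
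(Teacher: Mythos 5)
Your proposal is correct and takes essentially the same route as the paper: the paper's proof likewise rests on the square integrability of $\E^{-\frac{x}{2}}u(x)$ and $\E^{-\frac{x}{2}}(u(x)+u'(x))$ (obtained there by invoking the Hardy-operator bound already used in the proof of Lemma~\ref{lemumuwdip}, which your ODE-representation/Young's-inequality argument simply re-derives, as you yourself note), deduces membership in $H^1(\R)$ from the same derivative identity, and obtains \eqref{eq:supU-EST} from \eqref{eqnmuac} via $|f(x)|\leq\|f\|_{H^1(-\infty,x)}$ plus an integration by parts that eliminates the cross term --- algebraically equivalent to your FTC-plus-completed-square computation and yielding the same constant $\frac{3}{2}$. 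The only other cosmetic difference is that the paper checks finiteness of $\int_\R\E^{-s}d\mu(s)$ through an integration-by-parts identity on $(0,\infty)$ relating $u^2+(u')^2$ to $(u+u')^2$, whereas you bound $\int_\R\E^{-s}u'(s)^2ds$ directly from $(u')^2\le 2(u+u')^2+2u^2$; these are interchangeable.
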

 
 \begin{proof}
Since the functions $\E^{-\frac{x}{2}}u(x)$ and $\E^{-\frac{x}{2}}(u(x)+u'(x))$ are square integrable for any pair $(u,\mu)$ in $\CHdom$, the first claim obviously follows from 
\[
  \frac{d}{dx}\E^{-\frac{x}{2}}u(x) = - \frac{1}{2}\E^{-\frac{x}{2}}u(x) + \E^{-\frac{x}{2}}u'(x) = - \frac{3}{2}\E^{-\frac{x}{2}}u(x) + \E^{-\frac{x}{2}}(u(x)+u'(x)). 
\]  
In order to prove the bounds in~\eqref{eq:supU-EST}, recall that $|f(x)| \leq \|f\|_{H^1(-\infty,x)}$ holds for all functions $f\in H^1(\R)$ and every $x\in\R$.
 From this we get after integrating by parts and using~\eqref{eq:uatinfty} that
  \begin{align*}
  \E^{-x}u(x)^2 & \le \int_{-\infty}^x \E^{-s}u(s)^2ds + \int_{-\infty}^x \E^{-s} \left(u'(s) - \frac{1}{2}u(s)\right)^2ds\\
  & =  \int_{-\infty}^x \E^{-s} \biggl(\frac{3}{4}u(s)^2 + u'(s)^2\biggr)ds - \frac{1}{2}\E^{-x}u(x)^2\\
  & \le \int_{-\infty}^x \E^{-s}d\mu(s) - \frac{1}{2}\E^{-x}u(x)^2.
  \end{align*}
  It remains to notice that the last integral in~\eqref{eq:supU-EST} is finite due to~\eqref{eqnMdef-}, \eqref{eqnMdef+} and
  \[
  \int_0^\infty \E^{-s}(u(s)^2+u'(s)^2)ds =  u(0)^2 - \int_0^\infty \E^{-s}u(s)^2ds + \int_0^\infty \E^{-s}(u(s)+u'(s))^2ds.  \qedhere
  \] 
 \end{proof}

\begin{remark}
Similar calculations as in the proof of Lemma~\ref{lem:umuEST}, using the $H^1$ norm on the semi-axis $(x,\infty)$ instead of $(-\infty,x)$, yield the analogous bound 
\begin{align}
 \E^{-x}u(x)^2 \le \int_x^{\infty} \E^{-s} \biggl(\frac{3}{2}u(s)^2 + 2u'(s)^2\biggr)ds.
  \end{align}
\end{remark}

For the sake of simplicity, instead of introducing a proper topology on the phase space $\CHdom$, we will only define a notion of convergence. 
It will follow from Proposition~\ref{propCont} in Section~\ref{secIST} that this mode of convergence is indeed induced by a metric. 

\begin{definition}\label{def:topolD}
We say that a sequence of pairs $(u_k,\mu_k)$ converges to $(u,\mu)$ in $\CHdom$ if  the functions $u_k$ converge to $u$ pointwise on $\R$ and  
\begin{align}\label{eqnContMU}
     \int_{-\infty}^x \E^{- s}d\mu_k(s)  \rightarrow \int_{-\infty}^x  \E^{- s}d\mu(s) 
\end{align}   
for almost every $x\in\R$.
 \end{definition}

Let us first compare this mode of convergence with more standard ones.

\begin{lemma}\label{lem:topD}
Let $(u_k,\mu_k)$ be a sequence of pairs that converge to $(u,\mu)$ in $\CHdom$. 
 \begin{enumerate}[label=(\roman*), ref=(\roman*), leftmargin=*, widest=iii]
    \item\label{itmtopDi} The functions $u_k$ converge to $u$ locally uniformly on $\R$. 
     \item\label{itmtopDii} For all $x\in\R$ one has 
\begin{align}\label{eqnContUint01}
     \int_{-\infty}^x u_k(s) ds & \rightarrow \int_{-\infty}^x u(s) ds, \\
   \label{eqnContUint01b}   \int_{-\infty}^x u_k(s)^2 ds & \rightarrow \int_{-\infty}^x u(s)^2 ds, \\
 \label{eqnContUint02}
 \int_{-\infty}^x \E^{-s}u_k(s)^2 ds  & \rightarrow \int_{-\infty}^x \E^{-s}u(s)^2 ds.
\end{align} 
 \item\label{itmtopDiii} The derivatives $u_k'$ converge to $u'$ weakly in $L^2_{\loc}(\R)$.
\item\label{itmtopDiv} For every continuous function $h\in C_{\cc}(\R)$ with compact support one has 
 \begin{align}
 \int_\R h(s)d\mu_k(s) \rightarrow \int_\R h(s)d\mu(s).
 \end{align}
 \end{enumerate}
\end{lemma}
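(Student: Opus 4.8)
The plan is to prove the four assertions essentially in the given order, extracting everything from the two hypotheses in Definition~\ref{def:topolD}: pointwise convergence of the $u_k$ together with the convergence~\eqref{eqnContMU} of the truncated weighted masses $\int_{-\infty}^x \E^{-s}d\mu_k(s)$. The guiding principle throughout is that the defining inequality~\eqref{eqnmuac} lets me dominate $u_k$ and $u_k'$ in $L^2$-type norms by the measures $\mu_k$, so that control of the weighted masses yields equi-integrability and uniform bounds that turn the bare pointwise/measure convergence into the stronger conclusions claimed.

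First I would establish~\ref{itmtopDi}. The key is a uniform local $H^1$ bound: on a fixed compact interval $[a,b]$, the bound~\eqref{eqnmuac} gives $\int_a^b (u_k^2 + (u_k')^2)\,dx \le \E^{b}\int_a^b \E^{-s}d\mu_k(s)$, and the right-hand side is bounded uniformly in $k$ by~\eqref{eqnContMU} (the differences of truncated masses at $a$ and $b$ converge, hence are bounded). Thus the $u_k$ are uniformly bounded in $H^1(a,b)$, hence equicontinuous there (via $|u_k(x)-u_k(y)|\le |x-y|^{1/2}\|u_k'\|_{L^2(a,b)}$). Combining equicontinuity with pointwise convergence gives locally uniform convergence by a standard Arzel\`a--Ascoli argument. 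For~\ref{itmtopDiii}, the same uniform $H^1_{\loc}$ bound shows the $u_k'$ are bounded in $L^2_{\loc}$; any weak-$L^2_{\loc}$ limit point must coincide with $u'$ because locally uniform convergence of $u_k$ identifies the distributional derivative, so the whole sequence converges weakly to $u'$.

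For~\ref{itmtopDii}, the first two limits~\eqref{eqnContUint01} and~\eqref{eqnContUint01b} follow from locally uniform convergence on $(-\infty,x]$ once I control the tail at $-\infty$; here condition~\eqref{eqnMdef-}, uniformly available through the bounded weighted masses, provides the domination needed to pass to the limit in the improper integral. The weighted limit~\eqref{eqnContUint02} is the most delicate of the three: I would write $\int_{-\infty}^x \E^{-s}u_k(s)^2\,ds$ and split it at a large negative cutoff $N$; on $[N,x]$ locally uniform convergence handles the finite piece, while on $(-\infty,N]$ the inequality $\int_{-\infty}^N \E^{-s}u_k^2\,ds \le \int_{-\infty}^N \E^{-s}d\mu_k$ together with~\eqref{eqnContMU} (and the uniform finiteness it forces) makes the tail uniformly small. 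This tail estimate at $-\infty$ is where I expect the main obstacle to lie, since the pointwise convergence and the measure convergence must be coordinated carefully to get a single $N$ that works for all large $k$; the bound~\eqref{eqnmuac} is precisely the tool that converts mass control into $u^2$-control and thereby resolves it.

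Finally, for~\ref{itmtopDiv} I would reduce testing against arbitrary $h\in C_{\cc}(\R)$ to the convergence of the weighted distribution functions. Writing $F_k(x)=\int_{-\infty}^x \E^{-s}d\mu_k(s)$, which converges almost everywhere to $F(x)$ by~\eqref{eqnContMU}, I have $d\mu_k = \E^{s}\,dF_k$, so $\int_\R h\,d\mu_k = \int_\R h(s)\E^{s}\,dF_k(s)$. Since $h$ has compact support, an integration by parts (using~\eqref{eqnPI}, with $F_k$ playing the role of the left-continuous distribution function) turns this into an integral of $F_k$ against $(h\E^{\redot})'$ over a fixed compact set; there the almost-everywhere convergence $F_k\to F$ combined with the uniform local bound on $F_k$ (finite by~\eqref{eqnMdefTilde}-type control and monotonicity) permits passage to the limit by dominated convergence, yielding $\int_\R h\,d\mu_k \to \int_\R h\,d\mu$. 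This completes all four parts.
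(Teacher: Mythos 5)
Your proposal is correct and follows essentially the same route as the paper's proof: the uniform local bounds extracted from~\eqref{eqnContMU} via~\eqref{eqnmuac} give the locally uniform and weak $H^1_{\loc}$ convergence in~\ref{itmtopDi} and~\ref{itmtopDiii}, the dominated-convergence/uniform-tail argument (with the tail at $-\infty$ made uniformly small by combining~\eqref{eqnContMU} with the finiteness from~\eqref{eqnMdef-}) handles~\ref{itmtopDii} exactly as the paper does by citing a Vitali-type theorem, and~\ref{itmtopDiv} is the standard passage from a.e.\ convergence of distribution functions to vague convergence, which the paper simply asserts. The only point to tidy is that the integration by parts~\eqref{eqnPI} in your step for~\ref{itmtopDiv} requires the test function $s\mapsto h(s)\E^{s}$ to be locally absolutely continuous, so take $h\in C^1_{\cc}(\R)$ first and then pass to general $h\in C_{\cc}(\R)$ by uniform approximation, using that the masses $\mu_k(K)$ are uniformly bounded on compact sets.
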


\begin{proof}
 We first observe that the convergence in~\eqref{eqnContMU} entails that  
\begin{align*}  
  \sup_{k\in\N}\int_{-\infty}^x  \E^{- s}d\mu_k(s)  <\infty
  \end{align*}
  for all $x\in\R$. 
  In particular, this guarantees that the $H^1(I)$ norms of the functions $u_k$ are uniformly bounded for every compact interval $I$.
  Together with pointwise convergence of the functions $u_k$, this implies that the functions $u_k$ converge weakly in $H^1_{\loc}(\R)$, which proves~\ref{itmtopDi} and~\ref{itmtopDiii}.
  For a given $x\in\R$, the convergences in~\eqref{eqnContUint01} and~\eqref{eqnContUint01b} follow from dominated convergence because~\eqref{eq:supU-EST} in Lemma~\ref{lem:umuEST} gives the uniform bound 
\begin{align*}
u_k(s)^2 \le \E^s \sup_{l\in \N}\int_{-\infty}^x \E^{- r} d\mu_l(r)
\end{align*}
for all $s<x$ and $k\in\N$.
In order to prove the convergence in~\eqref{eqnContUint02}, we may use~\cite[Theorem~30.8]{ba01} because taking into account~\eqref{eqnContMU} one sees that for any $\varepsilon>0$ there is an $x_0 \in\R$ such that  
    \begin{align*}
   \int_{-\infty}^{x_0} \E^{-s} u_k(s)^2ds \leq \int_{-\infty}^{x_0} \E^{- s}d\mu_k(s) < \varepsilon
    \end{align*}
    for all $k\in\N$. 
   The remaining claim in~\ref{itmtopDiv} simply follows from pointwise convergence almost everywhere of the distribution function  in~\eqref{eqnContMU}.
\end{proof}

\begin{remark}
A few remarks are in order.
 \begin{enumerate}[label=(\roman*), ref=(\roman*), leftmargin=*, widest=iii]
    \item
Weak $L^2_{\loc}(\R)$ convergence in Lemma~\ref{lem:topD}~\ref{itmtopDiii} can not be replaced by strong $L^2_{\loc}(\R)$ convergence since this would imply that the functions $u_k$ converge to $u$ in $H^1_{\loc}(\R)$. 
 However, this is not the case as the crucial example of a peakon-antipeakon collision shows (see~\cite[Section~6]{brco07}).
\item Notice that when $x$ is taken to $\infty$ in~\eqref{eqnContMU} and~\eqref{eqnContUint02}, one only ends up with the inequality 
\begin{align}
\liminf_{k\to \infty}\int_{\R} \E^{- s}u_k(s)^2ds + \int_\R \E^{- s}d\mu_k(s) \ge \int_{\R} \E^{- s}u(s)^2ds + \int_{\R} \E^{- s}d\mu(s).
\end{align}
It turns out that under a mild additional assumption on the sequence $(u_k,\mu_k)$ it is possible to replace inequality by equality here (see Lemma~\ref{lem:convonR}). 
\end{enumerate}
\end{remark}

Using the bijection between $\CHdom$ and $\String_0$, we can also characterize the convergence in $\CHdom$ in terms of the corresponding pairs in $\String_0$ in a rather transparent way.

 \begin{lemma}\label{lem:topStr}
A sequence of pairs $(u_k,\mu_k)$ converges to $(u,\mu)$ in $\CHdom$ if and only if the corresponding pairs $(\wt\Wr_k,\wt\dip_k)$ and $(\wt\Wr,\wt\dip)$ in $\String_0$ satisfy 
  \begin{align}\label{eqnContAr} 
  \int_0^x \wt{\Wr}_{k}(s)ds  \rightarrow \int_0^x \wt{\Wr}(s)ds
    \end{align}
   for all $x>0$ and
   \begin{align} \label{eqnContvs} 
   \int_0^x  \wt{\Wr}_{k}(s)^2 ds + \int_0^x d\wt{\dip}_{k}  & \rightarrow \int_0^x \wt{\Wr}(s)^2 ds + \int_0^x d\wt{\dip}
  \end{align}
  for almost every $x>0$. 
  \end{lemma}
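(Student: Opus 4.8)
The plan is to transport both convergence conditions to the string side via the bijection of Lemma~\ref{lemumuwdip} and the change of variables $t=\E^s$, so that \eqref{eqnContMU} and the pointwise convergence of the $u_k$ translate into \eqref{eqnContAr} and \eqref{eqnContvs}. The bridge consists of two pointwise identities valid for every pair in $\CHdom$. First, from \eqref{eqnDefa} and the substitution one finds
\[
\int_0^{\E^x}\wt\Wr(t)\,dt = -\int_{-\infty}^x\bigl(u(s)+u'(s)\bigr)\,ds = -\int_{-\infty}^x u(s)\,ds - u(x),
\]
where the second equality uses $u(s)\to 0$ as $s\to-\infty$, a consequence of \eqref{eq:uatinfty}. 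Second, the substitution turns $\int_0^{\E^x}\wt\Wr^2$ into $\int_{-\infty}^x\E^{-s}(u+u')^2\,ds$; expanding the square, integrating the cross term $\E^{-s}(u^2)'$ by parts (again via \eqref{eq:uatinfty}), and adding $\int_0^{\E^x}d\wt\dip=\int_{-\infty}^x\E^{-s}d\dip$ (which is immediate from \eqref{eqnDefbeta}) yields the master identity
\[
\int_0^{\E^x}\wt\Wr(t)^2\,dt + \int_0^{\E^x}d\wt\dip = \int_{-\infty}^x\E^{-s}\,d\mu(s) + \E^{-x}u(x)^2 + \int_{-\infty}^x\E^{-s}u(s)^2\,ds.
\]
All three summands on the right are nonnegative, an observation that will be essential in the harder direction.

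For the forward implication, suppose $(u_k,\mu_k)\to(u,\mu)$ in $\CHdom$. Applying the first identity to each pair and invoking the pointwise convergence of the $u_k$ together with \eqref{eqnContUint01} immediately gives \eqref{eqnContAr} for every $x>0$. Applying the master identity and using \eqref{eqnContMU}, the pointwise convergence of the $u_k$, and \eqref{eqnContUint02} shows that each term on the right converges, whence \eqref{eqnContvs} holds for almost every $x>0$. This direction is routine once the identities are in hand.

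The reverse implication is where the work lies. Assume \eqref{eqnContAr} and \eqref{eqnContvs}. Since $\wt\dip_k\ge 0$, condition \eqref{eqnContvs} forces $\sup_k\int_0^x\wt\Wr_k^2<\infty$ for every $x>0$. Writing $A_k(x)=\int_0^x\wt\Wr_k$ and using \eqref{eqnuitofa} in the integrated-by-parts form $u_k(\log X)=-A_k(X)+X^{-1}\int_0^X A_k(s)\,ds$, the Cauchy--Schwarz bound $|A_k(s)|\le C(X)^{1/2}s^{1/2}$ (with $C(X)=\sup_k\int_0^X\wt\Wr_k^2$) supplies an integrable majorant on $[0,X]$, so dominated convergence upgrades \eqref{eqnContAr} to $\int_0^X\wt\Wr_k(s)s\,ds\to\int_0^X\wt\Wr(s)s\,ds$ and hence to pointwise convergence $u_k\to u$ on $\R$. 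It then remains to deduce \eqref{eqnContMU}, and here the main obstacle is to establish $\int_{-\infty}^x\E^{-s}u_k^2\,ds\to\int_{-\infty}^x\E^{-s}u^2\,ds$ without already knowing convergence in $\CHdom$, so that Lemma~\ref{lem:topD} is unavailable. This is resolved by the nonnegativity in the master identity, which gives the uniform tail bound $\int_{-\infty}^{x_0}\E^{-s}u_k^2\,ds\le\int_0^{\E^{x_0}}\wt\Wr_k^2+\int_0^{\E^{x_0}}d\wt\dip_k$; by \eqref{eqnContvs} the right-hand side is uniformly small for small $\E^{x_0}$, and combined with the pointwise convergence of $\E^{-s}u_k^2$ this yields the claim through \cite[Theorem~30.8]{ba01}, exactly as in the proof of \eqref{eqnContUint02}. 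Feeding this convergence, the pointwise convergence of the $u_k$, and \eqref{eqnContvs} back into the master identity solved for $\int_{-\infty}^x\E^{-s}d\mu_k$ then delivers \eqref{eqnContMU} for almost every $x$, completing the equivalence.
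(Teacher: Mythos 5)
Your proof is correct and follows essentially the same route as the paper's: both directions rest on the two change-of-variables identities (your ``master identity'' is exactly the paper's equation~\eqref{eq:changeofnorms}), the reverse direction recovers pointwise convergence of $u_k$ via the integrated-by-parts form of~\eqref{eqnuitofa} with a Cauchy--Schwarz dominating function, and the convergence of $\int_{-\infty}^x\E^{-s}u_k^2\,ds$ is obtained from the uniform small-tail bound and \cite[Theorem~30.8]{ba01}, just as in the paper. The only difference is that you spell out some dominations the paper leaves implicit.
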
  

\begin{proof}
Using the definition~\eqref{eqnDefa} of $\wt\Wr$, we get
\[
\int_0^x \wt{\Wr}(s)ds = -\int_0^x \frac{u(\log s) + u'(\log s)}{s}ds = -u(\log x) - \int_{-\infty}^{\log x}u(s)ds,
\]
which shows that~\eqref{eqnContAr} follows from pointwise convergence of $u_k$ together with~\eqref{eqnContUint01}. 
Furthermore, since one has 
\begin{align}\label{eq:changeofnorms}
\int_0^x \wt{\Wr}(s)^2 ds + \int_0^x d\wt{\dip} = \int_{-\infty}^{\log x}\E^{-s}d\mu(s) + \int_{-\infty}^{\log x}\E^{-s}u(s)^2ds + \frac{u(\log x)^2}{x},
\end{align}
 it follows from~\eqref{eqnContMU}, \eqref{eqnContUint02} and pointwise convergence of $u_k$ that~\eqref{eqnContvs} holds for almost all $x\in\R$.

In order to prove the converse, suppose that~\eqref{eqnContAr} holds for all $x>0$ and that~\eqref{eqnContvs} holds for almost all $x>0$, which clearly implies that 
\begin{align*}  
   \frac{3}{2} \sup_{k\in\N} \E^{-s} u_k(s)^2 \leq \sup_{k\in\N} \int_{-\infty}^{\log x}\E^{-r}d\mu_k(r) \leq \sup_{k\in\N}\int_0^x  \wt{\Wr}_{k}(r)^2 dr + \int_0^x d\wt{\dip}_{k}  <\infty
  \end{align*}
for all $x>0$ and $s\leq\log x$, where we also used~\eqref{eq:supU-EST} and~\eqref{eq:changeofnorms}. 
 Because of~\eqref{eqnuitofa}, we may infer from~\eqref{eqnContAr} that the functions $u_k$ converge to $u$ pointwise on $\R$. 
   Indeed, an integration by parts in~\eqref{eqnuitofa} gives 
\begin{align*}
-xu(\log x) = x\int_0^x \wt{\Wr}(s)\, ds - \int_0^x\int_0^s  \wt{\Wr}(r) dr\, ds
\end{align*}
for all $x>0$ and it remains to apply dominated convergence once again. 
Thus, we are left to show that~\eqref{eqnContMU} holds for almost all $x\in\R$. 
However, in view of~\eqref{eq:changeofnorms}, together with the already established pointwise convergence of the functions $u_k$, it suffices to prove that
\begin{align*}
\int_{-\infty}^{\log x}\E^{-s}u_k(s)^2ds \rightarrow \int_{-\infty}^{\log x}\E^{-s}u(s)^2ds
\end{align*}
for all $x>0$. 
But this again, as in the proof of Lemma~\ref{lem:topD}, can be deduced from~\cite[Theorem~30.8]{ba01} because for any $\varepsilon>0$ there is an $x_0>0$ such that 
\begin{align*}
  \int_{-\infty}^{\log x_0} \E^{-s}u_k(s)^2 ds \leq \int_{-\infty}^{\log x_0} \E^{-s}d\mu_k(s) \leq \int_0^{x_0} \wt\Wr_k(s)^2 ds + \int_0^{x_0} d\wt\dip_k < \varepsilon
\end{align*} 
for all $k\in\N$, which follows from the convergence in~\eqref{eqnContvs}.  
    \end{proof}
 
We conclude this section with a solution of the differential equation~\eqref{eq:PequDef}.

\begin{lemma}\label{lem:uniqP}
If the pair $(u,\mu)$ belongs to $\CHdom$, then the function $P$ defined on $\R$ by 
 \begin{align}\label{eq:P0def}
  P(x) =  \frac{1}{4} \int_\R \E^{-|x-s|} u(s)^2 ds +  \frac{1}{4} \int_\R \E^{-|x-s|} d\mu(s)
 \end{align}
 is the unique distributional solution to differential equation~\eqref{eq:PequDef} satisfying
 \begin{align}\label{eq:PasympInfty}
 P(x) = \oo\bigl(\E^{|x|}\bigr),\qquad |x|\rightarrow\infty.
 \end{align}
 \end{lemma}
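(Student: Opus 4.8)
The plan is to split the statement into three parts: first verify that the convolution formula~\eqref{eq:P0def} is well-defined and satisfies the asymptotic bound~\eqref{eq:PasympInfty}; second, check that it solves the distributional differential equation~\eqref{eq:PequDef}; and third, establish uniqueness among solutions obeying the growth restriction. The key analytic input throughout is the bound~\eqref{eq:supU-EST} from Lemma~\ref{lem:umuEST}, which controls $\E^{-s}u(s)^2$ and, via~\eqref{eqndipdef}, the growth of $\mu$ through the finite integral $\int_\R \E^{-s}\,d\mu(s)<\infty$.

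\textbf{Well-definedness and the asymptotic bound.} First I would show the integrals in~\eqref{eq:P0def} converge for each $x\in\R$. Splitting the kernel $\E^{-|x-s|}$ at $s=x$, the contribution from $s<x$ is bounded by $\E^{-x}\int_{-\infty}^x \E^{s}\,d\mu(s)$, which is controlled since $\int_{-\infty}^0 \E^{-s}\,d\mu<\infty$ handles the tail at $-\infty$ (here $\E^{s}\le \E^{-s}$ for $s\le 0$) and the measure is locally finite; the contribution from $s>x$ is bounded by $\E^{x}\int_x^\infty \E^{-s}\,d\mu(s)$, which is finite by~\eqref{eq:supU-EST} since $\int_\R \E^{-s}\,d\mu<\infty$. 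To extract the little-$o$ asymptotics in~\eqref{eq:PasympInfty}, I would examine each half-line limit separately. As $x\to+\infty$, the dominant term is $\frac14\E^{x}\int_\R \E^{-s}(u(s)^2\,ds + d\mu(s))$-type behaviour, but the factor multiplying $\E^{x}$ is $\int_x^\infty$ pieces plus a decaying tail; more carefully, writing $P(x)=\frac14\E^{-x}\int_{-\infty}^x\E^{s}(\cdots) + \frac14\E^{x}\int_x^\infty \E^{-s}(\cdots)$, the second term is $\E^{x}$ times a quantity tending to zero, hence $\oo(\E^x)$, while the first is $\OO(\E^{-x})$. The symmetric computation works at $-\infty$ using~\eqref{eqnMdef-}.

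\textbf{Solving the equation and uniqueness.} For the differential equation, I would recall that $\frac14\E^{-|\cdot|}$ is (up to the factor) the Green's function for the operator $1-\partial_x^2$, i.e. $(1-\partial_x^2)\frac12\E^{-|x|}=\delta$. A direct distributional computation then gives $(P-P_{xx}) = \frac12(u^2+\mu)$ as required by~\eqref{eq:PequDef}; this is a routine convolution identity and I would present it briefly. For uniqueness, suppose $P_1,P_2$ both solve~\eqref{eq:PequDef} with the growth~\eqref{eq:PasympInfty}. Their difference $v=P_1-P_2$ satisfies $v-v_{xx}=0$ distributionally, so $v$ is a classical solution spanned by $\E^{x}$ and $\E^{-x}$, i.e. $v(x)=a\E^{x}+b\E^{-x}$. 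The constraint $v(x)=\oo(\E^{|x|})$ forces $a=0$ (letting $x\to+\infty$) and $b=0$ (letting $x\to-\infty$), whence $v\equiv 0$.

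\textbf{Expected main obstacle.} The genuinely delicate step is the asymptotic estimate~\eqref{eq:PasympInfty}, since it requires converting the one-sided growth restriction~\eqref{eqnMdef+} --- which only controls $(u+u')^2$ and $\dip$, not $u^2$ directly --- into decay of the relevant tail integrals. The trick is that Lemma~\ref{lem:umuEST} already packages the needed control of $\int_x^\infty \E^{-s}\,d\mu(s)$ into the finiteness of $\int_\R \E^{-s}\,d\mu$, so the $\E^x$-scaled tail integral tends to zero by dominated convergence; the only care needed is to keep the two endpoints $\pm\infty$ genuinely separate and invoke~\eqref{eqnMdef-} and~\eqref{eqnMdef+} in their respective regimes.
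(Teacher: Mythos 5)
Your overall structure (well-definedness, verification of the distributional equation, uniqueness via the growth condition) matches the paper's proof, and your uniqueness argument correctly fills in the detail the paper only cites ($v=P_1-P_2$ solves $v-v''=0$, is therefore classical, equals $a\E^{x}+b\E^{-x}$, and the growth bound kills both coefficients). The genuine error is in the asymptotics as $x\rightarrow+\infty$: you claim that the term $\frac{1}{4}\E^{-x}\int_{-\infty}^x\E^{s}u(s)^2ds+\frac{1}{4}\E^{-x}\int_{-\infty}^{x}\E^{s}d\mu(s)$ is $\OO(\E^{-x})$. That would require $\int_{-\infty}^{x}\E^{s}d\mu(s)$ to stay bounded as $x\rightarrow\infty$, which fails precisely for the non-decaying data this paper is designed to handle. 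Concretely, take $u\equiv0$ and $\mu=\dip$ equal to Lebesgue measure restricted to $(0,\infty)$: conditions~\eqref{eqnMdef-} and~\eqref{eqnMdef+} hold (the latter because $\E^{x}\int_x^\infty\E^{-s}ds=1$), so this pair belongs to $\CHdom$, and yet $\frac{1}{4}\E^{-x}\int_{-\infty}^{x}\E^{s}d\mu(s)=\frac{1}{4}\bigl(1-\E^{-x}\bigr)\rightarrow\frac{1}{4}$, which is not $\OO(\E^{-x})$.

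What is true, and all the lemma needs, is that this term is $\oo(\E^{x})$, and this is exactly where the paper's dominated convergence argument enters: for $s\leq x$ one has $\E^{s-2x}\leq\E^{-s}$, and $\E^{s-2x}\rightarrow0$ pointwise as $x\rightarrow\infty$, so
\begin{align*}
\E^{-2x}\int_{-\infty}^x \E^{s}u(s)^2ds+\E^{-2x}\int_{-\infty}^{x}\E^{s}d\mu(s)\rightarrow 0
\end{align*}
because $\int_\R\E^{-s}u(s)^2ds+\int_\R\E^{-s}d\mu(s)<\infty$ by Lemma~\ref{lem:umuEST}; hence the term in question is $\oo(\E^{2x})\,\E^{-x}=\oo(\E^{x})$. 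Note that your closing paragraph attributes the dominated-convergence step to the tail term $\E^{x}\int_x^\infty\E^{-s}(\cdots)$, but that term needs no such argument (the tail of a convergent integral tends to zero, as you also observe); the delicate piece is the one you dismissed as $\OO(\E^{-x})$. Your treatment of $x\rightarrow-\infty$ is fine as far as the conclusion goes: there the crude bound $\E^{-|x-s|}\leq\E^{x-s}$ gives $P(x)=\OO(\E^{x})=\oo(\E^{-x})$, which is what the paper does. But it is not a ``symmetric computation'': at $-\infty$ the data decays strongly by~\eqref{eqnMdef-}, whereas at $+\infty$ it need not decay at all, which is why the two regimes require genuinely different arguments.
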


\begin{proof}
 Since we may write 
 \begin{align}\label{eq:P0defSum}
 \begin{split}
  P(x) & = \frac{1}{4} \E^{-x}\int_{-\infty}^x \E^{s} u(s)^2 ds +  \frac{1}{4} \E^{-x}\int_{-\infty}^{x} \E^{s} d\mu(s)\\
  & \qquad\qquad + \frac{1}{4} \E^{x}\int_x^\infty \E^{-s} u(s)^2 ds +  \frac{1}{4} \E^{x}\int_{x}^{\infty} \E^{-s} d\mu(s),
  \end{split}
 \end{align}
  Lemma~\ref{lem:umuEST} guarantees that the function $P$ is well-defined.
 Moreover, it is straightforward to verify that $P$ is a distributional solution to~\eqref{eq:PequDef} and hence we only need to show that it satisfies the asymptotics~\eqref{eq:PasympInfty}. 
 First of all, one has 
 \begin{align*}
   P(x)  & \leq \frac{1}{4} \int_{\R} \E^{x-s} u(s)^2 ds +  \frac{1}{4} \int_{\R} \E^{x-s} d\mu(s)  = \OO(\E^{x}) = \oo(\E^{-x})
 \end{align*}
 as $x\rightarrow-\infty$. 
 On the other side, using~\eqref{eq:P0defSum} we get
  \begin{align*}
  P(x) =  \frac{1}{4} \E^{-x}\int_{-\infty}^x \E^{s} u(s)^2 ds +  \frac{1}{4}\E^{-x}\int_{-\infty}^{x} \E^{s} d\mu(s) + \oo(\E^{x})
 \end{align*}
 as $x\rightarrow\infty$ and it remains to apply dominated convergence to conclude that both of the two integrals are of order $\oo(\E^{2x})$ as $x\rightarrow\infty$. 
Finally, uniqueness follows from the well-known fact that distributional solutions to constant coefficient ordinary differential equations are classical solutions.
 \end{proof}
 
 Taking into account the uniqueness established in Lemma~\ref{lem:uniqP}, one can show that convergence in $\CHdom$ implies convergence of the corresponding solutions from Lemma~\ref{lem:uniqP} in the sense of distributions. 
 However, we will prove a considerably stronger convergence result in Corollary~\ref{cor:convPk} under a mild additional assumption.

 \section{The isospectral problem}\label{secSP}

In this section, we are going to introduce the spectral quantities that will linearize the conservative Camassa--Holm flow on $\CHdom$. 
For this purpose, let us fix an arbitrary pair $(u,\mu)$ in $\CHdom$ and define the distribution $\omega$ in $H^{-1}_{\loc}(\R)$ by
\begin{align}\label{eqnDefomega}
 \omega(h) = \int_\R u(x)h(x)dx + \int_\R u'(x)h'(x)dx, \quad h\in H_{\cc}^1(\R),
\end{align}
so that one has $\omega = u - u''$ in a distributional sense. 
We also recall that the positive Borel measure $\dip$ on $\R$ is defined by~\eqref{eqndipdef} and that it is always possible to uniquely recover the pair $(u,\mu)$ from the distribution $\omega$ and the measure $\dip$. 
 
Associated with the pair $(u,\mu)$ is the ordinary differential equation 
\begin{align}\label{eqnDE}
 - f'' + \frac{1}{4} f = z\, \omega f + z^2 \dip f, 
\end{align}
where $z$ is a complex spectral parameter. 
Due to the low regularity of the coefficients, this differential equation has to be understood in a distributional sense in general; see \cite{ConservCH, IndefiniteString, gewe14, sash03}:
  A solution of~\eqref{eqnDE} is a function $f\in H^1_{\loc}(\R)$ such that 
 \begin{align}\label{eqnDEweakform}
   \int_{\R} f'(x) h'(x) dx + \frac{1}{4} \int_\R f(x)h(x)dx = z\, \omega(fh) + z^2 \int_\R f h \,d\dip 
 \end{align} 
 for every function $h\in H^1_\cc(\R)$.
 We note that the derivative of such a solution $f$ is in general only defined almost everywhere. 
 However, there always is a unique left-continuous function $f^\qd$ on $\R$ such that 
 \begin{align}\label{eqnfqpm} 
     f^\qd = f' +\frac{1}{2} f - z (u +u') f 
\end{align} 
 almost everywhere on $\R$ (see \cite[Lemma~A.2]{ConservCH}), called the {\em quasi-derivative} of $f$. 

 The main consequence of the strong decay condition on the pair $(u,\mu)$ at $-\infty$ in~\eqref{eqnMdef-} is the existence of a particular fundamental system of solutions to the differential equation~\eqref{eqnDE} with certain prescribed asymptotics near $-\infty$.

\begin{theorem}\label{thmThetaPhi}
For every $z\in\C$ there are unique solutions $\phi(z,\redot)$ and $\theta(z,\redot)$ of the differential equation~\eqref{eqnDE} with the asymptotics 
\begin{align}\label{eqnphiasym}
  \phi(z,x) & \sim \E^{\frac{x}{2}}, & \theta(z,x) & \sim \E^{-\frac{x}{2}},   \\
\label{eqnthetaasym}
  \phi^\qd(z,x) & \sim \E^{\frac{x}{2}}, &  \theta^\qd(z,x) & = \oo\bigl(\E^{\frac{x}{2}}\bigr),   
\end{align}
as $x\rightarrow-\infty$. 
Moreover,  the derivative of the solution $\phi(z,\redot)$ is integrable and square integrable near $-\infty$. 
\end{theorem}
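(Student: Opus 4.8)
The plan is to recast the weak equation \eqref{eqnDEweakform} as a first-order system for the pair $(f,f^\qd)$ and to construct $\phi$ and $\theta$ by a variation-of-constants argument based at $-\infty$, where the strong decay condition \eqref{eqnMdef-} provides exactly the integrability that is needed. Writing $w:=u+u'$, the quasi-derivative relation \eqref{eqnfqpm} gives $f'=(zw-\tfrac12)f+f^\qd$ almost everywhere, and inserting this into \eqref{eqnDEweakform} and integrating by parts, one checks that a function $f\in H^1_\loc(\R)$ solves the differential equation if and only if $f^\qd$ is a left-continuous function of locally bounded variation with
\begin{align*}
 \int_\R h\,df^\qd = \int_\R\Bigl[\bigl(\tfrac12-zw\bigr)f^\qd - z^2w^2 f\Bigr]h\,dx - z^2\int_\R fh\,d\dip
\end{align*}
for all $h\in H^1_\cc(\R)$ (the $dx$-terms of order $z$ conveniently cancel). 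Thus $F=(f,f^\qd)^\top$ solves a first-order system $dF=(A+R)F\,dx+SF\,d\dip$, where the constant part $A=\bigl(\begin{smallmatrix}-\tfrac12&1\\0&\tfrac12\end{smallmatrix}\bigr)$ is perturbed by matrices $R,S$ built from $w$ and polynomial in $z$, and whose fundamental matrix $\Phi_0(x)=\bigl(\begin{smallmatrix}\E^{-x/2}&\E^{x/2}\\0&\E^{x/2}\end{smallmatrix}\bigr)$ has columns $\E^{-x/2}(1,0)^\top$ and $\E^{x/2}(1,1)^\top$ matching precisely the leading behaviour of $\theta$ and $\phi$ prescribed in \eqref{eqnphiasym}--\eqref{eqnthetaasym}.

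Next I would pass to the coordinates $c(x)=\Phi_0(x)^{-1}F(x)$, which turns the system into $dc=B\,c\,dx+B_\dip\,c\,d\dip$ with
\begin{align*}
 B(x)=\begin{pmatrix} zw+z^2w^2 & (2zw+z^2w^2)\E^{x}\\ -z^2w^2\E^{-x} & -(zw+z^2w^2)\end{pmatrix},\qquad B_\dip(x)=\begin{pmatrix} z^2 & z^2\E^{x}\\ -z^2\E^{-x} & -z^2\end{pmatrix}.
\end{align*}
The decisive point is that every entry is integrable at $-\infty$: the entries carrying the weight $\E^{-s}$ are controlled by $\int_{-\infty}^0\E^{-s}w(s)^2\,ds$ and $\int_{-\infty}^0\E^{-s}\,d\dip$, both finite by \eqref{eqnMdef-}, while the remaining entries follow from $\E^s\le 1$ for $s\le 0$ together with the Cauchy--Schwarz bound $w\in L^1\cap L^2$ near $-\infty$. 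Consequently $\int_{-\infty}^{x_0}\|B\|\,ds+\int_{-\infty}^{x_0}\|B_\dip\|\,d\dip\to 0$ as $x_0\to-\infty$.

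With this in hand, I would solve the Volterra integral equations obtained by integrating $dc$ from $-\infty$: prescribing $c\to(0,1)^\top$ gives $\phi$ and prescribing $c\to(1,0)^\top$ gives $\theta$. On a half-line $(-\infty,x_0]$ the associated operator is a contraction on $L^\infty$, which yields unique bounded solutions (automatically analytic in $z$ from the uniformly convergent Neumann series); these extend to all of $\R$ by the general existence and uniqueness theory for \eqref{eqnDE}. Writing $F=\Phi_0 c$ and noting $f=c_1\E^{-x/2}+c_2\E^{x/2}$, $f^\qd=c_2\E^{x/2}$, the prescribed limits of $c$ immediately give $\theta\sim\E^{-x/2}$ and $\theta^\qd=c_2\E^{x/2}=\oo(\E^{x/2})$; for $\phi$ the estimate $c_1=\oo(\E^{x})$ (by Cauchy--Schwarz and finiteness of $\dip$ near $-\infty$) yields $\phi\sim\E^{x/2}$ and $\phi^\qd\sim\E^{x/2}$. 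Finally, integrability and square-integrability of $\phi'=(zw-\tfrac12)\phi+\phi^\qd$ near $-\infty$ follow from $\phi,\phi^\qd=\OO(\E^{x/2})$ together with $w(s)\,\E^{s/2}\in L^1\cap L^2$ near $-\infty$.

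I expect the solution $\theta$, which is dominant at $-\infty$, to be the main obstacle. Its existence rests entirely on the integrability of the $\E^{-s}$-weighted entries of $B$ and $B_\dip$, that is, on condition \eqref{eqnMdef-}, which is exactly what keeps the corrections subdominant relative to $\E^{-x/2}$. Its uniqueness is subtler than that of $\phi$, because the asymptotics $\theta\sim\E^{-x/2}$ alone do not determine $\theta$: one may add any multiple of $\phi\sim\E^{x/2}=\oo(\E^{-x/2})$ without changing it. The condition $\theta^\qd=\oo(\E^{x/2})$ removes this freedom, and I would exploit it via the Wronskian $W(\phi,\theta)=\phi\,\theta^\qd-\phi^\qd\,\theta$, which is constant in $x$ for solutions of the system (the $dx$-part is trace-free and the $\dip$-part nilpotent). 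If $\theta_1,\theta_2$ both have the prescribed asymptotics, their difference $\delta$ satisfies $\delta=\oo(\E^{-x/2})$ and $\delta^\qd=\oo(\E^{x/2})$, so $W(\phi,\delta)\to 0$ and hence $W(\phi,\delta)\equiv 0$; this forces $\delta$ to be a multiple of $\phi$, and $\delta^\qd=\oo(\E^{x/2})$ then forces that multiple to be zero.
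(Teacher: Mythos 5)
Your proposal is correct, and it reaches the theorem by a route that differs in execution from the paper's. The paper never works on the line: it substitutes $y(x)=\sqrt{x}\,f(\log x)$, which turns \eqref{eqnDE} into the generalized indefinite string equation \eqref{eqntildeString} on $(0,\infty)$, whose left endpoint $0$ (the image of $-\infty$) is a \emph{regular} endpoint; existence and uniqueness then come from the standard matrix Volterra equation \eqref{eqnCanSysString} based at $0$, and the one nontrivial asymptotic, $\phi(z,x)\sim\E^{\nicefrac{x}{2}}$, is imported as $Y_{12}(z,x)\sim zx$ from \cite[Equation~(2.7)]{ConservCH}. Your diagonalization $c=\Phi_0^{-1}F$ and Volterra equation based at $-\infty$ is precisely what that substitution produces, but carried out directly in the line variable: your reformulated weak equation (the order-$z$ terms do cancel, as you claim), your matrices $B$ and $B_\dip$, and the identification of their integrability near $-\infty$ with condition \eqref{eqnMdef-} are all correct, and your Wronskian argument (constancy holds exactly because the $dx$-part is trace-free and the jump factors $\det(I+S\,\dip(\{p\}))$ equal one) supplies a genuine proof of uniqueness, which the paper merely asserts. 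What the paper's route buys is reusability: the string $(\wt\omega,\wt\dip)$ and the solution $Y$ are the objects on which the whole spectral apparatus of \cite{IndefiniteString} and \cite{DSpec} runs, so the substitution is not overhead there. What your route buys is self-containedness and transparency: the role of \eqref{eqnMdef-} as an $L^1$-condition at $-\infty$ is immediate, and no auxiliary coordinates are needed.

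One step you should expand is the estimate $c_1=\oo(\E^{x})$ for $\phi$. A single pass through the integral equation, using only that $c$ is bounded with $c_1\rightarrow0$, controls the worst term by $\int_{-\infty}^x|w||c_1|\,ds=\oo(\E^{\nicefrac{x}{2}})$, which is \emph{not} $\oo(\E^{x})$, since $\E^{\nicefrac{x}{2}}\gg\E^{x}$ as $x\rightarrow-\infty$. You need either a second bootstrap iteration (feed $c_1=\oo(\E^{\nicefrac{x}{2}})$ back into the same term) or, more cleanly, to run the contraction in the weighted norm $\sup\bigl(\E^{-x}|c_1(x)|+|c_2(x)|\bigr)$, in which every entry estimate is already $\oo(1)$. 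Both variants use only the Cauchy--Schwarz bounds and the finiteness of $\dip$ near $-\infty$ that you invoke, and they are the exact analogue of the bootstrap hidden behind the paper's citation of $Y_{12}(z,x)\sim zx$; so this is a compressed step rather than a gap, but as written the one-line justification does not yet yield the stated order.
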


\begin{proof}
One can find a proof in~\cite[Theorem~2.1]{ConservCH}.
However, we provide the details since our further considerations will rely heavily on the corresponding constructions. 
First, notice that the claim is obvious when $z$ is zero since one can set 
 \begin{align*}
  \phi(0,x) & = \E^{\frac{x}{2}}, & \theta(0,x) & = \E^{-\frac{x}{2}},  
 \end{align*}
 so that we may assume that $z\not=0$.
 Let us suppose that two functions $f$ on $\R$ and $y$ on $(0,\infty)$ are related via
\begin{align}\label{eqnfg}
y(x) = \sqrt{x} f(\log x) = \frac{1}{\sqrt{\Sr'(x)}} f(\Sr(x)), 
\end{align} 
where the diffeomorphism $\Sr\colon (0,\infty)\rightarrow\R$ is simply given by  
 \begin{align*}
  \Sr(x) = \log x.
 \end{align*} 
   We first observe that $f$ belongs to $H^1_\loc(\R)$ if and only if  $y$ belongs to $H^1_{\loc}(0,\infty)$. 
   In this case, for a given function $h_\Sr$ in $H^1_\cc(0,\infty)$, a substitution yields 
   \begin{align*}
     \int_0^\infty y'(x)h_\Sr'(x)dx & = \int_0^\infty \biggl(f'(\Sr(x))+\frac{f(\Sr(x))}{2}\biggr) \biggl(h'(\Sr(x)) + \frac{h(\Sr(x))}{2}\biggr) \Sr'(x) dt \\
      & = \int_\R f'(x)h'(x)dx + \frac{1}{4} \int_\R f(x)h(x)dx,
   \end{align*} 
   where the functions $h\in H^1_\cc(\R)$ and $h_\Sr\in H^1_\cc(0,\infty)$ are related by 
   \begin{align*}
     h_\Sr(x)  = \sqrt{x} h(\log x).
    \end{align*}
     With the pair $(\wt\Wr,\wt\dip)$ defined by~\eqref{eqnDefa} and~\eqref{eqnDefbeta} as in Section~\ref{secSpace}, one computes that
   \begin{align}\label{eq:WrString}
     \int_0^\infty  \wt{\Wr}(x) (yh_\Sr)'(x)dx & = - \int_\R u(x)f(x) h(x)dx - \int_\R u'(x)(fh)'(x)dx,
   \end{align} 
   as well as
   \begin{align*}
     \int_{0}^{\infty} yh_\Sr\, d\wt{\dip} & = \int_{0}^\infty f(\Sr(x))h(\Sr(x)) d\dip\circ\Sr(x) =  \int_{\R} fh\, d\dip.
   \end{align*}
   With the help of these identities, we conclude that a function $f$ is a solution of the differential equation~\eqref{eqnDE} if and only if the function $y$ defined by \eqref{eqnfg} satisfies 
    \begin{align*}
   \int_{0}^\infty y'(x) h_\Sr'(x) dx  = z\, \wt{\omega}(yh_\Sr) + z^2 \int_{0}^{\infty} y h_\Sr \,d\wt{\dip} 
 \end{align*} 
 for every function $h_\Sr\in H^1_\cc(0,\infty)$, and thus $y$ is a (distributional) solution of the differential equation
       \begin{align}\label{eqntildeString}
    - y'' = z\,\wt{\omega}  y + z^2\wt{\dip} y,
   \end{align}
   where $\wt{\omega}$ is the real distribution in $H^{-1}_{\loc}[0,\infty)$ given by 
 \begin{align*}
 \wt{\omega}(g) = - \int_0^\infty \wt{\Wr}(x) g'(x)dx
 \end{align*}
 for all $g\in H^1_{\cc}[0,\infty)$, so that $\wt{\Wr}$ is the {\em normalized antiderivative} of $\wt\omega$.
  On the other hand (see~\cite[Section~6]{IndefiniteString}), solutions of~\eqref{eqntildeString} are closely related to the matrix solution $Y$ of the system
 \begin{align}\begin{split}\label{eqnCanSysString}
  Y(z,x) = \begin{pmatrix} 1 & 0 \\ 0 & 1 \end{pmatrix} & +  z\int_0^x \begin{pmatrix} - \wt{\Wr}(s) & 1 \\ - \wt{\Wr}(s)^2 &  \wt{\Wr}(s) \end{pmatrix} Y(z,s)ds \\ 
       & +  z\int_{0}^{x} \begin{pmatrix} 0 & 0 \\ -1 & 0 \end{pmatrix} Y(z,s) d\wt{\dip}(s), \quad x\in[0,\infty).
 \end{split}\end{align} 
Indeed, it is easy to see that each column of $Y(z,\redot)$ is of the form  
\begin{align*}
\begin{pmatrix} zy \\  y' + z \wt{\Wr} y \end{pmatrix},
\end{align*}
where $y$ is a solution to~\eqref{eqntildeString}. 
Since $\wt{\Wr}\in L^2_{\loc}[0,\infty)$ and $\wt{\dip}$ is a positive Borel measure on $[0,\infty)$,  for each $z\in\C$ there is a unique matrix solution $Y (z,\redot)$ on $[0,\infty)$ of the integral equation \eqref{eqnCanSysString}.
Taking into account that the quasi-derivative \eqref{eqnfqpm} of a solution $f$ given by \eqref{eqnfg} is expressed by 
\begin{align*}
\E^{-\frac{x}{2}}f^\qd(x) =  (y'+z\wt{\Wr}y)\circ \Sr^{-1}(x), 
\end{align*}
we can obtain the desired solutions $\phi(z,\redot)$ and $\theta(z,\redot)$ to \eqref{eqnDE} by setting
 \begin{align*}
 \begin{pmatrix}  \theta (z,x) & z\phi (z,x) \\ 
        z^{-1}\theta^\qd(z,x) & \phi ^\qd(z,x) \end{pmatrix} =  
       \begin{pmatrix} \E^{-\frac{x}{2}} & 0 \\ 0 & \E^{\frac{x}{2}} \end{pmatrix} Y (z,\Sr^{-1}(x)), \quad x\in\R.
 \end{align*}
Indeed, this immediately implies that $\theta(z,\redot)$,  $\theta^\qd(z,\redot)$ and $\phi^\qd(z,\redot)$ have the required asymptotics. 
 Regarding $\phi(z,\redot)$ however, we only get $\phi(z,x)= \oo(\E^{-\frac{x}{2}})$ as $x\rightarrow-\infty$. 
 Nevertheless, one can show that $Y_{12}(z,x)\sim zx$ as $x\to 0$ (see \cite[Equation~(2.7)]{ConservCH} for example), which yields the required asymptotic behavior of $\phi(z,\redot)$. 
 It remains to note that the asymptotic behavior in~\eqref{eqnphiasym} and~\eqref{eqnthetaasym} uniquely determines the solutions $\phi(z,\redot)$ and $\theta(z,\redot)$. 
\end{proof}

With these solutions, we are able to define the {\em Weyl--Titchmarsh function} 
\begin{align}\label{eq:m-funct-def}
  m(z) = -\lim_{x\rightarrow\infty} \frac{\theta(z,x)}{z\phi(z,x)}, \quad z\in\C\backslash\R,
\end{align}
which is a Herglotz--Nevanlinna function in view of~\cite[Section~5]{IndefiniteString}. 
As such, the Weyl--Titchmarsh function $m$ has an integral representation \cite{kakr74a}, \cite[Section~5.3]{roro94} of the form 
\begin{align}\label{eqnWTmIntRep}
 m(z) = c_1 z + c_2 +  \int_\R \frac{1}{\lambda-z} - \frac{\lambda}{1+\lambda^2}\, d\rho(\lambda), \quad z\in\C\backslash\R, 
\end{align}
for some constants $c_1$, $c_2\in\R$ with $c_1\geq0$ and a positive Borel measure $\rho$ on $\R$ with  
\begin{align}\label{eq:Poisson}
 \int_\R \frac{d\rho(\lambda)}{1+\lambda^2} < \infty.
\end{align}
In fact, the measure $\rho$ can be seen to be a {\em spectral measure} for a self-adjoint linear relation associated with~\eqref{eqnDE}; we only refer to~\cite{CHPencil} for more details.\footnote{For Sturm--Liouville spectral problems on the real line (or, more generally, on an interval with two singular endpoints), it is more common to work with a $2\times2$-matrix valued Weyl--Titchmarsh function. However, as it was emphasized by K.~Kodaira~\cite{kod} (see also~\cite{gezi07,kst} for further details), in certain cases it is enough to use a scalar Weyl--Titchmarsh function instead. The existence of a fundamental system of solutions like the one in Theorem~\ref{thmThetaPhi} is sufficient for example. Another instance of this is the case of Krein strings with a singular left endpoint as considered in~\cite{kot07}, which indicates that  the strong decay condition at $-\infty$ in Definition~\ref{def:Dspace} may be relaxed slightly. Let us also mention that a spectral measure can be introduced in a more straightforward way by employing the method of directing functionals (see, for instance,~\cite{fll08, kac56, kac65}).}

\begin{definition}\label{def:Spectrum}
The spectrum $\sigma$ of the differential equation~\eqref{eqnDE} is defined as the topological support of the measure $\rho$ or equivalently as the smallest closed set $B\subseteq\R$ such that $m$ has an analytic continuation to $\C\backslash B$. 
\end{definition}
 
Condition~\eqref{eqnMdef+} on the asymptotic behavior of the coefficients at $\infty$ entails that the spectrum $\sigma$ has a gap around zero. 
Moreover, we will see in the next result that the function $m$ is indeed uniquely determined by the spectral measure $\rho$. 

\begin{proposition}\label{prop:WTrepr}
  Zero does not belong to the spectrum $\sigma$ and 
\begin{align}\label{eqnWTmIntRepZero}
  m(z) = \int_\R \frac{z}{\lambda(\lambda-z)} d\rho(\lambda), \quad z\in\C\backslash\R. 
\end{align}
\end{proposition}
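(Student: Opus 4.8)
The plan is to push the problem through the change of variables from the proof of Theorem~\ref{thmThetaPhi}, which turns the differential equation~\eqref{eqnDE} into the generalized indefinite string~\eqref{eqntildeString} with coefficients $(\wt\Wr,\wt\dip)\in\String_0$, and then to read both assertions off the spectral theory of such strings. By Lemma~\ref{lem:propwt} these coefficients obey the growth bound~\eqref{eqnMdefTilde}, and since $(\wt\Wr,\wt\dip)\in\String_0$ the measure $\wt\dip$ carries no point mass at the regular endpoint $0$. The first assertion, $0\notin\sigma$, is the crux; granting it, $m$ is finite at the origin and the special representation~\eqref{eqnWTmIntRepZero} will follow by locating the two free constants $c_1,c_2\in\R$ (with $c_1\ge0$) in the Herglotz--Nevanlinna representation~\eqref{eqnWTmIntRep}. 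Indeed, since $\frac{z}{\lambda(\lambda-z)}=\frac{1}{\lambda-z}-\frac1\lambda$, a direct comparison shows that identity~\eqref{eqnWTmIntRepZero} is equivalent to the two scalar conditions $c_1=0$ and $m(0)=0$.

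To establish the spectral gap I would exploit the Hardy--Muckenhoupt character of~\eqref{eqnMdefTilde} flagged in the footnote to~\eqref{eqnMdef+}: after the substitution turning~\eqref{eqnMdef+} into~\eqref{eqnMdefTilde}, the latter is precisely the two-sided bound rendering the associated Hardy-type operator bounded. In the spectral theory of generalized indefinite strings this boundedness corresponds to bounded invertibility of the underlying self-adjoint relation in a neighbourhood of the origin, that is, to an honest open gap in $\sigma$ around zero; this is exactly the setting of the quantitative spectral-gap estimates of~\cite{DSpec} (building on~\cite{IndefiniteString}). Applying those estimates to $(\wt\Wr,\wt\dip)$ yields $0\notin\sigma$, so that $m$ continues analytically across the origin and $\rho$ has no mass in some interval $(-\delta,\delta)$.

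Granting the gap, the integrals in~\eqref{eqnWTmIntRepZero} converge: near zero there is no mass, while for large $|\lambda|$ the integrand is $O(\lambda^{-2})$, which is controlled by the Poisson condition~\eqref{eq:Poisson}. The constant $c_1=\lim_{\eta\to\infty}m(\I\eta)/(\I\eta)$ is governed by the local behaviour at the regular endpoint and equals the mass $\wt\dip(\{0\})$ there, which vanishes by definition of $\String_0$; hence $c_1=0$ (see~\cite{IndefiniteString}). To obtain $m(0)=0$ I would expand to first order in $z$ about the explicit $z=0$ solutions underlying~\eqref{eqnphiasym}--\eqref{eqnthetaasym}: writing $\theta(z,x)=\E^{-x/2}+z\,\theta_1(x)+O(z^2)$ and letting $\psi(z,\redot)=\theta(z,\redot)+z\,m(z)\,\phi(z,\redot)$ be the solution recessive at $+\infty$, the definition~\eqref{eq:m-funct-def} expresses $m(0)$ through a Wronskian of $\E^{-\redot/2}$ with $\psi_1-\theta_1$, and the $u+u'$ terms arising from the quasi-derivative~\eqref{eqnfqpm} cancel. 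The normalisation of $\theta(z,\redot)$ at $-\infty$ fixes $\theta_1$ as a particular solution of $-g''+\frac14 g=\omega\,\E^{-\redot/2}$, and—using the decay from~\eqref{eqnMdef-} together with the bound~\eqref{eqnMdef+}—one checks that this $\theta_1$ is recessive at both endpoints, so its growing component at $+\infty$ is absent and $m(0)=0$. Combining $c_1=0$ with $m(0)=0$ turns~\eqref{eqnWTmIntRep} into~\eqref{eqnWTmIntRepZero}.

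The main obstacle is the spectral gap: proving that~\eqref{eqnMdefTilde} produces a genuine interval free of spectrum around zero—rather than merely spectrum thinning towards the origin—is what requires the full quantitative string estimates of~\cite{DSpec}. The evaluation of $m(0)$ is a secondary subtlety, since $\omega$ is only a distribution and the naive pairing of $\omega$ with $\E^{-\redot}$ need not converge at $-\infty$; the expansion must therefore be carried out in the weak formulation~\eqref{eqnDEweakform}, where the cancellation of the $u+u'$ contributions coming from~\eqref{eqnfqpm} is what makes the argument legitimate.
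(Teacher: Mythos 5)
Your reduction of \eqref{eqnWTmIntRepZero} to the two scalar facts $c_1=0$ and $m(0)=0$ is correct, and your treatment of the spectral gap (spectral theory of the string $(\wt\Wr,\wt\dip)$ from \cite{DSpec}) and of $c_1$ (equal to $\wt\dip(\{0\})=0$) coincides with the paper's. The genuine gap is your proof of $m(0)=0$. Write $\psi(z,\redot)=\theta(z,\redot)+z\,m(z)\,\phi(z,\redot)$ and let the subscript $1$ denote the $z$-derivative at $z=0$; then $\psi_1-\theta_1=m(0)\,\phi(0,\redot)=m(0)\,\E^{x/2}$ \emph{identically}. Your claim about $\theta_1$ is fine and elementary: since $\theta(z,x)=\E^{-x/2}Y_{11}(z,\E^x)$, the integral equation \eqref{eqnCanSysString} and the identity for $\int_0^x\wt\Wr(s)\,ds$ in the proof of Lemma~\ref{lem:topStr} give $\theta_1(x)=\E^{-x/2}\bigl(u(x)+\int_{-\infty}^x u(s)\,ds\bigr)$, which is $\oo(\E^{x/2})$ by Lemma~\ref{lem:umuEST}. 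But because of the identity above, concluding $m(0)=0$ from this is \emph{equivalent} to showing $\psi_1(x)=\oo(\E^{x/2})$, and your only justification for that is that $\psi(z,\redot)$ is ``recessive at $+\infty$''. Recessiveness is a statement about $z\neq0$ in the resolvent set, and it does not pass to $\partial_z$ at $z=0$ by any soft argument, precisely because the coefficients do not decay at $+\infty$: for step-like data with $u\equiv c\neq0$ near $+\infty$ (allowed in $\CHdom$), the equation there reads $-f''+\frac{1}{4}f=zcf$, so $\psi(z,x)$ decays like $\E^{-\sqrt{1/4-zc}\,x}=\E^{(-1/2+zc+O(z^2))x}$, and the ratio $\psi(z,x)/\E^{-x/2}$ is unbounded in $x$ for arbitrarily small $z$ with $\re(zc)>0$. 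Thus the recessive behaviour is not uniform in $z$ near $0$; to differentiate it at $z=0$ you would need quantitative dichotomy (Combes--Thomas type) bounds $|\psi(z,x)|\leq C_r\,\E^{(-1/2+\delta(r))x}$ with $\delta(r)<1$, uniformly on circles $|z|=r$ inside the gap, and establishing these for distributional, non-decaying coefficients is exactly the hard analytic content your sketch omits. As written, the inference ``$\theta_1$ has no growing component, hence $m(0)=0$'' is circular.

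The paper closes this step by a different, global argument worth comparing with yours: besides the gap, \cite[Theorem~5.2~(i)]{DSpec} also yields the Ces\`aro limit \eqref{eqnWratinf}, and the dilated strings defined by $\wt\Wr_r(x)=\wt\Wr(rx)$ and $\wt\dip_r([0,x))=r^{-1}\wt\dip([0,rx))$ satisfy $m_r(z)=m(r^{-1}z)$. By \eqref{eqnWratinf}, together with $\wt\Wr\in L^2(0,\infty)$ and finiteness of $\wt\dip$, these dilated strings converge to the trivial string, whose Weyl--Titchmarsh function vanishes identically, so the continuity result \cite[Proposition~6.2]{IndefiniteString} gives $m(0)=\lim_{r\to\infty}m_r(\I)=0$. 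This sidesteps all pointwise asymptotics of Weyl solutions at $+\infty$, which is exactly where your approach gets stuck.
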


 \begin{proof}
    With the notation from the proof of Theorem~\ref{thmThetaPhi} we have 
 \begin{align*}
  m(z) = -\lim_{x\rightarrow\infty} \frac{\theta(z,x)}{z\phi(z,x)} = - \lim_{x\rightarrow\infty} \frac{Y_{11}(z,x)}{Y_{12}(z,x)},
\end{align*}
 which shows that $m$ is the Weyl--Titchmarsh function of the generalized indefinite string $(L,\wt{\omega},\wt{\dip})$ with $L=\infty$.  
 Because the pair $(\wt\Wr,\wt\dip)$ satisfies~\eqref{eqnMdefTilde}, it follows from~\cite[Theorem~5.2~(i)]{DSpec} that $m$ admits an analytic continuation to zero and that zero does not belong to the spectrum $\sigma$.
 Moreover, we also get that   
\begin{align}\label{eqnWratinf}
    \lim_{x\rightarrow\infty} \frac{1}{x}\int_0^{x}  \wt{\Wr}(s)ds = 0. 
\end{align}
By repeating the arguments in~\cite[Section~6]{AsymCS}, one can show that this limit actually coincides with $m(0)$.
For the sake of completeness however, we shall present the details here. 
To this end, let $r>0$ and consider the generalized indefinite string $(L,\wt\omega_r,\wt\dip_r)$ defined via the normalized anti-derivative $\wt\Wr_r$ of $\wt\omega_r$ by  
\begin{align*}
\wt{\Wr}_r(x) & = \wt{\Wr}(rx), & \wt{\dip}_r([0,x)) & = r^{-1}\wt{\dip}([0,rx)).
\end{align*} 
 Replacing $\wt{\Wr}$ and $\wt{\dip}$ in~\eqref{eqnCanSysString} with $\wt{\Wr}_r$ and $\wt{\dip}_r$, one sees that the corresponding matrix solution $Y_r$ is then simply given by
\begin{align*}
Y_r(z,x) = Y(r^{-1}z,rx),
\end{align*}
which implies that the corresponding Weyl--Titchmarsh function $m_r$ is given by
\begin{align*}
m_r(z) = m(r^{-1}z).
\end{align*}
%
On the other hand, we clearly have  
\begin{align*}
\lim_{r\to \infty}\int_0^x \wt{\Wr}_r(s)ds  = \lim_{r\to \infty}\frac{1}{r}\int_0^{rx} \wt{\Wr}(s)ds = 0
\end{align*}
for all $x>0$ in view of~\eqref{eqnWratinf}. 
Furthermore, denoting 
\begin{align*}
\varsigma_r(x) = \int_0^x \wt{\Wr}_r(s)^2 ds + \int_{0}^{x} d\wt{\dip}_r,\quad x\in [0,\infty),
\end{align*}
 we get that $\varsigma_r(x)\rightarrow0$ as $r\rightarrow\infty$ uniformly for all $x>0$ because 
\begin{align*}
 \varsigma_r(x) & = \frac{1}{r}\int_0^{rx} \wt{\Wr}(s)^2 ds + \frac{1}{r}\int_{0}^{rx} d\wt{\dip} \leq \frac{1}{r}\int_0^{\infty} \wt{\Wr}(s)^2 ds + \frac{1}{r}\int_{0}^{\infty} d\wt{\dip}.
\end{align*}
We conclude that the generalized indefinite strings $(L,\wt{\omega}_r, \wt{\dip}_r)$ converge to $(L,0,0)$ as $r\rightarrow\infty$ in the sense of \cite[Proposition~6.2]{IndefiniteString}.
 Since the Weyl--Titchmarsh function corresponding to $(L,0,0)$ is identically zero, it follows from~\cite[Proposition~6.2]{IndefiniteString} that $m_r$ converges locally uniformly to zero as $r\rightarrow\infty$, which proves that  
\begin{align*}
  m(0) = \lim_{r\rightarrow\infty} m(r^{-1}\I)=  \lim_{r\rightarrow\infty} m_r(\I) = 0. 
\end{align*}
It remains to observe that, by~\cite[Lemma~7.1]{IndefiniteString}, the constant $c_1$ is zero as $\wt{\dip}(\{0\}) = 0$. 
Because of this, we are able to compute $c_2$ in terms of $\rho$ by evaluating~\eqref{eqnWTmIntRep} at zero and taking into account that $m(0)=0$, which gives 
\[
c_2 
= -\int_{\R} \frac{d\rho(\lambda)}{\lambda(1+\lambda^2)}.
\] 
This readily yields the desired representation~\eqref{eqnWTmIntRepZero}.
 \end{proof}

 \begin{remark}\label{remMatZ}   
 It is also possible to express the derivative of $m$ at zero in terms of the coefficients of the spectral problem~\eqref{eqnDE}, which leads to the relation  
\begin{align}\label{eqnLCPars}
   \int_{\R} \E^{-x} (u(x) + u'(x))^2dx + \int_{\R} \E^{-x}d\dip(x) = \int_\R \frac{d\rho(\lambda)}{\lambda^2}.
  \end{align}
 Formulas of this type usually play an important role in understanding the corresponding completely integrable nonlinear equations because they allow to control solutions through the corresponding spectral data. 
We will give a proof of relation~\eqref{eqnLCPars} in Section~\ref{secIST} (see Corollary~\ref{cor:TraceFla} there). 
 \end{remark}

Our next aim is to show that the quantity in~\eqref{eqnMdef+} allows to control the size of the spectral gap around zero. 
To this end, we define  $E(u,\mu)$ by 
\begin{align}\label{eq:NormDef}
E(u,\mu) = \sup_{x\in \R}\, \E^{\frac{x}{2}} \biggl(\int_{x}^{\infty}\E^{-s}(u(s) + u'(s))^2ds + \int_{x}^{\infty}\E^{-s}d\dip(s)\biggr)^{\nicefrac{1}{2}}, 
\end{align}
which is finite due to the growth restrictions in~\eqref{eqnMdef-} and~\eqref{eqnMdef+}.
Furthermore, we define $\lambda_0(\sigma)$ as a measure for the size of the spectral gap by 
\begin{align}\label{def:lam0}
\lambda_0(\sigma) =  \inf_{\lambda\in \sigma} |\lambda|.
\end{align}

\begin{proposition}\label{prop:SpecGap}
   One has the estimates  
\begin{align}\label{eq:lam0est}
    \frac{1}{6\lambda_0(\sigma)} \leq E(u,\mu) \le \frac{\sqrt{2}}{\lambda_0(\sigma)}.
\end{align}
\end{proposition}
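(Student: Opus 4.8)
The plan is to relate the quantity $E(u,\mu)$ to the spectral gap by passing through the generalized indefinite string picture developed in the proof of Theorem~\ref{thmThetaPhi}. Recall from~\eqref{eq:changeofnorms} and the definitions~\eqref{eqnDefa}, \eqref{eqnDefbeta} that the quantity inside the supremum in $E(u,\mu)$ can be rewritten, after the change of variables $x\mapsto\E^x$, in terms of the string data $(\wt\Wr,\wt\dip)$. More precisely, one has
\begin{align*}
\E^{x}\biggl(\int_x^\infty \E^{-s}(u(s)+u'(s))^2ds + \int_x^\infty \E^{-s}d\dip(s)\biggr) = \ell \int_\ell^\infty \wt\Wr(s)^2\,ds + \ell\int_\ell^\infty d\wt\dip,
\end{align*}
where $\ell=\E^x$, so that $E(u,\mu)^2$ equals the supremum over $\ell>0$ of the right-hand side. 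Thus the claim is equivalent to a two-sided estimate relating this string-theoretic quantity to the spectral gap $\lambda_0(\sigma)$ of the generalized indefinite string $(L,\wt\omega,\wt\dip)$ with $L=\infty$.

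With this reduction in hand, the heart of the matter is a spectral gap estimate for generalized indefinite strings, which I expect to import from the companion work~\cite{DSpec}. The upper bound $E(u,\mu)\le\sqrt2/\lambda_0(\sigma)$ and the lower bound $1/(6\lambda_0(\sigma))\le E(u,\mu)$ should both follow from the characterization of the spectral gap in terms of a Hardy-type quantity built from $(\wt\Wr,\wt\dip)$; indeed Proposition~\ref{prop:WTrepr} already invokes~\cite[Theorem~5.2~(i)]{DSpec} precisely to detect when zero lies outside the spectrum, and the quantitative refinement of that statement should yield the explicit constants. First I would state the relevant estimate from~\cite{DSpec} for the tail quantity $\sup_{\ell>0}\bigl(\ell\int_\ell^\infty\wt\Wr^2 + \ell\int_\ell^\infty d\wt\dip\bigr)$ against $\lambda_0(\sigma)^{-1}$, then transport it back through the identity above.

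The main obstacle is tracking the numerical constants $\sqrt2$ and $6$ correctly through the transformation. The spectral gap of a generalized indefinite string is governed by the norm of an associated Hardy-type operator, and the standard bounds for such operators (as in the Muckenhoupt-type criterion referenced in the footnote to~\eqref{eqnMdef+}) come with explicit but asymmetric constants: the operator norm is comparable to the relevant Hardy functional up to a factor of roughly $4$, while the relation between the operator norm and $\lambda_0(\sigma)^{-1}$ contributes further factors from the square in the spectral problem. Assembling these, together with the square root in the definition~\eqref{eq:NormDef} of $E$, is what produces the factor $\sqrt2$ on one side and the factor $6$ on the other; the asymmetry reflects the gap between the sharp upper and lower Hardy constants.

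I would therefore organize the proof in two steps: first establish the algebraic identity reducing $E(u,\mu)^2$ to the string tail quantity via~\eqref{eq:changeofnorms}, which is routine; second, apply the quantitative spectral gap estimate for $(L,\wt\omega,\wt\dip)$ from~\cite{DSpec}, being careful to extract the constants. An alternative, more self-contained route for at least one of the two inequalities would be to use the integral representation~\eqref{eqnWTmIntRepZero} together with the relation~\eqref{eqnLCPars}: since $m(0)=0$ and $m$ is Herglotz--Nevanlinna with spectral measure $\rho$ supported outside $(-\lambda_0(\sigma),\lambda_0(\sigma))$, the trace formula $\int_\R\E^{-x}(u+u')^2dx+\int_\R\E^{-x}d\dip=\int_\R\lambda^{-2}d\rho$ controls the global version of the $E$-quantity by $\lambda_0(\sigma)^{-2}\int(1+\lambda^2)^{-1}d\rho$-type bounds, giving the upper estimate directly; the localized supremum over $x$, however, genuinely requires the string machinery, so the import from~\cite{DSpec} appears unavoidable for the lower bound.
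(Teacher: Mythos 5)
Your main argument is essentially the paper's own proof: rewrite $E(u,\mu)$, via the change of variables $\ell=\E^x$, as the string tail quantity $\wt{E}(\wt{\Wr},\wt{\dip})=\sup_{\ell>0}\bigl(\ell\int_\ell^\infty\wt{\Wr}(s)^2ds+\ell\int_\ell^\infty d\wt{\dip}\bigr)^{\nicefrac{1}{2}}$, and then import the two-sided spectral gap estimate $1/(6\wt{E})\le\lambda_0(\sigma)\le\sqrt{2}/\wt{E}$ for the generalized indefinite string $(L,\wt\omega,\wt\dip)$. The paper obtains exactly this from \cite[Theorem~4.6~(iv)]{DSpec} combined with known two-sided bounds on the norms of the associated Hardy-type integral operators from \cite{chev70, muc, kakr58, AJPR}, which is the ``quantitative refinement'' you anticipated (your pointer to \cite[Theorem~5.2~(i)]{DSpec} is only the qualitative statement, and the identity you write follows from a direct computation with \eqref{eqnDefa} and \eqref{eqnDefbeta} rather than from \eqref{eq:changeofnorms}, which concerns integrals over $(0,x)$ in terms of $\mu$; but the identity itself is correct).

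One genuine flaw, though only in your optional aside: the ``more self-contained route'' via the trace formula cannot give the upper bound. Relation \eqref{eqnLCPars} equates the global quantity $\int_\R\E^{-x}(u(x)+u'(x))^2dx+\int_\R\E^{-x}d\dip(x)$ with $\int_\R\lambda^{-2}d\rho(\lambda)$, and the latter is not controlled by $\lambda_0(\sigma)$ alone: taking $\rho=N\delta_{\lambda_0}$ with $N$ large (a legitimate element of $\SM_0$, corresponding to a one-peakon profile) keeps the gap fixed while $\int_\R\lambda^{-2}d\rho=N/\lambda_0^2$ becomes arbitrarily large. So no estimate of the form $E(u,\mu)\le C/\lambda_0(\sigma)$ --- not even for the global, un-localized quantity --- can be extracted from the trace formula; both inequalities in \eqref{eq:lam0est} genuinely rest on the operator-norm estimates behind \cite{DSpec}, not just the lower one as you suggest.
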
 

\begin{proof}
  The two-sided estimate in the claim follows by employing the connection between the spectral problems~\eqref{eqnISP} and~\eqref{eqntildeString} as well as some considerations in~\cite{DSpec}. 
  Namely, since $\sigma$ coincides with the spectrum of the generalized indefinite string $(L,\wt\omega,\wt\dip)$, one needs to use~\cite[Theorem~4.6~(iv)]{DSpec} together with two-sided estimates on the norms of the integral operators there, which can be found in~\cite{chev70, muc, kakr58, AJPR} (the relation to the operators in~\cite[Theorem~4.6~(iv)]{DSpec} is explained in the proof of~\cite[Theorem~3.5]{DSpec}).
 These estimates give 
\begin{align*}
\frac{1}{6\wt{E}(\wt{\Wr},\wt{\dip})}\le \lambda_0(\sigma) \le \frac{\sqrt{2}}{\wt{E}(\wt{\Wr},\wt{\dip})}, 
\end{align*}
where $\wt{E}(\wt{\Wr},\wt{\dip})$ is defined by 
\begin{align*}
\wt{E}(\wt{\Wr},\wt{\dip}) = \sup_{x> 0}\, 
 \biggl(x\int_{x}^{\infty}\wt{\Wr}(s)^2ds + x\int_{x}^{\infty}d\wt{\dip}\biggr)^{\nicefrac{1}{2}}.
\end{align*}
 It remains to perform a simple change of variables to see that $\wt{E}(\wt{\Wr},\wt{\dip}) = E(u,\mu)$, which concludes the proof.
\end{proof}

\begin{remark}\label{remEssSpecGap}
  One can find a similar two-sided estimate for the gap of the essential spectrum around zero, by using~\cite[Theorem~4.6]{DSpec} again, together with the estimates in~\cite[Theorem~2]{st73}.
  The main difference hereby is that the supremum in~\eqref{eq:NormDef} has to be replaced by a limes superior at $\infty$. 
\end{remark}

 In this context, let us also note the following result that characterizes the subclass of $\CHdom$ that gives rise to purely positive spectrum.  

\begin{proposition}\label{propDefinite}
 The spectrum $\sigma$ is positive if and only if the measure $\dip$ vanishes identically and the distribution $\omega$ is a positive Borel measure on $\R$. 
 In this case, the measure $\omega$ is finite near $-\infty$ and $u'$ has a representative that is locally of bounded variation. 
 \end{proposition}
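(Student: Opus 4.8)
The plan is to transport everything to the generalized indefinite string picture set up in the proof of Theorem~\ref{thmThetaPhi}, where the spectrum $\sigma$ coincides with the spectrum of the generalized indefinite string $(L,\wt\omega,\wt\dip)$ with $L=\infty$. Under this correspondence the two coefficient conditions translate transparently. By~\eqref{eqnDefbeta}, the measure $\dip$ vanishes identically if and only if $\wt\dip$ does. Moreover, a short computation from~\eqref{eqnDefa} shows that the distributional derivative of $\wt\Wr$ is $\wt\Wr'(x)=x^{-2}\,(u-u'')(\log x)$ in the appropriate distributional sense, so that, the weight $x^{-2}$ being positive, the distribution $\omega$ is a positive Borel measure if and only if $\wt\omega$ is one, i.e.\ if and only if $\wt\Wr$ is non-decreasing. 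Thus the asserted equivalence reduces to the purely string-theoretic statement that the spectrum of $(L,\wt\omega,\wt\dip)$ is positive precisely when $\wt\dip\equiv0$ and $\wt\omega\ge0$.

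For the \emph{if} direction I would argue that when $\wt\dip\equiv0$ and $\wt\omega\ge0$, the differential equation~\eqref{eqntildeString} reduces to $-y''=z\,\wt\omega\,y$, so that $(L,\wt\omega,0)$ is a Krein string with mass distribution $\wt\omega$. The spectral measure of a Krein string is supported on $[0,\infty)$, whence $\sigma\subseteq[0,\infty)$; combined with the fact that $0\notin\sigma$ from Proposition~\ref{prop:WTrepr}, this yields $\sigma\subseteq(0,\infty)$.

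The converse is the heart of the matter and the step I expect to be hardest. Assuming $\sigma\subseteq(0,\infty)$, the spectral measure $\rho$ is supported on $(0,\infty)$, and the integral representation~\eqref{eqnWTmIntRepZero} then shows that $m$ extends analytically across $(-\infty,0)$ with $m(x)<0$ there and $m(0)=0$; in other words, $m$ is a Stieltjes function. The plan is to combine the classical Kac--Krein characterization of Stieltjes functions as the Weyl--Titchmarsh functions of Krein strings with the uniqueness of the inverse spectral problem for generalized indefinite strings from~\cite{IndefiniteString,kakr74a}: the Stieltjes function $m$ is realized by some Krein string, which is in particular a generalized indefinite string with Weyl--Titchmarsh function $m$, so by uniqueness it must coincide with $(L,\wt\omega,\wt\dip)$, forcing $\wt\dip\equiv0$ and $\wt\omega\ge0$. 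The main obstacle here is the bookkeeping of normalizations and endpoint behaviour needed to match the Krein string furnished by Kac--Krein with our generalized indefinite string in the precise sense required by the uniqueness statement; in particular one must check that $0$ is a regular left endpoint, which is consistent with $\wt\dip(\{0\})=0$ and $\wt\Wr\in L^2$ near $0$.

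Finally, for the regularity assertions I would work directly with $\omega\ge0$. Since $u$ is continuous and $\omega$ is a positive, hence locally finite, Borel measure, the identity $u''=u-\omega$ exhibits $u''$ as a locally finite signed measure, so $u'$ admits a left-continuous representative of locally bounded variation. For finiteness near $-\infty$, I would use that Lemma~\ref{lem:umuEST} gives $|u(x)|\le C\E^{x/2}$, whence $u(x)\to0$ as $x\to-\infty$ and $\int_{-\infty}^a|u|\,dx<\infty$. Approximating $\mathbbm{1}_{(-\infty,a)}$ monotonically from below by functions $h_n\in H^1_\cc(\R)$ with $0\le h_n\le1$ and evaluating $\omega(h_n)=\int_\R u\,h_n\,dx+\int_\R u'\,h_n'\,dx$, one sees that the boundary contributions tend to zero because $u(-n)\to0$, so that $\omega(h_n)$ stays bounded; monotone convergence for the positive measure $\omega$ then yields $\omega((-\infty,a))<\infty$.
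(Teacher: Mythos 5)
Your overall route is the same as the paper's: pass to the generalized indefinite string $(L,\wt\omega,\wt\dip)$, translate the two coefficient conditions (which you do correctly, and your treatment of the final regularity claims is essentially the paper's computation~\eqref{eqnOmegaInt}), and then settle a purely string-theoretic equivalence. The paper settles that equivalence by quoting \cite[Lemma~7.2]{IndefiniteString}; you propose instead to re-derive it from classical Kac--Krein theory of Krein strings together with the uniqueness part of \cite[Theorem~6.1]{IndefiniteString}. That re-derivation is where there is a genuine gap, in both directions, and it sits exactly at the left endpoint $x=0$ --- the point you dismiss as ``bookkeeping''.

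The condition ``$\omega\ge0$ on $\R$'' translates only into ``$\wt\Wr$ is non-decreasing on the \emph{open} half-line $(0,\infty)$'', because the test functions $h_\Sr(x)=xh(\Sr(x))$ vanish at $x=0$. The string coefficient $\wt\omega$, as a distribution on $H^1_\cc[0,\infty)$, additionally carries an atom $\wt\Wr(0+)\,\delta_0$, which by Remark~\ref{remEumon} is non-positive and may even be $-\infty$ (a non-decreasing $\wt\Wr$ that is square integrable near $0$ can still diverge there). So $(L,\wt\omega,0)$ is in general \emph{not} a Krein string: your ``if'' direction already requires the shift trick $(L,\wt\omega-\wt\omega_0\delta_0,0)$ used in the proof of Corollary~\ref{corsigposS1}, and that trick is unavailable when $\wt\Wr(0+)=-\infty$. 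The same phenomenon breaks the converse. By~\eqref{eqnWTmIntRepZero}, $m\le0$ on $(-\infty,0)$ (as you yourself note), so $m$ is \emph{not} a Stieltjes function; the natural repair is to add the constant $c=\int_0^\infty\lambda^{-1}d\rho(\lambda)$, i.e.\ to remove the endpoint atom, but $c$ is finite only under the equivalent conditions of Corollary~\ref{corsigposS1}, which are strictly stronger than positivity of the spectrum. Measures $\rho\in\SM_0^+$ with $\int_0^\infty\lambda^{-1}d\rho(\lambda)=\infty$ (Lebesgue measure on $[1,\infty)$, say) do occur by Theorem~\ref{thmIP}; for them $m(x)\rightarrow-\infty$ as $x\rightarrow-\infty$, so $m$ differs from every Stieltjes function by more than an additive constant and is realized by no classical Krein string --- the corresponding strings have infinite mass near the left endpoint, outside the classical Kac--Krein bijection. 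Handling precisely these singular strings is what \cite[Lemma~7.2]{IndefiniteString} provides; your argument as written covers only the sub-case $\int_0^\infty\lambda^{-1}d\rho(\lambda)<\infty$.
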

 
 \begin{proof}
   It again suffices to employ the connection with the spectral problem~\eqref{eqntildeString}. 
   Namely, by~\cite[Lemma~7.2]{IndefiniteString}, the spectral measure $\rho$ is supported on $[0,\infty)$ if and only if the measure $\wt{\dip}$ vanishes identically as well as that the function $\wt{\Wr}$ has a non-decreasing  representative.
  Clearly, the measure $\wt{\dip}$ vanishes identically if and only if so does $\dip$. 
 Given some $h\in H^1_{\cc}(\R)$, we define $h_{\Sr}\in H^1_\cc(0,\infty)$ by $h_{\Sr}(x) = x h(\log(x))$ so that one has, similar to~\eqref{eq:WrString}, that
  \begin{align*}
  \omega(h) =  -\int_0^\infty \wt{\Wr}(x) h_\Sr'(x)dx = \wt\omega(h_\Sr),
 \end{align*}
  which shows that $\omega$ is a positive distribution on $\R$ if and only if $\wt{\Wr}$ has a non-decreasing representative.
  Now fix an $x_0\in\R$ such that the derivative $u'(x_0)$ exists and for $k\in\N$ let $h_k\in H_{\cc}^1(\R)$ be the piecewise linear function such that $h_k$ is equal to one on $[-k,x_0-1/k]$, equal to zero outside of $[-k-1,x_0]$ and linear in between. 
 The remaining claims then follow because  
 \begin{align}\label{eqnOmegaInt}
 \begin{split} 
  \int_{-\infty}^{x_0} d\omega & = \lim_{k\rightarrow\infty} \int_\R h_k\, d\omega = \lim_{k\rightarrow\infty} \int_\R u(x) h_k(x)dx + \int_\R u'(x) h_k'(x)dx \\
      & = \int_{-\infty}^{x_0} u(x)dx - u'(x_0).
 \end{split}
 \end{align}
\end{proof} 
 
 \begin{remark}\label{remEumon}
    We have seen that the function $\wt{\Wr}$ has a non-decreasing representative when $\omega$ is a positive Borel measure on $\R$. 
    Because $\wt\Wr$ is square integrable, it follows that this representative is moreover non-positive and tends to zero at $\infty$. 
    This entails that in this case the function 
    \begin{align}\label{eqnEumon}
       \E^{-x}(u(x)+u'(x)) = -\wt\Wr(\E^x)
    \end{align}
    is non-increasing and non-negative on $\R$ with limit zero as $x\rightarrow\infty$. 
\end{remark}
 
  \begin{corollary}\label{corsigposS1}
  If the spectrum $\sigma$ is positive, then the following are equivalent:
  \begin{enumerate}[label=(\roman*), ref=(\roman*), leftmargin=*, widest=iii]
    \item\label{itmsigposi} The spectral measure $\rho$ satisfies 
 \begin{align}\label{eq:growth1rho}
 \int_0^\infty \frac{d\rho(\lambda)}{\lambda} < \infty.
 \end{align}
  \item\label{itmsigposii} The Weyl--Titchmarsh function $m(\I y)$ has a finite limit as $y\rightarrow\infty$. 
  \item\label{itmsigposiii} When $u'$ denotes the unique left-continuous representative, one has 
 \begin{align}\label{eq:CviaU}
   \limsup_{x\to -\infty}\E^{-x}(u(x) + u'(x)) < \infty.
 \end{align}
 \item\label{itmsigposiv} The measure $\omega$ satisfies
  \begin{align}
    \int_{-\infty}^0 \E^{-x} d\omega(x) < \infty. 
  \end{align}
 \end{enumerate}
  In this case, one has the relations
  \begin{align}\label{eqnsigposid}
     \int_0^\infty \frac{d\rho(\lambda)}{\lambda} = - \lim_{y\rightarrow\infty} m(\I y) = \lim_{x\rightarrow-\infty} \E^{-x}(u(x)+u'(x)) = \int_\R \E^{-x}d\omega(x).
  \end{align}
  \end{corollary}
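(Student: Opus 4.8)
The plan is to establish the two ``within-pair'' equivalences (i)$\Leftrightarrow$(ii) and (iii)$\Leftrightarrow$(iv) by fairly direct computations and then to bridge them by identifying the common value of the four quantities in~\eqref{eqnsigposid}, which is where the substance lies. As a preliminary, I would record via Proposition~\ref{propDefinite} that positivity of $\sigma$ forces $\dip\equiv0$ and makes $\omega$ a positive Borel measure whose normalized antiderivative $\wt{\Wr}$ admits a non-decreasing representative; by Remark~\ref{remEumon} this representative is non-positive, tends to $0$ at $\infty$, and satisfies $\E^{-x}(u(x)+u'(x)) = -\wt{\Wr}(\E^x)$. Consequently the monotone limit $c_0 := \lim_{x\to0+}\wt{\Wr}(x)\in[-\infty,0]$ exists, and $\lim_{x\to-\infty}\E^{-x}(u(x)+u'(x)) = -c_0$; in particular the $\limsup$ in~(iii) is automatically a genuine limit.

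For (i)$\Leftrightarrow$(ii) I would set $z=\I y$ in the representation~\eqref{eqnWTmIntRepZero} and separate real and imaginary parts. Since zero is not in $\sigma$, the measure $\rho$ is supported away from zero, the integrand of $-\re m(\I y) = \int_0^\infty \frac{y^2}{\lambda(\lambda^2+y^2)}\,d\rho(\lambda)$ increases monotonically to $1/\lambda$, and monotone convergence gives $-\re m(\I y)\nearrow\int_0^\infty d\rho/\lambda$; meanwhile the elementary bound $\frac{y}{\lambda^2+y^2}\le\frac{1}{2\lambda}$ and dominated convergence show that $\im m(\I y)\to0$ exactly when this integral is finite. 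This yields (i)$\Leftrightarrow$(ii) and the first equality $-\lim_{y\to\infty}m(\I y) = \int_0^\infty d\rho/\lambda$ in~\eqref{eqnsigposid}. For (iii)$\Leftrightarrow$(iv) I would use that, because $\dip\equiv0$ and $u''=u-\omega$, the function $F(x)=\E^{-x}(u(x)+u'(x))$ satisfies $dF = -\E^{-x}\,d\omega$ in the distributional sense, a short computation relying on $u'$ having a left-continuous representative of local bounded variation. Integrating and using $F(x)\to0$ as $x\to\infty$ and $F(x)\to-c_0$ as $x\to-\infty$ then gives $\int_{-\infty}^0\E^{-x}\,d\omega = -c_0 - F(0)$ with $F(0)=u(0)+u'(0)$ finite, together with $\int_\R\E^{-x}\,d\omega = -c_0$; hence (iii)$\Leftrightarrow$(iv) and the last equality in~\eqref{eqnsigposid}.

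The decisive and hardest step is to link the two pairs by proving $\lim_{y\to\infty}m(\I y) = c_0$, that is, that the high-energy limit of $m$ sees precisely the left-endpoint value of $\wt{\Wr}$. Here I would reuse the scaling device from the proof of Proposition~\ref{prop:WTrepr}, but now letting the dilation parameter tend to $0$: with $\wt{\Wr}_r(x)=\wt{\Wr}(rx)$ and $\wt{\dip}_r=0$ one again has $m_r(z)=m(z/r)$, so $m_r(\I)=m(\I/r)$ converges to $\lim_{y\to\infty}m(\I y)$ as $r\to0+$. When $c_0$ is finite, $\wt{\Wr}_r(x)\to c_0$ for each $x>0$ and a change of variables gives $\int_0^x\wt{\Wr}_r\to c_0 x$ and $\int_0^x\wt{\Wr}_r^2\to c_0^2 x$, so by~\cite[Proposition~6.2]{IndefiniteString} the strings converge to the constant string with $\wt{\Wr}\equiv c_0$ and $\wt{\dip}\equiv0$. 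For that string the (now constant) coefficient matrix $A$ in~\eqref{eqnCanSysString} satisfies $A^2=0$, so its transfer matrix is simply $Y(z,x)=I+zx\,A$, and evaluating $-\lim_{x\to\infty}Y_{11}(z,x)/Y_{12}(z,x)$ produces the constant Weyl function $c_0$; continuity of Weyl functions then forces $\lim_{y\to\infty}m(\I y)=c_0$.

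Assembling the three paragraphs yields $\int_0^\infty d\rho/\lambda = -c_0 = \int_\R\E^{-x}\,d\omega$ and hence all of~\eqref{eqnsigposid} whenever these quantities are finite. The only point left is the borderline case $c_0=-\infty$, where one must check that all four conditions fail simultaneously; I would handle it by truncating $\wt{\Wr}$ near $0$ at the finite level $\wt{\Wr}(\delta)$, applying the finite-$c_0$ conclusion to the truncated strings, and letting $\delta\to0+$, with the interchange of the $y\to\infty$ and $\delta\to0+$ limits being the only delicate issue. I expect this identification of $\lim_{y\to\infty}m(\I y)$ with $c_0$ --- rather than the two routine pair-equivalences --- to be the genuine obstacle.
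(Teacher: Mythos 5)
Your preliminary reductions, the two pair-equivalences, and the scaling argument for (iii)$\Rightarrow$(ii) are all essentially correct, and the bridge you build is genuinely different from the paper's. The paper closes the cycle as (i)$\Rightarrow$(ii)$\Rightarrow$(iii)$\Leftrightarrow$(iv) together with (iii)$\Rightarrow$(i), quoting \cite[Theorem~6.1]{AsymCS} for (ii)$\Rightarrow$(iii) (identification of $\lim_{y\to\infty}\re m(\I y)$ with the Ces\`aro mean $\lim_{x\to0}\frac1x\int_0^x\wt{\Wr}$) and Krein-string theory \cite[Proposition~7.3]{IndefiniteString} for (iii)$\Rightarrow$(i); it never proves (iii)$\Rightarrow$(ii) directly. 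Your dilation computation (with $A^2=0$, so $Y(z,x)=I+zxA$ and constant Weyl function $c_0$) is correct and nicely mirrors the $r\to\infty$ scaling in the paper's proof of Proposition~\ref{prop:WTrepr}. One small slip in your first step: the claim that $\im m(\I y)\to0$ holds \emph{exactly} when \eqref{eq:growth1rho} holds is false (e.g.\ point masses $w_n=2^n/n$ at $\lambda_n=2^n$ give $\im m(\I y)\to0$ although $\int d\rho/\lambda=\infty$); this is harmless, since monotone convergence of $-\re m(\I y)$ to $\int d\rho/\lambda$ alone gives (ii)$\Rightarrow$(i), and only the implication ``\eqref{eq:growth1rho} $\Rightarrow\im m(\I y)\to0$'' is needed for (i)$\Rightarrow$(ii).

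The genuine gap is the case $c_0=-\infty$, i.e.\ the missing implication (ii)$\Rightarrow$(iii), and the truncation you propose cannot close it as described. Setting $\wt{\Wr}_\delta=\max(\wt{\Wr},\wt{\Wr}(\delta))$ does give $m_\delta\to m$ locally uniformly and $\lim_{y\to\infty}m_\delta(\I y)=\wt{\Wr}(\delta)\to-\infty$, but locally uniform convergence of Weyl functions only yields the weighted vague convergence of spectral measures as in Proposition~\ref{propCont}, under which the functional $\rho\mapsto\int d\rho/\lambda$ is \emph{not} continuous: mass of $\rho_\delta$ can escape to $\lambda=+\infty$, contributing a fixed amount to $\int d\rho_\delta/\lambda$ but nothing in the limit, and the Fatou-type semicontinuity one does have, $\liminf_\delta\int d\rho_\delta/\lambda\geq\int d\rho/\lambda$, is vacuous against a divergent left-hand side. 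On the Weyl-function side the same obstruction reads: modifying $\wt{\Wr}$ on $(0,\delta)$ is precisely what governs the $y\to\infty$ behavior of $m$, and indeed $\sup_y|m_\delta(\I y)-m(\I y)|\geq|\wt{\Wr}(\delta)-\lim_y m(\I y)|\to\infty$ under your contradiction hypothesis, so the two limits genuinely do not commute. The ``delicate issue'' you flag is therefore not a technicality to be checked; it is the entire content of the missing implication, and it is exactly the point where the paper invokes \cite[Theorem~6.1]{AsymCS}.

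The good news is that your own toolkit closes the gap without truncation. Assume (ii), say $m(\I y)\to L$. The family $m_r(z)=m(z/r)$, $0<r\leq1$, is bounded at $z=\I$, hence normal; any locally uniform subsequential limit is a Herglotz function (possibly a real constant) equal to $L$ at every point $\I y$ of the imaginary axis, hence identically $L$. Thus $m_r\to L$ locally uniformly, and now the \emph{converse} direction of \cite[Proposition~6.2]{IndefiniteString} (it is a two-way statement, used in both directions in Proposition~\ref{propCont}) gives $\frac1r\int_0^{rx}\wt{\Wr}(s)ds\to Lx$, i.e.\ the Ces\`aro mean of $\wt{\Wr}$ at $0+$ equals $L$; monotonicity of $\wt{\Wr}$ upgrades this to $c_0=L$, which is (iii) and also recovers the second identity in \eqref{eqnsigposid}. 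With this replacement (or, alternatively, by quoting \cite[Theorem~6.1]{AsymCS} or \cite[Proposition~7.3]{IndefiniteString} as the paper does), your proof is complete.
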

 
 \begin{proof}
   If condition~\ref{itmsigposi} holds, then $m$ admits the integral representation
 \begin{align*}
  m(z) = -\int_0^\infty \frac{d\rho(\lambda)}{\lambda} + \int_0^\infty \frac{d\rho(\lambda)}{\lambda-z},  
\end{align*}
 from which one sees that condition~\ref{itmsigposii} follows with 
\begin{align*}
  \lim_{y\rightarrow\infty} m(\I y) =  - \int_0^\infty \frac{d\rho(\lambda)}{\lambda}. 
\end{align*}
 In view of \cite[Theorem~6.1]{AsymCS}, condition~\ref{itmsigposii} implies existence of the finite limit 
 \begin{align*}
  \lim_{x\to 0} \frac{1}{x}\int_0^x \wt{\Wr}(s)ds = \lim_{y\rightarrow\infty} \re\,m(\I y). 
\end{align*}
 Using~\eqref{eqnEumon} together with~\eqref{eqnOmegaInt},
 \begin{align*}
  \lim_{x\to 0} \frac{1}{x}\int_0^x \wt{\Wr}(s)ds =  -\lim_{x\rightarrow-\infty}\left(\E^{-x}\int_{-\infty}^x d\omega + \E^{-x}(u(x)+u'(x))\right),
\end{align*} 
 which entails~\eqref{eq:CviaU} and thus condition~\ref{itmsigposiii} because
  \begin{align*}
   \limsup_{x\to -\infty}\E^{-x}(u(x) + u'(x)) \leq \lim_{x\rightarrow-\infty} \left(\E^{-x}\int_{-\infty}^x d\omega + \E^{-x}(u(x)+u'(x))\right) < \infty.
 \end{align*}
  For $x$, $y\in\R$ we get using integration by parts and~\eqref{eqnOmegaInt} that 
  \begin{align}\label{eqnsigposeom}
     \int_x^y \E^{-s}d\omega(s) = \bigl.-\E^{-s}(u(s)+u'(s))\bigr|_{s=x}^y,
  \end{align}
  which proves that condition~\ref{itmsigposiii} is equivalent to condition~\ref{itmsigposiv}. 
  
  Finally, condition~\ref{itmsigposiii} implies that $\wt\Wr$ has a representative that is non-decreasing and bounded from below by a negative constant $-\wt\omega_0$. 
  This means that the corresponding distribution $\wt\omega$ is a real-valued Borel measure on $[0,\infty)$ that is positive on $(0,\infty)$ but with a negative point mass at zero. 
  As this negative point mass is bounded by $\wt\omega_0$, we infer that $\wt\omega+\wt\omega_0\delta_0$, where $\delta_0$ is the unit Dirac measure at zero, is a positive Borel measure on $[0,\infty)$. 
  The generalized indefinite string $(L,\wt\omega+\wt\omega_0\delta_0,0)$ then has the same spectral measure as $(L,\wt\omega,0)$ because adding a point mass at zero to $\wt\omega$ amounts to adding a real constant to the Weyl--Titchmarsh function which does not affect the measure in the integral representation in~\eqref{eqnWTmIntRep}; see~\cite[Remark~2.5]{ISPforCH}.
  Since $(L,\wt\omega+\wt\omega_0\delta_0,0)$ is actually a Krein string, the associated spectral measure $\rho$ satisfies~\eqref{eq:growth1rho} in view of \cite[Proposition~7.3]{IndefiniteString}. 
  For the second identity in~\eqref{eqnsigposid}, it remains to notice that 
  \begin{align*}
    \lim_{x\rightarrow 0} \frac{1}{x}\int_0^x \wt\Wr(s)ds = \lim_{x\rightarrow0} \wt\Wr(x) = - \lim_{x\rightarrow-\infty} \E^{-x}(u(x)+u'(x))
  \end{align*}
  because $\wt\Wr$ is non-decreasing and bounded from below.
  The last identity in~\eqref{eqnsigposid} follows by letting $x\rightarrow-\infty$ and $y\rightarrow\infty$ in~\eqref{eqnsigposeom} and using that 
  \begin{align*}
    \lim_{y\rightarrow\infty} \E^{-y}(u(y)+u'(y)) = -\lim_{y\rightarrow\infty} \wt\Wr(y) = 0,    
  \end{align*}
  which holds because $\wt\Wr$ is square integrable. 
\end{proof} 

  We are next going to characterize various subclasses of $\CHdom$ that give rise to purely discrete spectrum. 
  Because the spectrum is invariant under the conservative Camassa--Holm flow, these conditions on the coefficients are preserved too. 
  
\begin{proposition}\label{prop:Sp-classes}
 The following assertions hold:
\begin{enumerate}[label=(\roman*), ref=(\roman*), leftmargin=*, widest=iii]
  \item\label{prop:Sp-classes-i}  The spectrum $\sigma$ is purely discrete if and only if 
  \begin{align}
    \lim_{x\rightarrow\infty} \E^{x}\biggl(\int_{x}^{\infty}\E^{-s}(u(s) + u'(s))^2ds + \int_{x}^{\infty}\E^{-s}d\dip(s)\biggr) = 0.
  \end{align}
  \item\label{prop:Sp-classes-ii} For each $p>1$, the spectrum $\sigma$ satisfies 
  \begin{align}\label{eqnSinSpCH}
    \sum_{\lambda\in\sigma} \frac{1}{|\lambda|^p} < \infty
  \end{align}
  if and only if  
  \begin{align}\label{eq:umuSp}
    \int_{\R} \E^{\frac{px}{2}}\biggl(\int_{x}^{\infty}\E^{-s}(u(s) + u'(s))^2ds + \int_{x}^{\infty}\E^{-s}d\dip(s)\biggr)^{\nicefrac{p}{2}} dx < \infty.
  \end{align}
  \item\label{itmS2} The spectrum $\sigma$ satisfies~\eqref{eqnSinSpCH} with $p=2$  if and only if the measure $\mu$ is finite. 
  In this case, one has 
  \begin{align}\label{eq:traceS2}
    \frac{1}{2} \sum_{\lambda\in\sigma} \frac{1}{\lambda^2} = \int_\R d\mu = \|u\|^2_{H^1(\R)}  + \int_\R d\dip.
  \end{align}
  \item\label{itmS1} If the spectrum $\sigma$ satisfies~\eqref{eqnSinSpCH} with $p=1$, then 
  \begin{align}
 \int_{\R} \E^{\frac{x}{2}}\biggl(\int_{x}^{\infty}\E^{-s}(u(s) + u'(s))^2ds\biggr)^{\nicefrac{1}{2}} dx < \infty, 
\end{align}
 the function $u + u'$ is integrable with 
\begin{align}\label{eq:traceS1}
 \sum_{\lambda\in\sigma} \frac{1}{\lambda}  = \int_{\R} u(s) + u'(s) ds = \lim_{x\to \infty} \int_{-\infty}^x u(s)ds,
\end{align}
and the measure $\dip$ is singular with respect to the Lebesgue measure.
 \end{enumerate}
\end{proposition}

\begin{proof}
The claims again follow by using the connection with the spectral problem~\eqref{eqntildeString} and by applying the corresponding characterization from \cite[Section~5]{DSpec}. 
Let us only elaborate on~\ref{itmS2} and~\ref{itmS1}.
First of all, by \cite[Theorem~5.2~(iv)]{DSpec},
  \begin{align*}
    \frac{1}{2} \sum_{\lambda\in\sigma} \frac{1}{\lambda^2} = \int_0^\infty x\wt{\Wr}(x)^2 dx + \int_{0}^{\infty} x d\wt{\dip}(x) = \int_\R (u(x) + u'(x))^2dx + \int_\R d\dip,
  \end{align*}
 which in particular implies that $\dip$ is finite and that $u+u' \in L^2(\R)$. 
  Moreover, for all $x$, $y\in\R$ with $x<y$ one has 
  \begin{align*}
  \int_x^y u(s)^2 + u'(s)^2ds & =  \int_x^y (u(s) + u'(s))^2ds - u(y)^2 + u(x)^2 \\
  & \leq \|u+u'\|^2_{L^2(\R)} + u(x)^2,
  \end{align*} 
which shows that $u\in H^1(\R)$ as well as $\|u+u'\|_{L^2(\R)} = \|u\|_{H^1(\R)}$. 
Regarding~\ref{itmS1}, we only add that by~\ref{itmS2}, $u\in H^1(\R)$ and hence $u(x)\to 0$ as $|x|\to \infty$. Moreover, according to \eqref{eqnMdef-}, both $u$ and $u'$ are integrable near $-\infty$.
\end{proof}

\begin{remark}\label{rem:Besov}
Let us mention that the conditions on the function $u$ in Proposition~\ref{prop:Sp-classes} can be viewed from the perspective of Besov spaces. 
More specifically, the condition on $u$ in~\eqref{eq:umuSp} means that the Fourier transform of the corresponding function $\wt{\Wr}$, upon extending $\wt{\Wr}$ to the negative semi-axis by zero, belongs to the Besov space $B_{2\,p}^{\nicefrac{1}{2}}$ (see \cite[Remark on page~64]{AJPR} for example). 
However, it is possible to recover $u$ from $\wt\Wr$ with the help of~\eqref{eqnuitofa}, which is a composition of the Hardy operator and a change of variables.
 It remains unclear to us whether~\eqref{eq:umuSp} can be interpreted as some more familiar norm. 
For local well-posedness of the Camassa--Holm equation in Besov spaces we refer to~\cite{dat}. 
\end{remark}

\begin{remark}
In the recent paper~\cite{rowo20}, Romanov and Woracek obtained a characterization of $2\times2$ canonical systems whose resolvents belong to trace ideals enjoying Matsaev's property (in particular, so are Schatten--von Neumann ideals $\mathfrak{S}_p$ with $p>1$). Using the one-to-one correspondence between canonical systems and  generalized indefinite strings established in~\cite{IndefiniteString}, one may obtain further subclasses of $\CHdom$ with purely discrete spectrum and various summability properties, however, we are not going to pursue this goal here.  
\end{remark}

We finish this section by combining Proposition~\ref{prop:Sp-classes} with Corollary~\ref{corsigposS1}. 
It turns out that, under the additional positivity assumption, the corresponding characterizations in Proposition~\ref{prop:Sp-classes} become much more transparent.

\begin{corollary}\label{cor:Sp-classes}
If the spectrum $\sigma$ is positive and the equivalent conditions in Corollary~\ref{corsigposS1} are satisfied, then the following assertions hold:
\begin{enumerate}[label=(\roman*), ref=(\roman*), leftmargin=*, widest=iii]
  \item\label{itmSpplusi}  The spectrum $\sigma$ is purely discrete if and only if 
  \begin{align}\label{eq:u+u'=0}
    \lim_{x\rightarrow\infty} u(x)+u'(x) = 0.
  \end{align}
  \item\label{itmSpplusii} For each $p>\nicefrac{1}{2}$, the spectrum $\sigma$ satisfies 
  \begin{align}\label{eqnSinSpCHpos}
    \sum_{\lambda\in\sigma} \frac{1}{|\lambda|^p} < \infty
  \end{align}
  if and only if $u+u'\in L^p(\R)$. 
  \item\label{itmSpplusiii} The spectrum $\sigma$ satisfies~\eqref{eqnSinSpCHpos} with $p=1$ if and only if the measure $\omega$ is finite. 
  In this case, one has 
  \begin{align}\label{eqnTF1omega}
   \sum_{\lambda\in\sigma} \frac{1}{\lambda} = \int_\R d\omega.
  \end{align}
  \item\label{itmSpplusiv} If the spectrum $\sigma$ satisfies~\eqref{eqnSinSpCHpos} with $p=\nicefrac{1}{2}$, then the measure $\omega$ 
is singular with respect to the Lebesgue measure.
 \end{enumerate}
\end{corollary}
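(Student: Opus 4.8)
```latex
The plan is to reduce every assertion to the corresponding statement for the
generalized indefinite string $(L,\wt\omega,0)$ (note that $\dip$ vanishes by
Proposition~\ref{propDefinite}, since $\sigma$ is positive) and then to exploit
the extra structure coming from Corollary~\ref{corsigposS1}. The central
observation I would use throughout is contained in Remark~\ref{remEumon}: under
the standing positivity hypothesis the normalized antiderivative $\wt\Wr$ has a
non-decreasing, non-positive representative tending to zero at $\infty$, and
moreover, because the conditions of Corollary~\ref{corsigposS1} hold, $\wt\Wr$ is
bounded from below. Consequently the identity~\eqref{eqnEumon}, namely
$\E^{-x}(u(x)+u'(x)) = -\wt\Wr(\E^x)$, turns the abstract spectral sums into
elementary integrals of $\wt\Wr$, and the factors of $\E^{x}$ that clutter
Proposition~\ref{prop:Sp-classes} disappear after the change of variables
$x = \log t$.

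For part~\ref{itmSpplusi} I would start from the discreteness criterion in
Proposition~\ref{prop:Sp-classes}(i). Using monotonicity of $\wt\Wr$, the tail
quantity $\E^{x}\int_x^\infty \E^{-s}(u+u')^2\,ds$ is squeezed between constant
multiples of $\wt\Wr(\E^x)^2$, so its vanishing at $\infty$ is equivalent to
$\wt\Wr(t)\to0$ as $t\to\infty$; via~\eqref{eqnEumon} this is exactly
$\lim_{x\to\infty}(u(x)+u'(x))=0$. For part~\ref{itmSpplusii} I would similarly
compare the integral in~\eqref{eq:umuSp}: monotonicity lets me replace the inner
tail integral by $\wt\Wr(\E^x)^2$ up to bounded factors, and after substituting
$t=\E^x$ the finiteness of~\eqref{eq:umuSp} becomes finiteness of
$\int_0^\infty |\wt\Wr(t)|^{p}\,dt/t \cdot t = \int |\wt\Wr|^p$ (the precise
bookkeeping of the Jacobian is where I would be careful), which via
$|\wt\Wr(\E^x)| = \E^{-x}|u(x)+u'(x)|$ and a further change of variables
translates into $u+u'\in L^p(\R)$. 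The shift of the exponent from
Proposition~\ref{prop:Sp-classes}'s $p>1$ to the sharper $p>\nicefrac12$ here is
precisely the payoff of monotonicity, since it allows one to dispense with the
Hardy-type loss inherent in the non-monotone case.

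Parts~\ref{itmSpplusiii} and~\ref{itmSpplusiv} are the $p=1$ and
$p=\nicefrac12$ endpoints of~\ref{itmSpplusii}. For~\ref{itmSpplusiii} I would
identify $\int_\R d\omega$ as the total variation of the monotone function
$\wt\Wr$, i.e.\ $\lim_{t\to\infty}\wt\Wr(t) - \lim_{t\to0}\wt\Wr(t)$, which by
Remark~\ref{remEumon} and Corollary~\ref{corsigposS1} equals
$\lim_{x\to-\infty}\E^{-x}(u(x)+u'(x))$; finiteness of $\omega$ is then the same
as $\wt\Wr$ being bounded, equivalently $u+u'\in L^1$ after accounting for the
weight, and the trace formula~\eqref{eqnTF1omega} follows by combining
\eqref{eq:traceS1} of Proposition~\ref{prop:Sp-classes} with the last identity
in~\eqref{eqnsigposid} of Corollary~\ref{corsigposS1}. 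For~\ref{itmSpplusiv} the
singularity of $\omega$ is inherited directly from
Proposition~\ref{prop:Sp-classes}\ref{itmS1}, whose conclusion that $\dip$ is
singular becomes, since $\dip\equiv0$ here, a statement about the
Stieltjes measure of $\wt\Wr$; the absolutely continuous part of $\omega$
corresponds to $\wt\Wr'\in L^1_{\loc}$, and the $p=\nicefrac12$ summability forces
this part to vanish by the same Besov-space / canonical-system estimates invoked
in the proof of Proposition~\ref{prop:Sp-classes}.

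The step I expect to be the main obstacle is the sharp bookkeeping in
part~\ref{itmSpplusii}: one must verify that the comparison between the tail
integral $\int_x^\infty\E^{-s}(u+u')^2\,ds$ and the pointwise value
$\wt\Wr(\E^x)^2$ is two-sided with constants independent of $x$, which relies
crucially on the monotonicity from Remark~\ref{remEumon} but also on controlling
the contribution near the endpoint $t\to0$ where $\wt\Wr$ is merely square
integrable rather than bounded. Once this uniform comparison is in hand, the
change of variables $t=\E^x$ and the conversion between the $L^p$ norm of
$\wt\Wr$ and that of $u+u'$ are routine, and the remaining parts follow by
specialization.
```
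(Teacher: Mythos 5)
Your reduction to the string picture via Remark~\ref{remEumon} is the right starting point, and two pieces of your argument are sound: part~\ref{itmSpplusi}, and part~\ref{itmSpplusii} in the range $p>1$, since there Proposition~\ref{prop:Sp-classes} supplies the spectral criterion and monotonicity indeed makes the tail quantity comparable to $\bigl(\E^{x}\,|\wt\Wr(\E^{x})|\bigr)^{2}=(u(x)+u'(x))^{2}$ (note it is $\E^{x}\wt\Wr(\E^{x})$, not $\wt\Wr(\E^{x})$, that must tend to zero). The genuine gap is in the range $\nicefrac{1}{2}<p\le 1$ of part~\ref{itmSpplusii} and in parts~\ref{itmSpplusiii} and~\ref{itmSpplusiv}: monotonicity relates coefficient conditions to one another, but it cannot create the missing link between the \emph{spectrum} and the coefficients, because Proposition~\ref{prop:Sp-classes} says nothing about $\sum_{\lambda\in\sigma}|\lambda|^{-p}$ for $p\le 1$ beyond its item~\ref{itmS1}, which concerns $p=1$ and the singularity of $\dip$ --- a vacuous statement here since $\dip\equiv 0$; it says nothing about $\omega$, and nothing at all about $p=\nicefrac{1}{2}$. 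The paper supplies the missing link by passing to the Krein string $(L,\wt\omega-\wt\omega_0\delta_0,0)$ (positivity plus the lower bound $\wt\Wr\ge\wt\omega_0$ from Corollary~\ref{corsigposS1} make this a genuine Krein string with the same spectrum) and invoking the Krein-string theorem \cite[Theorem~5.4]{DSpec}, whose criteria are stated directly in terms of $x\int_x^\infty d\wt\omega=-x\wt\Wr(x)=u(\log x)+u'(\log x)$ and are valid for all $p>\nicefrac{1}{2}$, including the $p=\nicefrac{1}{2}$ singularity statement of part~\ref{itmSpplusiv}. Your proposal never invokes this theorem or any substitute, so ``the payoff of monotonicity'' remains an unproven assertion exactly where it is needed; the same objection applies to your appeal, for part~\ref{itmSpplusiv}, to the ``estimates invoked in the proof of Proposition~\ref{prop:Sp-classes}'', which hold only for $p>1$.

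Part~\ref{itmSpplusiii} contains a second, independent error: $\int_\R d\omega$ is \emph{not} the total variation of $\wt\Wr$. From the correspondence $\omega(h)=\wt\omega(h_\Sr)$ with $h_\Sr(x)=xh(\log x)$ one gets $\int_\R d\omega=\int_0^\infty x\,d\wt\omega(x)=\int_0^\infty|\wt\Wr(t)|\,dt=\|u+u'\|_{L^1(\R)}$, whereas the total mass of $\wt\omega$ equals the \emph{weighted} quantity $\int_\R\E^{-x}d\omega(x)$ appearing in~\eqref{eqnsigposid}. Consequently your claim that ``finiteness of $\omega$ is the same as $\wt\Wr$ being bounded'' is false --- under the standing hypotheses $\wt\Wr$ is \emph{always} bounded (between $\wt\omega_0$ and $0$), while $\omega$ need not be finite --- and combining \eqref{eq:traceS1} with \eqref{eqnsigposid} cannot yield \eqref{eqnTF1omega}, since \eqref{eqnsigposid} concerns the wrong integral. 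The paper's route is: for the forward direction, use \eqref{eqnOmegaInt}, i.e.\ $\int_{-\infty}^x d\omega=\int_{-\infty}^x u(s)\,ds+u(x)-(u(x)+u'(x))$, and let $x\to\infty$ using $u\in H^1(\R)$ and \eqref{eq:traceS1} from Proposition~\ref{prop:Sp-classes} together with \eqref{eq:u+u'=0}; for the converse ($\omega$ finite implies summability) one needs Mercer's theorem \cite[Proposition~3.3]{IsospecCH}, which your proposal replaces by nothing --- your implicit substitute would be part~\ref{itmSpplusii} at $p=1$, which lies precisely in the gapped range.
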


\begin{proof}
  Our assumptions ensure that $\wt\Wr$ has a representative that is non-decreasing and bounded from below by a constant $\wt\omega_0$. 
  The claims in~\ref{itmSpplusi},~\ref{itmSpplusii} and~\ref{itmSpplusiv} then follow by applying~\cite[Theorem~5.4]{DSpec} to the Krein string $(L,\wt\omega-\wt\omega_0\delta_0,0)$, which has the same spectrum as $(L,\wt\omega,0)$. 
  Here, we also use that $\wt{\Wr}$ is non-positive with limit zero at infinity, from which we infer that 
\begin{align*}
x\int_{x}^\infty d\wt{\omega} = - x\wt{\Wr}(x) = u(\log x) + u'(\log x).
\end{align*}
 Furthermore, we have seen in~\eqref{eqnOmegaInt} that 
 \begin{align*}
   \int_{-\infty}^x d\omega = \int_{-\infty}^x u(s)ds + u(x) - (u(x)+u'(x)).
 \end{align*}
 Using Proposition~\ref{prop:Sp-classes}~\ref{itmS1} and~\ref{itmS2} as well as~\eqref{eq:u+u'=0}, this shows that the measure $\omega$ is finite with~\eqref{eqnTF1omega} when the spectrum $\sigma$ satisfies~\eqref{eqnSinSpCHpos} with $p=1$. 
 The converse in~\ref{itmSpplusiii} follows from Mercer's theorem; see~\cite[Proposition~3.3]{IsospecCH}.
 \end{proof}

\begin{remark}
  If the spectrum $\sigma$ is positive, then we get from~\eqref{eqnsigposeom} that 
  \begin{align}
     u(x)+u'(x) = \E^x \int_x^\infty \E^{-s}d\omega(s).
  \end{align}
\end{remark}

\begin{remark}\label{remNotKreinReg}
 The assumption in Corollary~\ref{cor:Sp-classes} that the equivalent conditions in Corollary~\ref{corsigposS1} are satisfied is actually not necessary for most (but not all) of the claims. 
More precisely, this applies to those claims for which the conditions on the spectrum as well as the coefficients only depend on the asymptotic behavior near $\infty$, so that one can employ a standard interval splitting argument (Glazman's decomposition principle). 
\end{remark}

 \section{The spectral transform}\label{secIST}

We are now going to consider the {\em (direct) spectral transform} $(u,\mu)\mapsto\rho$ on $\CHdom$, where $\rho$ is the spectral measure as defined in Section~\ref{secSP}.  
 To this end, let us introduce the set $\SM_0$ of all positive Borel measures $\rho_0$ on $\R$ satisfying
 \begin{align}\label{eq:SM2}
 \int_\R \frac{d\rho_0(\lambda)}{1+\lambda^2} < \infty
\end{align}
 and whose topological support does not contain zero. 
   
 \begin{theorem}\label{thmIP}
   The mapping $(u,\mu)\mapsto\rho$ is a bijection from $\CHdom$ to $\SM_0$. 
 \end{theorem}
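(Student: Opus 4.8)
The plan is to prove bijectivity of the spectral transform $(u,\mu)\mapsto\rho$ by factoring it through the bijection of Lemma~\ref{lemumuwdip} and reducing everything to the known inverse spectral theory of generalized indefinite strings. Recall that Lemma~\ref{lemumuwdip} gives a bijection $(u,\mu)\mapsto(\wt\Wr,\wt\dip)$ between $\CHdom$ and $\String_0$, and that the computations in the proof of Theorem~\ref{thmThetaPhi} identify the Weyl--Titchmarsh function $m$ of the pair $(u,\mu)$ with that of the generalized indefinite string $(L,\wt\omega,\wt\dip)$ with $L=\infty$, where $\wt\Wr$ is the normalized antiderivative of $\wt\omega$. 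Consequently the spectral measure $\rho$ attached to $(u,\mu)$ coincides with the spectral measure of the string $(\infty,\wt\omega,\wt\dip)$. The strategy is therefore to invoke the fundamental bijection of the inverse spectral theory for generalized indefinite strings from~\cite{IndefiniteString}, which asserts that the map sending a generalized indefinite string to its spectral measure is a bijection onto the set of all positive Poisson-integrable Borel measures, and then to check that the subclass $\String_0$ corresponds precisely to $\SM_0$ under this correspondence.

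The key steps, in order, would be as follows. First, I would record that the composite map $(u,\mu)\mapsto(\wt\Wr,\wt\dip)\mapsto\rho$ equals the spectral transform, by the identification of $m$-functions noted above. Second, I would quote the global inverse spectral result for generalized indefinite strings establishing that $(\wt\Wr,\wt\dip)\mapsto\rho$ is a bijection from the set of all generalized indefinite strings onto the set of all measures satisfying the Poisson condition~\eqref{eq:SM2}. Third, and this is where the real content lies, I would show that the extra structural constraints built into $\String_0$---namely that $\wt\Wr\in L^2(0,\infty)$, that $\wt\dip$ is a \emph{finite} measure with no point mass at zero, and that the asymptotic growth condition~\eqref{eqnMdefTilde} holds---translate under the spectral correspondence into exactly the defining condition of $\SM_0$, that the topological support of $\rho$ has a gap around zero (equivalently $\lambda_0(\sigma)>0$, with zero not in $\sigma$).

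For this third step I would argue in both directions using results already in the excerpt. In the forward direction, Proposition~\ref{prop:WTrepr} already shows that zero does not belong to $\sigma$ whenever $(u,\mu)\in\CHdom$, so the spectral measure of any string in $\String_0$ indeed lies in $\SM_0$. In the reverse direction, given $\rho\in\SM_0$, the inverse spectral theory of generalized indefinite strings produces a unique string $(L,\wt\omega,\wt\dip)$ with spectral measure $\rho$; one must verify it lies in $\String_0$. Here I would use the quantitative spectral-gap estimate of Proposition~\ref{prop:SpecGap}: since $\lambda_0(\sigma)>0$ for $\rho\in\SM_0$, the estimate $\wt E(\wt\Wr,\wt\dip)=E(u,\mu)\le \sqrt{2}/\lambda_0(\sigma)<\infty$ forces the growth condition~\eqref{eqnMdefTilde}, which in turn yields $\wt\Wr\in L^2(0,\infty)$, finiteness of $\wt\dip$, and absence of mass at zero, so that $(\wt\Wr,\wt\dip)\in\String_0$. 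Transporting back through Lemma~\ref{lemumuwdip} then produces the pair $(u,\mu)\in\CHdom$, and uniqueness follows from injectivity of both the string correspondence and Lemma~\ref{lemumuwdip}.

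The main obstacle I anticipate is the precise matching of the regularity/finiteness conditions defining $\String_0$ with the single gap condition defining $\SM_0$; one must ensure that no string outside $\String_0$ can produce a measure in $\SM_0$ and conversely, which amounts to confirming that the spectral-gap characterization---zero isolated from $\sigma$---is genuinely \emph{equivalent} to the conjunction of square-integrability of $\wt\Wr$, finiteness of $\wt\dip$, and~\eqref{eqnMdefTilde}, rather than merely implied in one direction. The two-sided estimate in Proposition~\ref{prop:SpecGap} is exactly the tool that closes this equivalence, so the argument hinges on deploying it in tandem with the general inverse spectral bijection for generalized indefinite strings.
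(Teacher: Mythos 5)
Your overall architecture matches the paper's proof (factor through Lemma~\ref{lemumuwdip} and reduce to the inverse spectral theory of generalized indefinite strings), but it rests on a false statement of the key external theorem, and this is a genuine gap. The inverse spectral theorem of~\cite{IndefiniteString} is \emph{not} a bijection between generalized indefinite strings and Poisson-integrable measures; it is a bijection between strings and Herglotz--Nevanlinna \emph{functions} $m$. Passing from $m$ to the measure $\rho$ forgets the constants $c_1\geq 0$ and $c_2\in\R$ in the representation~\eqref{eqnWTmIntRep}, so the map $(\wt\Wr,\wt\dip)\mapsto\rho$ on the set of all strings is badly non-injective. The paper itself exploits this: in the proof of Corollary~\ref{corsigposS1}, the strings $(L,\wt\omega-\wt\omega_0\delta_0,0)$ and $(L,\wt\omega,0)$ are different but have the same spectral measure, and by \cite[Lemma~7.1]{IndefiniteString} the constant $c_1$ equals the point mass $\wt\dip(\{0\})$, so altering that mass also preserves $\rho$. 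Consequently your injectivity argument (``uniqueness follows from injectivity of the string correspondence'') fails, and your surjectivity step fails too: given $\rho\in\SM_0$ alone, the inverse theory cannot ``produce a unique string with spectral measure $\rho$'', because existence requires a Herglotz--Nevanlinna function as input, not a measure, and different admissible choices of $(c_1,c_2)$ yield genuinely different strings.

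What closes this gap is precisely the part of Proposition~\ref{prop:WTrepr} that your proposal under-uses: not merely the statement $0\notin\sigma$, but the representation~\eqref{eqnWTmIntRepZero}, which shows that for pairs in $\CHdom$ the constants are pinned down ($c_1=0$ and $m(0)=0$), so that on this subclass $\rho$ \emph{does} determine $m$. Injectivity then follows from the uniqueness part of \cite[Theorem~6.1]{IndefiniteString} (the string is determined by $m$) together with Lemma~\ref{lemumuwdip}. For surjectivity one must first \emph{define} $m$ from a given $\rho\in\SM_0$ by formula~\eqref{eqnWTmIntRepZero}, apply the existence part of \cite[Theorem~6.1]{IndefiniteString} to this particular $m$, and then verify that the resulting string satisfies~\eqref{eqnMdefTilde}; the paper does this via \cite[Theorem~5.2~(i)]{DSpec}, which is an equivalence at the level of arbitrary strings. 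Note that your substitute for this last step, Proposition~\ref{prop:SpecGap}, is stated only for pairs already in $\CHdom$ (equivalently, strings already in $\String_0$), so invoking it for the string produced by the inverse theory is circular unless you upgrade it to the string-level two-sided estimate from~\cite{DSpec}. Once these repairs are made, your argument coincides with the paper's proof.
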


 \begin{proof}
  Since the spectral measure $\rho$ uniquely determines the Weyl--Titchmarsh function $m$ in view of Proposition~\ref{prop:WTrepr}, the uniqueness part in \cite[Theorem~6.1]{IndefiniteString} shows that the function $\wt{\Wr}$ and the measure $\wt{\dip}$ are uniquely determined as well. 
  It remains to apply Lemma~\ref{lemumuwdip} to see that the mapping in the claim is injective.
  
 In order to prove surjectivity, let $\rho$ be a measure in $\SM_0$ and introduce the Herglotz--Nevanlinna function (motivated by Proposition~\ref{prop:WTrepr})  
   \begin{align*}
    m(z) =  \int_{\R} \frac{z}{\lambda(\lambda-z)} d \rho(\lambda).
   \end{align*}
  From the existence part of \cite[Theorem~6.1]{IndefiniteString}, we obtain a real-valued and locally square integrable function $\wt{\Wr}$ on $[0,\infty)$ and a positive Borel measure $\wt{\dip}$ on $[0,\infty)$ with $\wt{\dip}(\lbrace0\rbrace)=0$ such that the function $m$ is the Weyl--Titchmarsh function for the system~\eqref{eqnCanSysString}, that is, if $Y(z,\redot)$ denotes the unique solution of the integral equation~\eqref{eqnCanSysString} for each $z\in\C$, then the function $m$ is given by
 \begin{align*}
  m(z) = -\lim_{x \rightarrow\infty} \frac{Y_{11}(z, x)}{Y_{12}(z, x)}, \quad z\in\C\backslash\R. 
 \end{align*}
 However, since the support of $\rho$ does not contain zero, the function $m$ is analytic at zero. 
 By~\cite[Theorem~5.2~(i)]{DSpec}, condition~\eqref{eqnMdefTilde} is satisfied and hence the pair $(\wt\Wr,\wt\dip)$ belongs to $\String_0$. 
 It remains to apply Lemma~\ref{lemumuwdip} once again to conclude that the corresponding pair $(u,\mu)$, constructed by means of~\eqref{eqnuitofa} and~\eqref{eqnDefbeta}, belongs to $\CHdom$ and has $\rho$ as its corresponding spectral measure.  
 \end{proof}

It is not difficult to see that bijectivity of the spectral transform is preserved upon restricting to suitable subclasses of $\CHdom$ and $\SM_0$, in particular to ones that arise from the characterizations of properties in the previous section. 
Let us provide here only one such example. 
Namely, motivated by Proposition~\ref{propDefinite}, we introduce the following two sets:
\begin{enumerate}[label=\textbullet, leftmargin=*]
\item $\CHdom^+$ consisting of all pairs $(u,\mu)$ from $\CHdom$ such that $\omega$ is a positive Borel measure on $\R$ and $\dip$ vanishes identically. 
\item $\SM_0^+$ consisting of all measures from $\SM_0$  whose support is contained in $(0,\infty)$. 
\end{enumerate}
Combining Theorem~\ref{thmIP} with  Proposition~\ref{propDefinite}, we immediately end up with the following result.
   
 \begin{corollary}\label{corIP+}
 The mapping $(u,\mu)\mapsto \rho$ is a bijection from $\CHdom^+$ to $\SM_0^+$. 
 \end{corollary}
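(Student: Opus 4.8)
The plan is to deduce Corollary~\ref{corIP+} directly from the bijection in Theorem~\ref{thmIP} by intersecting both sides with the appropriate subclasses and invoking the spectral characterization of positivity from Proposition~\ref{propDefinite}. Since Theorem~\ref{thmIP} already establishes that $(u,\mu)\mapsto\rho$ is a bijection from $\CHdom$ onto $\SM_0$, it suffices to show that this bijection carries the subset $\CHdom^+\subseteq\CHdom$ exactly onto the subset $\SM_0^+\subseteq\SM_0$; the restriction of a bijection to a subset is automatically a bijection onto the image of that subset, so the only real content is the identity of images.

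First I would recall that the spectrum $\sigma$ is, by Definition~\ref{def:Spectrum}, precisely the topological support of the spectral measure $\rho$. Hence, for a pair $(u,\mu)$ in $\CHdom$ with associated measure $\rho$, the statement ``$\supp(\rho)\subseteq(0,\infty)$'' is literally the same as ``the spectrum $\sigma$ is positive'' (bearing in mind that Proposition~\ref{prop:WTrepr} already guarantees $0\notin\sigma$, so positivity of $\sigma$ means $\sigma\subseteq(0,\infty)$). This identifies membership in $\SM_0^+$ with positivity of the spectrum on the measure side.

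Next I would invoke Proposition~\ref{propDefinite}, which asserts that the spectrum $\sigma$ is positive if and only if the measure $\dip$ vanishes identically and the distribution $\omega$ is a positive Borel measure on $\R$. By the very definition of $\CHdom^+$, these two conditions are exactly what characterizes membership in $\CHdom^+$. Chaining the two equivalences gives, for any $(u,\mu)\in\CHdom$ with image $\rho$ under the spectral transform,
\begin{align*}
 (u,\mu)\in\CHdom^+ \iff \sigma \text{ is positive} \iff \rho\in\SM_0^+.
\end{align*}
This shows that the bijection of Theorem~\ref{thmIP} maps $\CHdom^+$ onto $\SM_0^+$ and its complement in $\CHdom$ onto the complement of $\SM_0^+$ in $\SM_0$, so its restriction to $\CHdom^+$ is a bijection onto $\SM_0^+$, as claimed.

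I do not expect any genuine obstacle here, since all the work has been front-loaded into Theorem~\ref{thmIP} and Proposition~\ref{propDefinite}. The only point demanding a little care is the bookkeeping between the two notions of positivity: one must note explicitly that $0\notin\sigma$ always holds by Proposition~\ref{prop:WTrepr}, so that ``$\sigma$ positive'' and ``$\supp(\rho)\subseteq(0,\infty)$'' coincide without any ambiguity at the origin, and that this matches the defining condition of $\SM_0^+$. Everything else is a formal restriction-of-a-bijection argument.
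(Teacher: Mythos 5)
Your proposal is correct and is exactly the paper's argument: the paper also obtains Corollary~\ref{corIP+} by combining the bijection of Theorem~\ref{thmIP} with the characterization in Proposition~\ref{propDefinite}, stating that the result follows immediately. Your extra remark that $0\notin\sigma$ (Proposition~\ref{prop:WTrepr}) makes ``positive spectrum'' and ``support in $(0,\infty)$'' coincide is a fine point the paper leaves implicit.
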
 
 
 The set $\CHdom^+$ indeed admits a very transparent description in terms of $u$.
  
\begin{lemma}\label{lem:D+}
The mapping $(u,\mu)\mapsto u$ is a bijection between $\CHdom^+$ and the set of all real-valued functions $u$ in $H^1_{\loc}(\R)$ such that the distribution $u-u''$ is a positive Borel measure on $\R$, the function $u+u'$ belongs to $L^\infty(\R)$ and 
   \begin{align}
     \int_{-\infty}^0 \E^{-x}\bigl(u(x)^2 + u'(x)^2\bigr) dx <\infty.
   \end{align} 
 Moreover, for pairs $(u,\mu)$ in $\CHdom^+$ one has the estimates 
\begin{align}\label{eq:SpecGapviau+u'}
\frac{1}{6\lambda_0(\sigma)} \le \|u+u'\|_{L^\infty(\R)} \le \frac{2\sqrt{2}}{\lambda_0(\sigma)}.
\end{align}
\end{lemma}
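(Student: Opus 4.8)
The plan is to prove Lemma~\ref{lem:D+} in two parts: first establishing the characterization of $\CHdom^+$ via the map $(u,\mu)\mapsto u$, and then deriving the two-sided estimate~\eqref{eq:SpecGapviau+u'}. For the characterization, recall that Proposition~\ref{propDefinite} already tells us that $\sigma$ is positive precisely when $\dip$ vanishes and $\omega=u-u''$ is a positive Borel measure. Since $\dip\equiv 0$, the measure $\mu$ is completely determined by $u$ via~\eqref{eqndipdef}, so $(u,\mu)\mapsto u$ is automatically injective on $\CHdom^+$; the content lies in identifying the image. First I would check that membership in $\CHdom^+$ forces the three listed conditions on $u$: positivity of $u-u''$ is immediate, the decay condition near $-\infty$ is exactly the $u$-part of~\eqref{eqnMdef-} (with $\dip=0$), and boundedness of $u+u'$ should follow from Remark~\ref{remEumon}, which shows that $\E^{-x}(u+u')$ is non-increasing and non-negative with limit zero at $+\infty$; combined with~\eqref{eqnMdef+} this yields $u+u'\in L^\infty(\R)$. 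Conversely, given any $u$ satisfying the three conditions, I would set $\dip\equiv 0$, define $\mu$ by $d\mu=(u^2+u'^2)\,dx$, and verify that $(u,\mu)\in\CHdom^+$, the only nontrivial point being that~\eqref{eqnMdef+} holds; but boundedness of $u+u'$ makes the integral $\int_x^\infty \E^{-s}(u+u')^2\,ds$ comparable to $\E^{-x}$, so $\E^x$ times it stays bounded.

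For the estimate~\eqref{eq:SpecGapviau+u'}, the natural route is to combine the already-proven spectral-gap bounds of Proposition~\ref{prop:SpecGap} with a comparison between $E(u,\mu)$ and $\|u+u'\|_{L^\infty(\R)}$ in the positive case. Since $\dip\equiv 0$ here, the quantity simplifies to
\begin{align*}
E(u,\mu) = \sup_{x\in\R}\,\E^{\frac{x}{2}}\biggl(\int_x^\infty \E^{-s}(u(s)+u'(s))^2\,ds\biggr)^{\nicefrac{1}{2}}.
\end{align*}
By Remark~\ref{remEumon}, the function $g(x):=\E^{-x}(u(x)+u'(x))$ is non-negative and non-increasing. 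Writing $(u+u')^2 = \E^{2s}g(s)^2$ and using monotonicity of $g$, I would bound the inner integral from above and below. For the upper bound, $g(s)\le g(x)$ for $s\ge x$ gives $\int_x^\infty \E^{-s}(u+u')^2\,ds = \int_x^\infty \E^{s}g(s)^2\,ds \le g(x)^2\int_x^\infty \E^{s}\,ds$, which diverges—so instead I must exploit that $g(s)\le \E^{-s}\|u+u'\|_\infty$ directly, giving $\int_x^\infty \E^{-s}(u+u')^2\,ds \le \|u+u'\|_\infty^2\int_x^\infty \E^{-s}\,ds=\|u+u'\|_\infty^2\,\E^{-x}$, whence $E(u,\mu)\le\|u+u'\|_\infty$. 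For the lower bound, I would use $(u(s)+u'(s))^2\ge \E^{2s}g(s)^2$ and the fact that $g$ is non-increasing to estimate $\int_x^\infty \E^{-s}(u+u')^2\,ds \ge g(x)^2\int_x^{x'} \E^{s}\,ds$ for a suitable $x'>x$, extracting a constant multiple of $\E^x g(x)^2 = \E^{-x}(u(x)+u'(x))^2$, and then taking the supremum to get $E(u,\mu)\ge c\,\|u+u'\|_\infty$ for an explicit $c$.

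Carrying through these comparisons should yield $\tfrac{1}{2}\|u+u'\|_\infty \le E(u,\mu)\le \|u+u'\|_\infty$, or bounds of this flavor with explicit constants, which when inserted into $\tfrac{1}{6\lambda_0(\sigma)}\le E(u,\mu)\le \tfrac{\sqrt{2}}{\lambda_0(\sigma)}$ from Proposition~\ref{prop:SpecGap} produce~\eqref{eq:SpecGapviau+u'} after tracking the constant $2\sqrt{2}$. The main obstacle I anticipate is pinning down the sharp comparison constants between $E(u,\mu)$ and $\|u+u'\|_{L^\infty}$: the upper estimate is clean, but the lower bound requires choosing the integration window $[x,x']$ carefully to balance the exponential growth of $\E^s$ against the decay afforded by $g$ being non-increasing, and it is precisely this constant that propagates into the factor $2\sqrt 2$ rather than $\sqrt 2$ in~\eqref{eq:SpecGapviau+u'}. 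Care is also needed because the supremum defining $\|u+u'\|_\infty$ is an essential supremum while $g$'s monotone representative must be used consistently; invoking Remark~\ref{remEumon} to fix the good representative should resolve this cleanly.
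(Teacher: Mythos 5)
Your overall strategy is the same as the paper's: reduce the characterization to Proposition~\ref{propDefinite} and Remark~\ref{remEumon}, prove a two-sided comparison between $E(u,\mu)$ and $\|u+u'\|_{L^\infty(\R)}$ (with constants $\tfrac12$ and $1$), and combine it with Proposition~\ref{prop:SpecGap} to get~\eqref{eq:SpecGapviau+u'}. The upper bound $E(u,\mu)\le\|u+u'\|_{L^\infty(\R)}$ and the treatment of the bijection are correct and match the paper. However, your lower-bound step contains a genuine error: the monotonicity inequality is applied in the wrong direction. Writing $g(x)=\E^{-x}(u(x)+u'(x))$, Remark~\ref{remEumon} gives that $g$ is \emph{non-increasing}, so on a window $[x,x']$ one has $g(s)\le g(x)$ and $g(s)\ge g(x')$; your claimed bound
\begin{align*}
\int_x^{x'}\E^{s}g(s)^2\,ds \;\ge\; g(x)^2\int_x^{x'}\E^{s}\,ds
\end{align*}
is therefore false in general (take $g(s)=\E^{-s}$, $x=0$, $x'=\log 2$: the left side is $\tfrac12$, the right side is $1$). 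Worse, if it were true it would yield $E(u,\mu)\ge\|u+u'\|_{L^\infty(\R)}$, i.e.\ equality with the upper bound, which is more than can hold. Note that this error also infects the characterization half of your argument, since your claim that $(u,\mu)\in\CHdom^+$ forces $u+u'\in L^\infty(\R)$ rests on precisely this lower bound (finiteness of $E(u,\mu)$ alone says nothing about $\|u+u'\|_\infty$ without it).

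The repair is exactly what the paper does: pull out the value of $g$ at the \emph{right} endpoint of the window and optimize over the \emph{left} endpoint. For any $x\le x_0$,
\begin{align*}
\E^{x}\int_x^{x_0}\E^{-s}(u(s)+u'(s))^2\,ds \;=\; \E^{x}\int_x^{x_0}\E^{s}g(s)^2\,ds \;\ge\; g(x_0)^2\,\E^{x}\bigl(\E^{x_0}-\E^{x}\bigr),
\end{align*}
and maximizing the right-hand side over $x\le x_0$ (the maximum occurs at $\E^{x}=\tfrac12\E^{x_0}$) gives $\tfrac14\,g(x_0)^2\E^{2x_0}=\tfrac14(u(x_0)+u'(x_0))^2$. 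Taking the supremum over $x_0$ yields $E(u,\mu)\ge\tfrac12\|u+u'\|_{L^\infty(\R)}$, which is the constant you anticipated and which, inserted into $\tfrac{1}{6\lambda_0(\sigma)}\le E(u,\mu)\le\tfrac{\sqrt2}{\lambda_0(\sigma)}$, produces the factor $2\sqrt2$ in~\eqref{eq:SpecGapviau+u'}. With this one step corrected, your proof coincides with the paper's.
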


\begin{proof}
Clearly, each pair $(u,\mu)$ in $\CHdom^+$ is uniquely determined by the function $u$ alone since the measure $\dip$ vanishes identically. 
Then bijectivity follows readily from Lemma~\ref{lem:umuEST} and Proposition~\ref{propDefinite}, except for the fact that the function $u+u'$ belongs to $L^\infty(\R)$ for every pair $(u,\mu)$ in $\CHdom^+$. 
However, the latter is immediate from the two-sided estimate 
  \begin{align}\label{eq:estEviau+u'}
   \frac{1}{2} \|u+u'\|_{L^\infty(\R)} \leq E(u,\mu)  \leq \|u+u'\|_{L^\infty(\R)}.
  \end{align}
The second inequality is trivial and the first one follows from condition~\eqref{eqnMdef+} by taking into account that the function~\eqref{eqnEumon} in Remark~\ref{remEumon} is non-increasing and non-negative in this case. 
 Indeed, by monotonicity we get
   \begin{align*}
   E(u,\mu)^2 & = \sup_{x\in\R} \E^{x}\int_x^\infty \E^{-s}(u(s)+u'(s))^2ds \\
   & \ge \sup_{x \leq x_0} \E^{x}\int_x^{x_0} \E^{s} \frac{(u(x_0)+u'(x_0))^2}{\E^{2x_0}}ds  = \frac{1}{4} (u(x_0)+u'(x_0))^2
   \end{align*}
   for all  $x_0\in\R$, which implies the desired estimate. 
   Finally, the estimates in~\eqref{eq:SpecGapviau+u'} are a straightforward combination of~\eqref{eq:estEviau+u'} with~\eqref{eq:lam0est}.  
\end{proof}
 
 \begin{remark}
 The constants in~\eqref{eq:SpecGapviau+u'} are not optimal and can be improved by using the connections with Krein strings and the spectral gap estimates from~\cite{kakr58}, \cite{muc}.
 \end{remark}

 We continue this section by establishing a continuity property for the spectral transform.   
 In order to state it, let $(u,\mu)$ be a pair in $\CHdom$ and $(u_k,\mu_k)$ be a sequence of pairs in $\CHdom$.
 All quantities corresponding to $(u,\mu)$ will be denoted as in Section~\ref{secSP} and the ones corresponding to $(u_k,\mu_k)$ with an additional subscript. 
 
  \begin{proposition}\label{propCont}
  The following are equivalent:
    \begin{enumerate}[label=(\roman*), ref=(\roman*), leftmargin=*, widest=iii]
    \item The pairs $(u_k,\mu_k)$ converge to $(u,\mu)$ in $\CHdom$. 
    \item\label{itmContii} The Weyl--Titchmarsh functions $m_k$ converge locally uniformly to $m$. 
    \item\label{itmContiii} For all continuous functions $\chi\in C(\R)$ such that the limit of $\chi(\lambda)$ as $|\lambda|\rightarrow\infty$ exists and is finite one has
  \begin{align}\label{eqnContSD2}
  \int_{\R} \frac{\chi(\lambda)}{1+\lambda^2} d\rho_k(\lambda) & \rightarrow \int_{\R} \frac{\chi(\lambda)}{1+\lambda^2} d\rho(\lambda), & \int_{\R} \frac{d\rho_k(\lambda)}{\lambda(1+\lambda^2)}  & \rightarrow \int_{\R} \frac{d\rho(\lambda)}{\lambda(1+\lambda^2)}.
 \end{align}
   \end{enumerate}
   \end{proposition}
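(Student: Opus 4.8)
The plan is to transport the entire statement to the associated generalized indefinite strings and then invoke two pieces of machinery that the excerpt has already prepared: the continuity theorem for generalized indefinite strings from~\cite{IndefiniteString} and the classical correspondence between Herglotz--Nevanlinna functions and their representing measures. I would prove the two equivalences (i)\,$\Leftrightarrow$\,(ii) and (ii)\,$\Leftrightarrow$\,(iii) separately.

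For (i)\,$\Leftrightarrow$\,(ii), recall from the proof of Proposition~\ref{prop:WTrepr} that the Weyl--Titchmarsh function $m$ attached to $(u,\mu)$ coincides with that of the generalized indefinite string $(\infty,\wt\omega,\wt\dip)$, and likewise $m_k$ corresponds to $(\infty,\wt\omega_k,\wt\dip_k)$. Lemma~\ref{lem:topStr} reformulates convergence of $(u_k,\mu_k)$ to $(u,\mu)$ in $\CHdom$ precisely as the convergence~\eqref{eqnContAr}--\eqref{eqnContvs} of the associated pairs $(\wt\Wr_k,\wt\dip_k)$ to $(\wt\Wr,\wt\dip)$ in $\String_0$, which is exactly the mode of convergence of generalized indefinite strings appearing in~\cite[Proposition~6.2]{IndefiniteString}. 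That proposition then equates this convergence with locally uniform convergence of the Weyl--Titchmarsh functions, i.e. with~(ii). Thus the first equivalence reduces to combining Lemma~\ref{lem:topStr} with~\cite[Proposition~6.2]{IndefiniteString}.

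The equivalence (ii)\,$\Leftrightarrow$\,(iii) is where the special representation from Proposition~\ref{prop:WTrepr} enters. I would rewrite
\[
 m(z) = \int_\R \frac{z}{\lambda(\lambda-z)}\, d\rho(\lambda) = c_2 + \int_\R \left(\frac{1}{\lambda-z} - \frac{\lambda}{1+\lambda^2}\right) d\rho(\lambda), \qquad c_2 = -\int_\R \frac{d\rho(\lambda)}{\lambda(1+\lambda^2)},
\]
so that $m$, and each $m_k$, is displayed in the standard form~\eqref{eqnWTmIntRep} with vanishing linear coefficient (which holds since $\wt\dip(\{0\})=0$). Evaluating at $z=\I$ gives $c_2 = \re\, m(\I)$ and $c_{2,k} = \re\, m_k(\I)$. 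Hence the second convergence in~\eqref{eqnContSD2} is nothing but $c_{2,k}\to c_2$, while the first convergence in~\eqref{eqnContSD2} is exactly weak convergence of the finite measures $d\rho_k/(1+\lambda^2)$ to $d\rho/(1+\lambda^2)$ tested against continuous functions on the one-point compactification of $\R$ (functions with a finite limit as $|\lambda|\to\infty$; taking $\chi\equiv 1$ controls the total mass and rules out escape of mass to infinity). The classical convergence theorem for Herglotz--Nevanlinna functions (see~\cite{kakr74a},~\cite[Section~5.3]{roro94}) asserts precisely that, for functions of this form with vanishing linear term, locally uniform convergence on $\C\backslash\R$ is equivalent to the joint convergence $c_{2,k}\to c_2$ together with this weak convergence of representing measures. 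In the forward direction, (ii) gives $c_{2,k}=\re\, m_k(\I)\to\re\, m(\I)=c_2$ and the weak convergence; in the backward direction, the two conditions in~\eqref{eqnContSD2} reassemble into locally uniform convergence of $m_k$, with reflection symmetry extending convergence from the upper half-plane to all of $\C\backslash\R$.

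The main obstacle, and the reason the second convergence in~\eqref{eqnContSD2} cannot be dropped, is the constant term $c_2$: weak convergence of $d\rho_k/(1+\lambda^2)$ alone controls only the integral part of $m$ and leaves the additive constant undetermined, since the integrand $1/(\lambda(1+\lambda^2))$ is odd and singular at the origin while the spectral gaps of the $\rho_k$ may shrink to zero along the sequence. The dual description of $c_2$ both as $\re\, m(\I)$ and as $-\int d\rho/(\lambda(1+\lambda^2))$ is exactly what ties this missing degree of freedom to a convergent spectral quantity, and the one point genuinely requiring care is verifying that these two expressions for $c_2$ coincide with the standard Nevanlinna datum appearing in the cited convergence theorem.
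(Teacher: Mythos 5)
Your proof is correct and takes essentially the same route as the paper: the equivalence of (i) and \ref{itmContii} is obtained, exactly as in the paper, by combining Lemma~\ref{lem:topStr} with \cite[Proposition~6.2]{IndefiniteString}. Your detailed treatment of \ref{itmContii}$\,\Leftrightarrow\,$\ref{itmContiii} --- identifying the second limit in \eqref{eqnContSD2} with convergence of the constants $c_{2,k}=\re\, m_k(\I)$ and the first with weak-$\ast$ convergence of the measures $d\rho_k(\lambda)/(1+\lambda^2)$ on the one-point compactification of $\R$ --- merely unpacks what the paper asserts in one line as the standard equivalence ``for Herglotz--Nevanlinna functions of the present type.''
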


 \begin{proof}
Observe first that~\ref{itmContii} and~\ref{itmContiii} are equivalent for Herglotz--Nevanlinna functions of the present type. 
Moreover, it follows from~\cite[Proposition~6.2]{IndefiniteString} that~\ref{itmContii} holds if and only if the corresponding pairs $(\wt\Wr_k,\wt\dip_k)$ converge to $(\wt\Wr,\wt\dip)$ in the sense of Lemma~\ref{lem:topStr}.
However, Lemma~\ref{lem:topStr} says that this is equivalent to convergence of the pairs $(u_k,\mu_k)$ to $(u,\mu)$ in $\CHdom$. 
 \end{proof}
 
 \begin{remark}
  Since locally uniform convergence on the set of Herglotz--Nevanlinna functions is induced by a metric, one can use the injection $(u,\mu)\mapsto m$ to transfer this metric to $\CHdom$. 
  In view of Proposition~\ref{propCont}, this metric then indeed induces our mode of convergence.
 \end{remark}

 For some results in the following, it will be necessary to restrict to subsets of $\CHdom$ on which the functional $E$ defined by~\eqref{eq:NormDef} is bounded. 
 To this end, we introduce the subsets $\CHdom(R)$ of $\CHdom$ by setting
 \begin{align}\label{eqnCHdomR}
 \CHdom(R) = \{(u,\mu)\in \CHdom\,|\, E(u,\mu) \leq R\}
 \end{align}
 for $R>0$.
 Due to the estimates in Proposition~\ref{prop:SpecGap}, pairs $(u,\mu)$ in $\CHdom(R)$ have a uniform spectral gap around zero. 
 In particular, this allows us to simplify condition~\ref{itmContiii} in Proposition~\ref{propCont}. 
 
 \begin{corollary}\label{corCont}
  If there is an $R>0$ such that $(u_k,\mu_k)$ belongs to $\CHdom(R)$ for all $k\in\N$, then the conditions in Proposition~\ref{propCont} are further equivalent to the following:
       \begin{enumerate}[label=(\roman*), ref=(\roman*), leftmargin=*, widest=iii]
        \setcounter{enumi}{3}\item\label{itmContiv} For all continuous functions $\chi\in C(\R)$ such that the limit of $\chi(\lambda)$ as $|\lambda|\rightarrow\infty$ exists and is finite one has
  \begin{align}\label{eqnContSD2R}
  \int_{\R} \frac{\chi(\lambda)}{1+\lambda^2} d\rho_k(\lambda) & \rightarrow \int_{\R} \frac{\chi(\lambda)}{1+\lambda^2} d\rho(\lambda).
 \end{align}
       \end{enumerate}
 \end{corollary}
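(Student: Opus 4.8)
The plan is to verify the two implications (iii)$\Rightarrow$(iv) and (iv)$\Rightarrow$(iii), since Proposition~\ref{propCont} already establishes the equivalence of (i), (ii) and (iii). The first implication is immediate, because condition (iv) is literally the first of the two convergences comprising condition (iii). All of the work therefore goes into (iv)$\Rightarrow$(iii), where the uniform spectral gap furnished by the assumption $(u_k,\mu_k)\in\CHdom(R)$ does the heavy lifting. Concretely, I would first combine the hypothesis $E(u_k,\mu_k)\le R$ with the lower estimate in Proposition~\ref{prop:SpecGap} to obtain $\lambda_0(\sigma_k)\ge\frac{1}{6R}=:\delta$ uniformly in $k$, so that every measure $\rho_k$ charges no mass on the open interval $(-\delta,\delta)$.

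Next I would argue that the limiting measure $\rho$ also vanishes on $(-\delta,\delta)$. Introducing the finite measures $d\nu_k=\frac{d\rho_k}{1+\lambda^2}$ and $d\nu=\frac{d\rho}{1+\lambda^2}$, condition (iv) says precisely that $\nu_k\to\nu$ weakly on the one-point compactification of $\R$, a compact metric space on which the admissible test functions appearing in (iv) are exactly the continuous functions. Since $(-\delta,\delta)$ is an open subset of this space and $\nu_k((-\delta,\delta))=0$ for every $k$, the portmanteau theorem yields $\nu((-\delta,\delta))\le\liminf_k\nu_k((-\delta,\delta))=0$, so that $\rho$ too is supported in $\{|\lambda|\ge\delta\}$.

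It then remains to recover the second convergence in (iii). The idea is that on the common support region $\{|\lambda|\ge\delta\}$ the function $\lambda\mapsto\frac{1}{\lambda}$ is continuous, bounded by $\frac{1}{\delta}$, and tends to zero as $|\lambda|\to\infty$; so I would fix any continuous extension $\chi\in C(\R)$ that equals $\frac{1}{\lambda}$ for $|\lambda|\ge\delta$ and has the finite limit $0$ at infinity (for instance, interpolating linearly across $[-\delta,\delta]$). Because neither $\rho_k$ nor $\rho$ sees the interval on which $\chi$ was altered, one has $\int_\R\frac{\chi(\lambda)}{1+\lambda^2}d\rho_k(\lambda)=\int_\R\frac{d\rho_k(\lambda)}{\lambda(1+\lambda^2)}$ and likewise for $\rho$; applying (iv) with this admissible $\chi$ then gives exactly $\int_\R\frac{d\rho_k(\lambda)}{\lambda(1+\lambda^2)}\to\int_\R\frac{d\rho(\lambda)}{\lambda(1+\lambda^2)}$, which together with (iv) itself is condition (iii). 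The main obstacle I anticipate is the middle step: one must upgrade the uniform pointwise gap to a gap for the \emph{limit} measure before the singular integrand $\frac{1}{\lambda(1+\lambda^2)}$ may be treated as the restriction of an admissible continuous test function. Once the gap is known to persist in the limit, the extension argument is entirely routine.
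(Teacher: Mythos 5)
Your proof is correct, and it fills in exactly the argument the paper leaves implicit: the corollary is stated without proof, following only the remark that the uniform spectral gap supplied by Proposition~\ref{prop:SpecGap} (namely $\lambda_0(\sigma_k)\geq 1/(6R)$ for all $k$) allows one to drop the second convergence in Proposition~\ref{propCont}~\ref{itmContiii}, and your device of extending $\lambda\mapsto 1/\lambda$ across the gap to an admissible test function $\chi$ is precisely how that remark gets implemented. The one place where you do more work than necessary is the middle step: you need not recover the gap of the limit measure from condition~\ref{itmContiv} via the portmanteau theorem, because in the setup of Proposition~\ref{propCont} the measure $\rho$ is by definition the spectral measure of the fixed pair $(u,\mu)\in\CHdom$, and Proposition~\ref{prop:WTrepr} (equivalently, Theorem~\ref{thmIP}) already guarantees that zero does not belong to its topological support; taking $\delta'=\min\lbrace 1/(6R),\lambda_0(\sigma)\rbrace>0$ then gives a common massless interval $(-\delta',\delta')$ for $\rho$ and all $\rho_k$, after which your test-function argument runs verbatim. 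Your portmanteau detour is not wrong --- it is a valid weak-convergence argument for finite measures on the one-point compactification of $\R$, and it has the mild virtue of showing, without invoking the direct spectral theory, that the limit measure inherits a gap of size at least $1/(6R)$ --- but in this setting it is dispensable.
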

 
  It also follows that the functional in~\eqref{eqnLCPars} is continuous on these subsets. 

\begin{lemma}\label{lem:convonR}
If the pairs $(u_k,\mu_k)$ converge to $(u,\mu)$ in $\CHdom$ and there is an $R>0$ such that $(u_k,\mu_k)$ belongs to $\CHdom(R)$ for all $k\in\N$, then
\begin{align}\label{eqnContMUonR}
       \int_{\R} \E^{- x}u_k(x)^2dx + \int_{\R} \E^{- x}d\mu_k(x)   \rightarrow \int_{\R} \E^{- x}u(x)^2dx+ \int_{\R}  \E^{- x}d\mu(x).
\end{align}
\end{lemma}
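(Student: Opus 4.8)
The plan is to reduce both sides of~\eqref{eqnContMUonR} to the single spectral quantity $\int_\R \lambda^{-2}\,d\rho$ and then exploit the uniform spectral gap that the hypothesis $(u_k,\mu_k)\in\CHdom(R)$ provides. The key preliminary is the identity
\[
 \int_\R \E^{-x}u(x)^2\,dx + \int_\R \E^{-x}d\mu(x) = \int_\R \E^{-x}(u(x)+u'(x))^2\,dx + \int_\R \E^{-x}d\dip(x),
\]
valid for every pair $(u,\mu)$ in $\CHdom$. This is purely a matter of integration by parts: using~\eqref{eqndipdef} the left-hand side equals $\int_\R \E^{-x}(2u^2+u'^2)\,dx + \int_\R\E^{-x}d\dip$, while expanding the square and integrating $\int_\R\E^{-x}\,2uu'\,dx=\int_\R\E^{-x}(u^2)'\,dx$ by parts (the boundary terms vanishing by~\eqref{eq:uatinfty} in Lemma~\ref{lem:umuEST}) shows that $\int_\R\E^{-x}(u+u')^2\,dx=\int_\R\E^{-x}(2u^2+u'^2)\,dx$. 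Combined with the trace formula~\eqref{eqnLCPars}, this means that the left-hand side of~\eqref{eqnContMUonR} is exactly $\int_\R\lambda^{-2}\,d\rho_k$ and the right-hand side is $\int_\R\lambda^{-2}\,d\rho$, so the claim becomes the convergence $\int_\R\lambda^{-2}\,d\rho_k\to\int_\R\lambda^{-2}\,d\rho$.

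To establish this convergence I would invoke the uniform gap. Since $E(u_k,\mu_k)\le R$, Proposition~\ref{prop:SpecGap} gives $\lambda_0(\sigma_k)\ge(6R)^{-1}$, so every measure $\rho_k$ vanishes on the interval $(-\delta,\delta)$ with $\delta=(6R)^{-1}$. Testing the convergence~\eqref{eqnContSD2R} from Corollary~\ref{corCont} against continuous functions supported inside this interval shows that the limit measure $\rho$ vanishes there as well. I would then choose a bounded function $\chi\in C(\R)$ that agrees with $\lambda\mapsto(1+\lambda^2)/\lambda^2$ on $\{|\lambda|\ge\delta\}$ and extends continuously across the origin; such a $\chi$ has the finite limit $1$ at $\pm\infty$ and is therefore admissible in Corollary~\ref{corCont}~\ref{itmContiv}. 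Because none of the measures charge $(-\delta,\delta)$, we have $\int_\R\lambda^{-2}\,d\rho_k=\int_\R \chi(\lambda)(1+\lambda^2)^{-1}\,d\rho_k$ and likewise for $\rho$, so~\eqref{eqnContSD2R} yields precisely the desired convergence.

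The only genuinely delicate point is the singularity of $\lambda^{-2}$ at the origin, and this is exactly what the uniform gap is designed to neutralize: without the hypothesis $(u_k,\mu_k)\in\CHdom(R)$ the supports of $\rho_k$ could approach zero and the functional $\int_\R\lambda^{-2}\,d\rho_k$ would fail to be continuous, consistent with the one-sided inequality noted in the earlier remark. A self-contained route that avoids the trace formula is also available should it be preferable: splitting the integrals at a large $x_1$, the tails $\int_{x_1}^\infty$ are dominated by $R^2\E^{-x_1}$ together with the boundary term $\E^{-x_1}u_k(x_1)^2$, and the latter can be made uniformly small in $k$ by representing $v_k=\E^{-\frac{x}{2}}u_k$ via the $L^2$ function $\E^{-\frac{x}{2}}(u_k+u_k')$ through $v_k'+\tfrac32 v_k=\E^{-\frac{x}{2}}(u_k+u_k')$ and estimating with Cauchy--Schwarz, while the heads converge by~\eqref{eqnContMU} and~\eqref{eqnContUint02}; this upgrades the liminf-inequality to an equality.
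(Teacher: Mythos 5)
Your primary route is circular within the logical structure of this paper. You reduce \eqref{eqnContMUonR} to the convergence $\int_\R \lambda^{-2}\,d\rho_k \to \int_\R \lambda^{-2}\,d\rho$ by invoking the trace formula \eqref{eqnLCPars} (equivalently \eqref{eqnLCPars02}) for arbitrary pairs in $\CHdom$. At this point, however, that formula is only announced in Remark~\ref{remMatZ}; its proof is Corollary~\ref{cor:TraceFla}, which comes \emph{after} Lemma~\ref{lem:convonR} and relies on it: the identity is known a priori only for multi-peakon profiles (from \cite[Proposition~2.6]{ConservCH}), and it is extended to all of $\CHdom$ precisely by approximating with multi-peakons and passing to the limit, using Lemma~\ref{lem:convonR} on the coefficient side and Proposition~\ref{propCont} (with the uniform gap) on the spectral side. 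So your main argument assumes the very statement this lemma is needed to establish — in effect you run the paper's logic backwards. The spectral half of your argument is itself correct: the uniform gap $\lambda_0(\sigma_k)\geq (6R)^{-1}$ from Proposition~\ref{prop:SpecGap}, the vanishing of $\rho$ on the gap obtained by testing \eqref{eqnContSD2R} against functions supported there, and the cut-off $\chi$ that makes $\int_\R\lambda^{-2}\,d\rho_k$ accessible to Corollary~\ref{corCont}~\ref{itmContiv}. But this is exactly the spectral ingredient of Corollary~\ref{cor:TraceFla}; it cannot double as the proof of the lemma.

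Your fallback ``self-contained route'' is the one that works, and it is essentially the paper's own proof. The paper's version is slightly cleaner in one respect: it first integrates by parts over all of $\R$ (boundary terms vanishing by \eqref{eq:uatinfty}) so that \eqref{eqnContMUonR} becomes convergence of $\int_\R \E^{-s}(u_k(s)+u_k'(s))^2\,ds + \int_\R \E^{-s}\,d\dip_k(s)$, and then splits \emph{these} quantities at a point $x$. The head satisfies $\int_{-\infty}^x \E^{-s}(u_k+u_k')^2\,ds + \int_{-\infty}^x \E^{-s}\,d\dip_k = \int_{-\infty}^x \E^{-s}u_k(s)^2\,ds + \E^{-x}u_k(x)^2 + \int_{-\infty}^x \E^{-s}\,d\mu_k(s)$, so the boundary term lands in the head, where it converges simply because $u_k\to u$ pointwise, while the tail is exactly the quantity bounded by $R^2\E^{-x}$ from \eqref{eq:NormDef}. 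Splitting in the original variables, as you do, leaves the boundary term $\E^{-x_1}u_k(x_1)^2$ in the tail, which forces your extra Duhamel/Cauchy--Schwarz step; that step does go through (bound $\E^{-x_0/2}|u_k(x_0)|$ uniformly at a fixed $x_0$ via the convergent heads and \eqref{eq:supU-EST}, then use $\|\E^{-s/2}(u_k+u_k')\|_{L^2(x_0,\infty)}\leq R\,\E^{-x_0/2}$), but it is an avoidable complication. Promote the fallback to the actual proof, with the paper's ordering of the integration by parts, and drop the trace-formula reduction.
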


\begin{proof}
  First of all, we note that~\eqref{eqnContMUonR} becomes 
\begin{align*}
 \int_{\R} \E^{-s}(u_k(s) + u_k'(s))^2ds + \int_{\R}\E^{-s}d\dip_k(s) \to  \int_{\R} \E^{-s} (u(s) + u'(s))^2ds + \int_{\R} \E^{-s}d\dip(s)
\end{align*} 
 after an integration by parts. 
 Similarly, it follows from an integration by parts that
\begin{align*}
 &  \int_{-\infty}^x \E^{-s}(u_k(s) + u_k'(s))^2ds + \int_{-\infty}^x\E^{-s}d\dip_k(s) \\
 & \qquad\qquad = \int_{-\infty}^x \E^{- s}u_k(s)^2ds + \E^{-x}u_k(x)^2 + \int_{-\infty}^x \E^{- s}d\mu_k(s). 
\end{align*}
By the definition of convergence on $\CHdom$, as well as Lemma~\ref{lem:topD}, we then get  
\begin{align*}
 &  \int_{-\infty}^x \E^{-s}(u_k(s) + u_k'(s))^2ds + \int_{-\infty}^x\E^{-s}d\dip_k(s) \\
  & \qquad\qquad \rightarrow  \int_{-\infty}^x \E^{-s}(u(s) + u'(s))^2ds + \int_{-\infty}^x\E^{-s}d\dip(s)
\end{align*}
for all $x\in\R$.
 On the other side, since all pairs $(u_k,\mu_k)$ belong to $\CHdom(R)$ for some $R>0$, we also have the uniform bound 
\begin{align*}
 \int_{x}^{\infty}\E^{-s}(u_k(s) + u_k'(s))^2ds + \int_{x}^{\infty}\E^{-s}d\dip_k(s) \le R^2\E^{-x} 
\end{align*}
for all $x\in \R$ in view of~\eqref{eq:NormDef}. 
 Together, this ensures the claimed convergence.
\end{proof}

We will next employ this to prove locally uniform convergence of the corresponding functions $P_k$ as defined in~\eqref{eq:P0def} under the assumptions of Lemma~\ref{lem:convonR}. 

\begin{corollary}\label{cor:convPk}
If the pairs $(u_k,\mu_k)$ converge to $(u,\mu)$ in $\CHdom$ and there is an $R>0$ such that $(u_k,\mu_k)$ belongs to $\CHdom(R)$ for all $k\in\N$, then the functions $P_k$ converge locally uniformly to $P$.
\end{corollary}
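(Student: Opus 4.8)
The plan is to work directly with the convolution formula~\eqref{eq:P0def}, writing $4P_k(x) = \int_\R K(x,s)\,d\nu_k(s)$, where $K(x,s) = \E^{-|x-s|}$ and $\nu_k$ is the positive Borel measure given by $d\nu_k(s) = u_k(s)^2\,ds + d\mu_k(s)$ (and $\nu$ denotes the corresponding measure for the limit). Fixing a compact interval $[-M,M]$, the goal becomes $\sup_{x\in[-M,M]}\bigl|\int_\R K(x,s)\,(d\nu_k - d\nu)(s)\bigr| \to 0$. First I would record that $\nu_k\to\nu$ vaguely: indeed, $\int h\,d\mu_k\to\int h\,d\mu$ for $h\in C_{\cc}(\R)$ by Lemma~\ref{lem:topD}~\ref{itmtopDiv}, while $u_k\to u$ locally uniformly by Lemma~\ref{lem:topD}~\ref{itmtopDi} gives $\int h\,u_k^2\,dx\to\int h\,u^2\,dx$, so that $\int h\,d\nu_k\to\int h\,d\nu$ for every $h\in C_{\cc}(\R)$.

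Next I would establish uniform smallness of the tails of these integrals, uniformly in both $k$ and $x\in[-M,M]$. Since $\E^s\le\E^{-s}$ for $s\le0$, the contribution of $(-\infty,-(M+T))$ is, for $x\ge -M$, bounded by $\E^M\int_{-\infty}^{-(M+T)}\E^{-s}\,d\nu_k(s)\le 2\E^M\int_{-\infty}^{-(M+T)}\E^{-s}\,d\mu_k(s)$ by~\eqref{eqnmuac}; the uniform tightness at $-\infty$ of the measures $\E^{-s}\,d\mu_k$ extracted from~\eqref{eqnContMU} (exactly as in the proofs of Lemma~\ref{lem:topD} and Lemma~\ref{lem:topStr}) forces this to $0$ as $T\to\infty$, uniformly in $k$. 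For the tail at $+\infty$, the contribution of $(M+T,\infty)$ is for $x\le M$ at most $\E^M\int_{M+T}^\infty\E^{-s}\,d\nu_k(s)$, and the integration by parts from the proof of Lemma~\ref{lem:convonR} rewrites the latter as $\int_{M+T}^\infty\E^{-s}(u_k+u_k')^2\,ds + \int_{M+T}^\infty\E^{-s}\,d\dip_k + \E^{-(M+T)}u_k(M+T)^2$. The first two terms are at most $E(u_k,\mu_k)^2\,\E^{-(M+T)}\le R^2\E^{-(M+T)}$ by the definition~\eqref{eq:NormDef} of $E$ together with $(u_k,\mu_k)\in\CHdom(R)$.

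The main obstacle is the remaining boundary term $\E^{-(M+T)}u_k(M+T)^2$, which must vanish as $T\to\infty$ \emph{uniformly in $k$}. I would obtain this from the Hardy-type representation~\eqref{eqnuitofa}: setting $y=\log x$, one has $\E^{-y}u_k(y)^2 = x^{-3}\bigl(\int_0^x\wt\Wr_k(s)s\,ds\bigr)^2$, and splitting the integral at a fixed $x_0$ and applying the Cauchy--Schwarz inequality yields, for $x\ge x_0$, the estimate $\E^{-y}u_k(y)^2\le \frac{2}{x^3}\bigl(\int_0^{x_0}\wt\Wr_k(s)s\,ds\bigr)^2 + \frac{2}{3}\int_{x_0}^\infty\wt\Wr_k(s)^2\,ds$. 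The second summand is at most $\frac{2}{3x_0}E(u_k,\mu_k)^2\le\frac{2R^2}{3x_0}$ (using $\wt E(\wt\Wr_k,\wt\dip_k)=E(u_k,\mu_k)$ from the proof of Proposition~\ref{prop:SpecGap}), while the first is bounded by $\frac{2x_0^3}{3x^3}\int_0^\infty\wt\Wr_k(s)^2\,ds$; crucially, $\sup_k\int_0^\infty\wt\Wr_k(s)^2\,ds<\infty$, because Lemma~\ref{lem:convonR} guarantees convergence (hence boundedness) of $\int_\R\E^{-s}u_k^2\,ds + \int_\R\E^{-s}\,d\mu_k$, which dominates $\int_0^\infty\wt\Wr_k^2\,ds$ by letting $x\to\infty$ in~\eqref{eq:changeofnorms}. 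Choosing first $x_0$ large and then $x$ large therefore gives $\sup_k\E^{-y}u_k(y)^2\to0$ as $y\to\infty$, which kills the boundary term. Applying the same tail estimates to the single measure $\nu$ (where no uniformity in $k$ is needed) shows $\sup_{x\in[-M,M]}\int_{|s|>M+T}K(x,s)\,d\nu(s)\to0$.

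Finally, having fixed $T$ so large that all tails fall below a prescribed $\varepsilon$, I would treat the bounded region with a continuous cutoff $\chi\in C_{\cc}(\R)$ equal to $1$ on $[-(M+T),M+T]$, so that it remains only to show $\sup_{x\in[-M,M]}\bigl|\int_\R K(x,s)\chi(s)\,(d\nu_k-d\nu)(s)\bigr|\to0$. The family $\{K(x,\redot)\chi : x\in[-M,M]\}$ is uniformly bounded and equicontinuous (as $\E^{-|\redot|}$ is $1$-Lipschitz) with supports in a common compact set, so by Arzel\`a--Ascoli it is covered by finitely many sup-norm balls; for each centre vague convergence gives convergence of the integrals, and the error is controlled by $\sup_k\nu_k(\supp(\chi))<\infty$, which again follows from vague convergence by testing against a dominating function in $C_{\cc}(\R)$. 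This upgrades the vague convergence to convergence that is uniform in $x\in[-M,M]$, and combining with the tail bounds completes the proof.
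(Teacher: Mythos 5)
Your proof is correct, and it takes a genuinely different route from the paper's. The paper works with the splitting~\eqref{eq:P0defSum} of each $P_k$ into four one-sided weighted integrals, checks that these converge pointwise almost everywhere (the two integrals over $(-\infty,x)$ directly from the definition of convergence in $\CHdom$ as in Lemma~\ref{lem:topD}, the two over $(x,\infty)$ by subtracting from the full-line integrals supplied by Lemma~\ref{lem:convonR}), and then upgrades to locally uniform convergence by applying the Arzel\`a--Ascoli theorem to the sequence $\{P_k\}$ itself, using local uniform boundedness and equicontinuity. You instead keep the convolution form $4P_k(x)=\int_\R \E^{-|x-s|}\,d\nu_k(s)$, prove vague convergence $\nu_k\to\nu$, and establish tail bounds that are uniform both in $k$ and in $x$ on compacts, applying compactness to the family of kernels $\E^{-|x-\redot|}\chi$ rather than to the $P_k$. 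The two arguments share the essential ingredient --- Lemma~\ref{lem:convonR} is what controls the behavior at $+\infty$ in both --- but your route has to confront an issue the paper's sidesteps entirely: the boundary term $\E^{-x}u_k(x)^2$ arising from the integration by parts must decay \emph{uniformly in $k$}, and the crude bound~\eqref{eq:supU-EST} only gives uniform boundedness. Your resolution via the Hardy-type representation~\eqref{eqnuitofa}, Cauchy--Schwarz, and the bound $\wt{E}(\wt\Wr_k,\wt\dip_k)=E(u_k,\mu_k)\le R$ is correct and is the most delicate step of your argument. What your approach buys in exchange for this extra work is a more quantitative statement: it isolates exactly where the hypothesis $(u_k,\mu_k)\in\CHdom(R)$ enters (it is precisely what yields the uniform exponential tail estimate at $+\infty$), and it produces explicit uniform tail control rather than relying on equicontinuity of the limit objects. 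The paper's proof is shorter but leaves these mechanisms implicit in the cited lemmas.
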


\begin{proof}
By writing each function $P_k$ as in~\eqref{eq:P0defSum}, one sees that the first two summands converge almost everywhere (compare the proof of Lemma~\ref{lem:topD}~\ref{itmtopDi} and~\ref{itmtopDii}). 
We then use Lemma~\ref{lem:topD} together with Lemma~\ref{lem:convonR} to conclude that the remaining two summands also converge almost everywhere.
Since the functions $P_k$ are locally uniformly bounded (in view of Lemma~\ref{lem:convonR}) as well as locally uniformly equicontinuous, the claim follows by applying the Arzel\`a--Ascoli theorem. 
\end{proof}

An important role is played by particular kinds of pairs in $\CHdom$; so-called {\em multi-peakon profiles}. 
More specifically, a pair $(u,\mu)$ in $\CHdom$ is called a {\em multi-peakon profile} if $\omega$ and $\dip$ are Borel measures that are supported on a finite set. 
This entails that the function $u$ is of the form
\begin{align}\label{eq:MPprofile}
u(x) & = \frac{1}{2}\sum_{n=1}^N p_n \E^{-|x-x_n|}  
\end{align}
for some $N\in\N\cup\{0\}$,  $x_1,\ldots,x_N\in\R$ and $p_1,\ldots,p_N\in\R$. 
We shall denote the set of all multi-peakon profiles by $\Peakons$. 
A pair $(u,\mu)$ in $\CHdom$ belongs to $\Peakons$ if and only if the corresponding spectrum $\sigma$ is a finite set; see~\cite[Section~4]{ConservMP} and also the next remark. 
In this case, the pair $(u,\mu)$ can be recovered explicitly in terms of the moments of the corresponding spectral measure $\rho$; see~\cite[Corollary~4.6]{ConservMP}.

\begin{remark}
Using the bijection between the spaces $\CHdom$ and $\String_0$ in Lemma~\ref{lemumuwdip}, one sees that a pair $(u,\mu)$ in $\CHdom$ is a multi-peakon profile if and only if the function $\wt\Wr$ is piecewise constant (with finitely many steps) and the measure $\wt\dip$ has finite support.
In this case, the corresponding spectral problem~\eqref{eqntildeString} allows a complete direct and inverse spectral theory, which goes back to work of M.\ G.\ Krein and H.\ Langer on the indefinite moment problem~\cite{krla79} (see~\cite{IndMoment} and~\cite{StieltjesType} for further details).    
\end{remark}

 \begin{proposition}\label{prop:Peakons}
  For every pair $(u,\mu)$ in $\CHdom$ there is a sequence $(u_k,\mu_k)$ of multi-peakon profiles in $\Peakons$ with the following properties: 
      \begin{enumerate}[label=(\roman*), ref=(\roman*), leftmargin=*, widest=iii]
        \item The sequence $(u_k,\mu_k)$ converges to $(u,\mu)$ in $\CHdom$. 
        \item The corresponding spectral gaps satisfy $\lambda_0(\sigma_k)\geq\lambda_0(\sigma)$.
        \item The corresponding spectral measures satisfy 
  \begin{align}\label{eqnPdenserho}
      \int_\R \frac{d\rho_k(\lambda)}{\lambda^2}  \leq \int_\R \frac{d\rho(\lambda)}{\lambda^2}.
  \end{align} 
    \end{enumerate}
  If the pair $(u,\mu)$ belongs to $\CHdom^+$, then this sequence $(u_k,\mu_k)$ can be chosen in $\CHdom^+$. 
 \end{proposition}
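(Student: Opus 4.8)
The plan is to work entirely on the spectral side and to realize the approximating multi-peakon profiles as inverse spectral images of suitable finitely supported measures. By Theorem~\ref{thmIP} it suffices to produce a sequence $\rho_k$ of finitely supported measures in $\SM_0$: each such $\rho_k$ corresponds via Theorem~\ref{thmIP} to a unique pair $(u_k,\mu_k)\in\CHdom$, and finiteness of $\supp\rho_k$ means the spectrum $\sigma_k$ is finite, so $(u_k,\mu_k)$ lies in $\Peakons$. The guiding observation is that properties~(ii) and~(iii) can be arranged essentially for free provided every atom of $\rho_k$ is placed \emph{inside} the topological support of $\rho$: then $\supp\rho_k\subseteq\supp\rho\subseteq\{\lambda\in\R:|\lambda|\geq\lambda_0(\sigma)\}$, which forces $\lambda_0(\sigma_k)\geq\lambda_0(\sigma)$, and the trace bound will follow from discarding mass rather than adding it.

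To set up the construction I would first record that $\int_\R d\rho(\lambda)/\lambda^2$ is finite. Indeed, by Proposition~\ref{prop:WTrepr} zero has a gap around it in the spectrum, so $\lambda_0(\sigma)>0$, and on $\supp\rho$ one has $\lambda^{-2}\leq(1+\lambda_0(\sigma)^{-2})(1+\lambda^2)^{-1}$; finiteness then follows from the Poisson condition~\eqref{eq:Poisson}. Hence $d\kappa(\lambda):=\lambda^{-2}\,d\rho(\lambda)$ defines a finite positive Borel measure supported in $\{|\lambda|\geq\lambda_0(\sigma)\}$. For each $k\in\N$ I would truncate $\kappa$ to the compact set $\{\lambda_0(\sigma)\leq|\lambda|\leq k\}$, partition this set into finitely many Borel pieces $B_1,\dots,B_{N_k}$ of small mesh, and in each piece $B_j$ with $\kappa(B_j)>0$ pick a point $\lambda_j\in B_j\cap\supp\rho$ (nonempty since $\kappa(B_j)>0$ forces $\rho(B_j)>0$). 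Setting $\kappa_k=\sum_j\kappa(B_j)\delta_{\lambda_j}$ and $d\rho_k(\lambda)=\lambda^2\,d\kappa_k(\lambda)$ yields a finitely supported $\rho_k$ with $\supp\rho_k\subseteq\supp\rho$, so $\rho_k\in\SM_0$, property~(ii) holds as above, and property~(iii) holds because $\int d\rho_k/\lambda^2=\kappa_k(\R)\leq\kappa(\R)=\int d\rho/\lambda^2$, the mass beyond $|\lambda|>k$ being discarded.

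For the convergence in~(i) I would exploit the uniform spectral gap: since $\lambda_0(\sigma_k)\geq\lambda_0(\sigma)$, Proposition~\ref{prop:SpecGap} gives $E(u_k,\mu_k)\leq\sqrt2/\lambda_0(\sigma)=:R$, so the whole sequence lies in $\CHdom(R)$ and Corollary~\ref{corCont} applies. It then remains only to verify the single convergence~\eqref{eqnContSD2R}, that is, $\int\chi(\lambda)(1+\lambda^2)^{-1}\,d\rho_k\to\int\chi(\lambda)(1+\lambda^2)^{-1}\,d\rho$ for every $\chi\in C(\R)$ with a finite limit as $|\lambda|\to\infty$. Rewriting the integrand against $\rho_k$ as $\chi(\lambda)\lambda^2(1+\lambda^2)^{-1}\,d\kappa_k$, the test function $g(\lambda)=\chi(\lambda)\lambda^2(1+\lambda^2)^{-1}$ extends continuously to the two-point compactification $\R\cup\{\pm\infty\}$, so this is exactly weak-$*$ convergence $\kappa_k\to\kappa$ tested against $g$. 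I would establish it through the standard splitting
\begin{align*}
\int g\,d\kappa_k-\int g\,d\kappa=\sum_{j}\Bigl(g(\lambda_j)\,\kappa(B_j)-\int_{B_j}g\,d\kappa\Bigr)-\int_{|\lambda|>k}g\,d\kappa,
\end{align*}
whose first term is bounded by $\kappa(\R)$ times the maximal oscillation of $g$ over the pieces $B_j$ and whose second term tends to zero as $k\to\infty$ since $g$ is bounded and $\kappa$ finite. A diagonal choice, refining the mesh fast enough as $k\to\infty$, then yields~\eqref{eqnContSD2R} and hence~(i).

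Finally, the $\CHdom^+$ case is automatic: if $(u,\mu)\in\CHdom^+$ then $\rho\in\SM_0^+$ by Corollary~\ref{corIP+}, so $\supp\rho\subseteq(0,\infty)$; since all atoms $\lambda_j$ were chosen in $\supp\rho$, one has $\supp\rho_k\subseteq(0,\infty)$, i.e. $\rho_k\in\SM_0^+$, and Corollary~\ref{corIP+} returns $(u_k,\mu_k)\in\CHdom^+$. The step I expect to require the most care is the weak-$*$ convergence argument, and in particular the control of escape of mass at infinity; this is harmless here precisely because the discarded tail $\int_{|\lambda|>k}g\,d\kappa$ vanishes in the limit and $\kappa$ carries no mass ``at infinity,'' so that no spurious atom appears in the limiting measure.
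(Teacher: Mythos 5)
Your proof is correct and takes essentially the same route as the paper: both work entirely on the spectral side, approximating $\rho$ by finitely supported measures in $\SM_0$ whose support lies inside $\supp\rho$ (so the gap is preserved and the $\lambda^{-2}$-mass can only decrease), and then recover convergence in $\CHdom$ from Proposition~\ref{propCont} and Corollary~\ref{corCont}, using the uniform spectral gap and Proposition~\ref{prop:SpecGap} to place the whole sequence in $\CHdom(R)$. The only difference is that the paper states the measure-approximation step without detail, whereas you supply an explicit discretization of $d\kappa=\lambda^{-2}d\rho$ together with the weak-$*$ convergence estimate, which is a welcome but not essentially different elaboration.
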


 \begin{proof}
  Every measure $\rho$ in $\SM_0$ can be approximated in the sense of Corollary~\ref{corCont}~\ref{itmContiv} by a sequence of measures $\rho_k$ in $\SM_0$ supported on finite sets such that the massless gap $(-\eta,\eta)$ of $\rho$ around zero is preserved and~\eqref{eqnPdenserho} is satisfied. 
  For example, these measures can be chosen as
  \begin{align*}
    \rho_k = \sum_{j=1}^{k^2} \rho\biggl(\biggl(-\eta-\frac{j}{k},-\eta-\frac{j-1}{k}\biggr]\biggr) \delta_{-\eta-\frac{j}{k}} + \sum_{j=1}^{k^2} \rho\biggl(\biggl[\eta+\frac{j-1}{k},\eta+\frac{j}{k}\biggr)\biggr) \delta_{\eta+\frac{j}{k}}.
  \end{align*}
  If $\rho$ belongs to $\SM_0^+$, then this sequence also belongs to $\SM_0^+$. 
  The claim then follows by applying Proposition~\ref{propCont} and Corollary~\ref{corCont} because each measure $\rho_k$ corresponds to a multi-peakon profile $(u_k,\mu_k)$; see~\cite{ConservMP} or~\cite{besasz00} for the positive case. 
  Also note that it follows from Proposition~\ref{prop:SpecGap} that all the pairs $(u_k,\mu_k)$ belong to $\CHdom(R)$ for some $R>0$ due to the uniform spectral gaps. 
 \end{proof}

As an application of the above continuity and approximation properties, we shall now verify relation~\eqref{eqnLCPars}.

\begin{corollary}\label{cor:TraceFla}
  If $(u,\mu)$ is a pair in $\CHdom$ and $\rho$ is the corresponding spectral measure, then 
\begin{align}\label{eqnLCPars02}
\int_\R \E^{-x}u(x)^2dx + \int_\R \E^{-x}d\mu(x) = \int_\R \frac{d\rho(\lambda)}{\lambda^2}.
\end{align}
\end{corollary}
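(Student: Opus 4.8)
The plan is to establish the trace formula~\eqref{eqnLCPars02} first for multi-peakon profiles and then to extend it to all of $\CHdom$ by the density and continuity results just proved. The left-hand side equals
\[
\int_\R \E^{-s}(u(s)+u'(s))^2ds + \int_\R \E^{-s}d\dip(s)
\]
after an integration by parts (using that $\E^{-x}u(x)^2\to0$ as $|x|\to\infty$ by Lemma~\ref{lem:umuEST}), which in turn equals $\wt{E}$-type quantities via the change of variables in~\eqref{eq:changeofnorms}; in the notation of Section~\ref{secSpace} this is $\int_0^\infty \wt{\Wr}(s)^2ds + \int_0^\infty d\wt{\dip}$. Hence the identity~\eqref{eqnLCPars02} is nothing but the statement that this $\String_0$-norm equals $\int_\R \lambda^{-2}d\rho(\lambda)$, which is the content of the derivative-at-zero computation indicated in Remark~\ref{remMatZ}.

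First I would verify the formula directly in the multi-peakon case. For a multi-peakon profile the spectrum $\sigma$ is a finite set and the spectral measure $\rho$ is a finite sum of point masses, so both sides of~\eqref{eqnLCPars02} are finite and the identity can be read off from the explicit moment correspondence recorded in~\cite[Section~4]{ConservMP}; alternatively it follows from the analyticity of $m$ at zero established in Proposition~\ref{prop:WTrepr} by differentiating the representation~\eqref{eqnWTmIntRepZero} once at $z=0$, which gives $m'(0) = \int_\R \lambda^{-2}d\rho(\lambda)$, and matching this against the coefficient-side expression for $m'(0)$ obtained from the canonical system~\eqref{eqnCanSysString}.

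Next I would pass to the general case by approximation. Given an arbitrary pair $(u,\mu)$ in $\CHdom$, Proposition~\ref{prop:Peakons} furnishes a sequence of multi-peakon profiles $(u_k,\mu_k)$ converging to $(u,\mu)$ in $\CHdom$ with a uniform spectral gap, so that $(u_k,\mu_k)\in\CHdom(R)$ for some $R>0$. On the coefficient side, Lemma~\ref{lem:convonR} yields
\[
\int_\R \E^{-x}u_k(x)^2dx + \int_\R \E^{-x}d\mu_k(x) \to \int_\R \E^{-x}u(x)^2dx + \int_\R \E^{-x}d\mu(x).
\]
On the spectral side, applying Corollary~\ref{corCont}~\ref{itmContiv} with the choice $\chi(\lambda) = (1+\lambda^2)/\lambda^2$—which is continuous on $\R$ precisely because the uniform spectral gap keeps $\lambda$ bounded away from zero on the supports, and which has a finite limit as $|\lambda|\to\infty$—converts~\eqref{eqnContSD2R} into
\[
\int_\R \frac{d\rho_k(\lambda)}{\lambda^2} \to \int_\R \frac{d\rho(\lambda)}{\lambda^2}.
\]
Since~\eqref{eqnLCPars02} holds for every $(u_k,\mu_k)$, passing to the limit on both sides gives the identity for $(u,\mu)$.

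The main obstacle is the interchange of limit and integration near zero on the spectral side: one must be sure that the test function $\chi(\lambda)=(1+\lambda^2)/\lambda^2$ is genuinely admissible in Corollary~\ref{corCont}, i.e.\ bounded and continuous on the relevant region. This is exactly where the uniform spectral gap from $\CHdom(R)$ is essential—without it $\chi$ would blow up at zero and the convergence~\eqref{eqnContSD2R} could not be applied to it. The finiteness of both sides for the limit pair is then automatic from~\eqref{eq:Poisson} together with the gap, so no further integrability argument is needed.
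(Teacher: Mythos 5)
Your proof follows essentially the same route as the paper's: the identity is first established for multi-peakon profiles by citation (the paper invokes \cite[Proposition~2.6]{ConservCH} rather than \cite{ConservMP}, but this is immaterial), and then extended to all of $\CHdom$ via the approximating sequence of Proposition~\ref{prop:Peakons}, with Lemma~\ref{lem:convonR} handling the coefficient side and the spectral convergence of Proposition~\ref{propCont}/Corollary~\ref{corCont} handling the measure side, exactly as in the paper. The only slip is your assertion that $\chi(\lambda)=(1+\lambda^2)/\lambda^2$ ``is continuous on $\R$'' because of the uniform spectral gap---it is not continuous at zero no matter what the measures do; what you mean, and what suffices, is that $\chi$ can be redefined on the massless interval $(-\varepsilon,\varepsilon)$ to yield an admissible test function in $C(\R)$ without changing any of the integrals $\int \chi(\lambda)(1+\lambda^2)^{-1}d\rho_k(\lambda)$ or $\int \chi(\lambda)(1+\lambda^2)^{-1}d\rho(\lambda)$.
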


\begin{proof}
 We first note that an integration by parts shows that the left-hand sides in~\eqref{eqnLCPars02} and~\eqref{eqnLCPars} coincide.
 Let $(u_k,\mu_k)$ be an approximating sequence of multi-peakon profiles as in Proposition~\ref{prop:Peakons}. 
 For these pairs, the identity 
 \begin{align*}
\int_\R \E^{-x}u_k(x)^2dx + \int_\R \E^{-x}d\mu_k(x) = \int_\R \frac{d\rho_k(\lambda)}{\lambda^2}
\end{align*}
 holds according to~\cite[Proposition~2.6]{ConservCH}. 
 Due to the uniform spectral gap and the estimates in Proposition~\ref{prop:SpecGap}, we may apply Lemma~\ref{lem:convonR} to conclude that  
\begin{align*}
\int_\R \E^{-x}u_k(x)^2dx + \int_\R \E^{-x}d\mu_k(x) \to \int_\R \E^{-x}u(x)^2dx + \int_\R \E^{-x}d\mu(x).
\end{align*} 
 On the other hand, it follows from Proposition~\ref{propCont}~\ref{itmContiii} that   
\begin{align*}
\int_\R \frac{d\rho_k(\lambda)}{\lambda^2}  \to \int_\R \frac{d\rho(\lambda)}{\lambda^2},
\end{align*}
 where we again used that our sequence has a uniform spectral gap.
\end{proof}

\begin{remark}
As we will see in the next section, relation~\eqref{eqnLCPars02} is important in extending the conservative multi-peakon flow to the conservative Camassa--Holm flow on the whole space $\CHdom$. 
Let us also point out that the quantity appearing under the integral on the left-hand side is nothing but $u^2 + \mu$, which also defines the nonlocal term $P$ in~\eqref{eq:PequDef}.
\end{remark}

   For a fixed measure $\rho_0$ in $\SM_0$, we define the {\em isospectral set} $\Iso{\rho_0}$ as the set of all those pairs $(u,\mu)$ in $\CHdom$ whose associated spectral measure $\rho$ is mutually absolutely continuous with respect to $\rho_0$.
  It is an immediate consequence of Theorem~\ref{thmIP} that the isospectral set $\Iso{\rho_0}$ is in one-to-one correspondence with the set $\Lambda(\rho_0)$ consisting of all functions $\vartheta$ in $L^1_{\loc}(\R;\rho_0)$ that are positive almost everywhere with respect to $\rho_0$ and such that 
 \begin{align}\label{eqnLambdarho}
   \int_\R  \frac{\vartheta(\lambda)}{\lambda^2} d\rho_0(\lambda) <\infty,
 \end{align} 
 by means of the bijection given by the Radon--Nikodym derivative 
 \begin{align}\label{eqnIsoSpecTrans}
  (u,\mu) \mapsto  \frac{d\rho}{d\rho_0}.
 \end{align}
 In the special case when the topological support $\sigma_0$ of the measure $\rho_0$ is discrete, the set $\Iso{\rho_0}$ consists precisely of all pairs $(u,\mu)$ in $\CHdom$ whose spectrum coincides with $\sigma_0$. 
Under this assumption, the isospectral set $\Iso{\rho_0}$ can be  identified with 
  \begin{align}\label{eqnLambdasigma}
    \biggl\lbrace \gamma:\sigma_0\rightarrow(0,\infty) \,\biggl|\; \sum_{\lambda\in\sigma_0}  \frac{\gamma(\lambda)}{\lambda^2}  < \infty \biggr.\biggr\rbrace
 \end{align} 
 by means of the bijection $ (u,\mu) \mapsto\gamma$ given by $\gamma(\lambda) = \rho(\{\lambda\})$. 
The values of $\gamma$ here are precisely the reciprocals of the usual norming constants.

\section{The conservative Camassa--Holm flow}\label{secCCH}

 Let us define the {\em conservative Camassa--Holm flow} $\Phi$ on $\CHdom$  as a mapping
 \begin{align}
   \Phi\colon  \CHdom\times\R \rightarrow\CHdom
 \end{align}
 in the following way:
 Given a pair $(u,\mu)$ in $\CHdom$ with associated spectral measure $\rho$ (by means of the spectral transform considered in the previous section) as well as a $t\in\R$, the corresponding image $\Phi^t(u,\mu)$ under $\Phi$ is defined as the unique pair in $\CHdom$ for which the associated spectral measure is given by    
   \begin{align}\label{eqnSMEvo}
    B \mapsto \int_B \E^{-\frac{t}{2\lambda}} d\rho(\lambda)
  \end{align}
  on the Borel subsets of $\R$. 
 We note that $\Phi^t(u,\mu)$ is well-defined since the measure given by~\eqref{eqnSMEvo} belongs to $\SM_0$ whenever so does $\rho$ and hence the existence of a unique corresponding pair in $\CHdom$ is guaranteed by Theorem~\ref{thmIP}. 
  The definition of this flow is of course motivated by the well-known time evolution of spectral data for spatially decaying classical solutions of the Camassa--Holm equation as well as multi-peakons; see \cite[Section~6]{besasz98}.
   In fact, it reduces to the conservative multi-peakon flow in \cite{hora07a, ConservMP} when restricted to the set $\Peakons$.
 
   \begin{remark}
    For a fixed measure $\rho_0$ in $\SM_0$, the isospectral set $\Iso{\rho_0}$ is clearly invariant under the conservative Camassa--Holm flow.
    With respect to the bijection in~\eqref{eqnIsoSpecTrans}, the conservative Camassa--Holm flow on $\Iso{\rho_0}$ turns into the simple linear flow on $\Lambda(\rho_0)$ given by
  \begin{align}
    \vartheta(\lambda,t) = \E^{-\frac{t}{2\lambda}} \vartheta(\lambda,0).
  \end{align}
  Obviously, it becomes an equally simple linear flow on the set in~\eqref{eqnLambdasigma} when the topological support $\sigma_0$ of the measure $\rho_0$ is discrete.  
  More precisely, in this case one has 
   \begin{align}
    \gamma(\lambda,t) = \E^{-\frac{t}{2\lambda}} \gamma(\lambda,0)
  \end{align}
  for all eigenvalues $\lambda\in\sigma_0$. 
  This coincides with the known time evolution of norming constants for conservative multi-peakons \cite{besasz00, ConservMP} when $\sigma_0$ is a finite set. 
 \end{remark}  
 
  Before we prove the following continuity property of the conservative Camassa--Holm flow, let us point out that the sets $\CHdom(R)$ defined in~\eqref{eqnCHdomR} are not invariant. 
 
 \begin{proposition}\label{propFlowCont}
  The conservative Camassa--Holm flow $\Phi$ is continuous when restricted to $\CHdom(R)\times\R$ for every $R>0$.  
 \end{proposition}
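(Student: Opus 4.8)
The plan is to reduce continuity of the flow $\Phi$ on $\CHdom(R)\times\R$ to the continuity of the spectral transform established in Proposition~\ref{propCont} and Corollary~\ref{corCont}, by showing that the time-evolution map on spectral measures is jointly continuous in the appropriate sense. Concretely, suppose $(u_k,\mu_k)\to(u,\mu)$ in $\CHdom$ with all pairs in $\CHdom(R)$, and $t_k\to t$ in $\R$; I must show $\Phi^{t_k}(u_k,\mu_k)\to\Phi^{t}(u,\mu)$ in $\CHdom$. Writing $\rho_k$, $\rho$ for the spectral measures of $(u_k,\mu_k)$, $(u,\mu)$ and letting $\rho_k^{t_k}$, $\rho^t$ denote the evolved measures defined via~\eqref{eqnSMEvo}, the goal is to verify that $\rho_k^{t_k}\to\rho^t$ in the sense of Corollary~\ref{corCont}~\ref{itmContiv}, and then invoke Theorem~\ref{thmIP} and Proposition~\ref{propCont} to conclude.

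The crucial simplification comes from the uniform spectral gap: by Proposition~\ref{prop:SpecGap}, the bound $E(u_k,\mu_k)\le R$ forces $\lambda_0(\sigma_k)\ge \tfrac{1}{\sqrt2\,R}=:\delta>0$ uniformly in $k$, and the same gap persists for $(u,\mu)$. First I would record that the evolution factor $\E^{-t/(2\lambda)}$ is, on the region $|\lambda|\ge\delta$, a continuous and bounded function of $\lambda$ for each fixed $t$ in a compact time-interval, with the weight $\tfrac{\chi(\lambda)}{1+\lambda^2}\E^{-t/(2\lambda)}$ again of the admissible form in Corollary~\ref{corCont}~\ref{itmContiv}. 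The key point is that the evolution preserves the massless gap around zero: since $\rho_k$ and $\rho$ have no mass on $(-\delta,\delta)$, neither do $\rho_k^{t_k}$ and $\rho^t$, so all the integrals appearing in~\eqref{eqnContSD2R} only see the region $|\lambda|\ge\delta$ where the exponential factor is harmless. Moreover, I would check that the evolved pairs again lie in some $\CHdom(R')$ with $R'$ uniform in $k$ and in $t$ ranging over a compact set, using Proposition~\ref{prop:SpecGap} together with the fact that the gap $\lambda_0$ is exactly preserved by~\eqref{eqnSMEvo}; this is what legitimizes applying the sharpened criterion of Corollary~\ref{corCont} to the evolved sequence.

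The main technical step is then the convergence
\begin{align*}
  \int_{\R}\frac{\chi(\lambda)}{1+\lambda^2}\,\E^{-\frac{t_k}{2\lambda}}\,d\rho_k(\lambda)
   \longrightarrow
  \int_{\R}\frac{\chi(\lambda)}{1+\lambda^2}\,\E^{-\frac{t}{2\lambda}}\,d\rho(\lambda)
\end{align*}
for every admissible $\chi$. I would split the difference in the standard way: replace $t_k$ by $t$ first (controlling the error by $\sup_{|\lambda|\ge\delta}\bigl|\E^{-t_k/(2\lambda)}-\E^{-t/(2\lambda)}\bigr|$, which tends to zero by uniform continuity of the exponential on the compact set $\{1/(2\lambda):|\lambda|\ge\delta\}$, against the uniformly bounded total mass $\int (1+\lambda^2)^{-1}d\rho_k$), and then pass from $\rho_k$ to $\rho$ using that $\lambda\mapsto\chi(\lambda)\E^{-t/(2\lambda)}$ extends to a continuous function on $\R$ with a finite limit as $|\lambda|\to\infty$ (here one uses $\E^{-t/(2\lambda)}\to1$ as $|\lambda|\to\infty$), so that Corollary~\ref{corCont}~\ref{itmContiv} applied to $(u_k,\mu_k)\to(u,\mu)$ yields the required convergence.

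The step I expect to be the main obstacle is the uniform control of the time-factor near the spectral gap edge combined with the total-mass bound: one must ensure that the error from $t_k\neq t$ does not blow up as $\lambda$ approaches $\pm\delta$, and that the total masses $\int(1+\lambda^2)^{-1}d\rho_k$ stay bounded. The first is handled by the uniform gap $\delta$, which keeps $1/(2\lambda)$ in a fixed compact set; the second follows because convergence in $\CHdom$ already gives convergence of $\int(1+\lambda^2)^{-1}d\rho_k$ via Proposition~\ref{propCont}~\ref{itmContiii} (taking $\chi\equiv1$), hence uniform boundedness. Once these uniform bounds are in place, the two-step splitting argument closes, and applying the inverse direction of Proposition~\ref{propCont} (equivalently Corollary~\ref{corCont}) to the evolved data gives convergence of $\Phi^{t_k}(u_k,\mu_k)$ to $\Phi^{t}(u,\mu)$ in $\CHdom$, which is the assertion.
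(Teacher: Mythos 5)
Your proposal is correct and follows essentially the same route as the paper's proof: reduce to convergence of the evolved spectral measures in the sense of Corollary~\ref{corCont}~\ref{itmContiv}, use the uniform spectral gap from Proposition~\ref{prop:SpecGap} (which also places the evolved pairs in a common $\CHdom(R')$, since the gap is exactly preserved by~\eqref{eqnSMEvo}), and split the difference into a time-shift error controlled uniformly on $\lbrace|\lambda|\geq\delta\rbrace$ plus a term handled by testing against a continuous function that agrees with $\chi(\lambda)\E^{-\nicefrac{t}{2\lambda}}$ outside the gap. One minor slip: from the two-sided estimate~\eqref{eq:lam0est}, the bound $E(u_k,\mu_k)\leq R$ yields $\lambda_0(\sigma_k)\geq\nicefrac{1}{6R}$ (via the left inequality), not $\nicefrac{1}{\sqrt{2}R}$; this changes only the value of $\delta$ and leaves your argument intact.
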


 \begin{proof}
  Let $t_k\in\R$ be a sequence that converges to $t$ and suppose that the sequence $(u_k,\mu_k)$ of pairs in $\CHdom(R)$ converges to $(u,\mu)$ in $\CHdom$ so that Proposition~\ref{propCont}~\ref{itmContiii} holds for the corresponding spectral measures. 
  From the two-sided estimate in Proposition~\ref{prop:SpecGap}, we infer that the corresponding spectra have a uniform gap around zero with $\lambda_0(\sigma_k)\geq   1/(6R) =: \varepsilon$ and that the images $\Phi^{t_k}(u_k,\mu_k)$ belong to $\CHdom(9R)$ for all $k\in\N$.
 Given any continuous function $\chi\in C(\R)$ such that the limit of $\chi(\lambda)$ as $|\lambda|\rightarrow\infty$ exists and is finite, we choose a continuous function $\tau \in C(\R)$ such that 
  \begin{align*}
  \tau(\lambda) =  \chi(\lambda) \E^{-\frac{t}{2\lambda}}   
 \end{align*}
 when $|\lambda|\geq\varepsilon$, as well as a constant $K\in\R$ such that $|\tau(\lambda)|\leq K$ for all $\lambda\in\R$. 
 One then may estimate 
     \begin{align*} 
  & \biggl|  \int_\R \frac{\chi(\lambda)}{1+\lambda^2}\E^{-\frac{t_k}{2\lambda}} d\rho_k(\lambda)  -  \int_\R   \frac{\chi(\lambda)}{1+\lambda^2} \E^{-\frac{t}{2\lambda}} d\rho(\lambda) \biggr| \\
   & \qquad \leq   K  \frac{|t_k-t|}{2\varepsilon} \E^{\frac{|t_k-t|}{2\varepsilon}} \int_\R \frac{d\rho_k(\lambda)}{1+\lambda^2}   + 
   \biggl|\int_\R \frac{\tau(\lambda)}{1+\lambda^2} d\rho_k(\lambda)  - 
  \int_\R   \frac{\tau(\lambda)}{1+\lambda^2} d\rho(\lambda) \biggr|
 \end{align*}
 for every $k\in\N$.
 Since it follows readily from Proposition~\ref{propCont}~\ref{itmContiii} that the right-hand side converges to zero as $k\rightarrow\infty$, we infer that  
    \begin{align*}
  \int_\R   \frac{\chi(\lambda)}{1+\lambda^2} \E^{-\frac{t_k}{2\lambda}} d\rho_k(\lambda) & \rightarrow \int_\R   \frac{\chi(\lambda)}{1+\lambda^2} \E^{-\frac{t}{2\lambda}} d\rho(\lambda). 
 \end{align*}
   In view of Corollary~\ref{corCont}, this implies that $\Phi^{t_k}(u_k,\mu_k)$ converges to $\Phi^t(u,\mu)$ in $\CHdom$. 
 \end{proof}

In order to state the main results of this section, let us first specify a precise meaning for weak solutions to the two-component Camassa--Holm system~\eqref{eqnOurCH}.

\begin{definition}\label{defGCS}
A {\em global conservative solution} of the two-component Camassa--Holm system~\eqref{eqnOurCH} with initial data $(u_0,\mu_0)\in\CHdom$ is a continuous curve 
\begin{align}
  \gamma\colon t\mapsto(u(\ledot,t),\mu(\ledot,t))
\end{align}
from $\R$ to $\CHdom$ with $\gamma(0)=(u_0,\mu_0)$ that satisfies~\eqref{eqnOurCH} in the sense that for every test function $\varphi\in C_\cc^\infty(\R\times\R)$ one has  
 \begin{align}\label{eqnCHsysweak1}
 & \int_\R \int_\R u(x,t) \varphi_t(x,t) + \biggl(\frac{u(x,t)^2}{2} + P(x,t) \biggr) \varphi_x(x,t) \,dx \,dt = 0, \\
 \begin{split} 
 &  \int_\R \int_\R \varphi_t(x,t) + u(x,t) \varphi_x(x,t) \,d\mu(x,t) \,dt  \\ 
 &   \qquad\qquad\qquad\qquad = 2\int_\R \int_\R u(x,t)\biggl(\frac{u(x,t)^2}{2} - P(x,t) \biggr) \varphi_x(x,t) \,dx \,dt,
 \end{split}
 \end{align}
 where the function $P$ on $\R\times\R$ is given by 
 \begin{align}\label{eq:Pdef}
  P(x,t) =  \frac{1}{4} \int_\R \E^{-|x-s|} u(s,t)^2 ds +  \frac{1}{4} \int_\R \E^{-|x-s|} d\mu(s,t).
 \end{align}
 \end{definition}
 
 Let us point out that continuity of the curve $\gamma$ guarantees that the function $u$ is at least continuous on $\R\times\R$ in view of Lemma~\ref{lem:topD}. 
 
\begin{theorem}\label{thmWeakSol}
  For every pair $(u_0,\mu_0)\in\CHdom$, the integral curve $t\mapsto \Phi^t(u_0,\mu_0)$ is a global conservative solution of the two-component Camassa--Holm system~\eqref{eqnOurCH} with initial data $(u_0,\mu_0)$.
 \end{theorem} 

 \begin{proof}
  We first note that the integral curve $t\mapsto\Phi^t(u_0,\mu_0)$ is continuous by Proposition~\ref{propFlowCont} and that $\Phi^0(u_0,\mu_0)=(u_0,\mu_0)$ by definition. 
   The remaining claim is known if the pair $(u_0,\mu_0)$ is a multi-peakon profile; see~\cite[Section~4]{ConservCH} or~\cite[Section~5]{ConservMP}.
 Otherwise, let $(u_{k,0},\mu_{k,0})$ be a sequence of multi-peakon profiles that approximate the initial data $(u_0,\mu_0)$ in the sense of Proposition~\ref{prop:Peakons}. 
The corresponding pairs 
\begin{align*}
  (u_k(\ledot,t),\mu_k(\ledot,t)) = \Phi^t(u_{k,0},\mu_{k,0})
  \end{align*}
   obtained from the integral curves $t\mapsto \Phi^t(u_{k,0},\mu_{k,0})$ then satisfy   
 \begin{align}\label{eqnWSk1}
 & \int_\R \int_\R u_k(x,t) \varphi_t(x,t) + \biggl(\frac{u_k(x,t)^2}{2} + P_k(x,t) \biggr) \varphi_x(x,t) \,dx \,dt = 0, \\
 \begin{split} \label{eqnWSk2}
 &  \int_\R \int_\R \varphi_t(x,t) + u_k(x,t) \varphi_x(x,t) \,d\mu_k(x,t) \,dt  \\ 
 &   \qquad\qquad\qquad\qquad = 2\int_\R \int_\R u_k(x,t)\biggl(\frac{u_k(x,t)^2}{2} - P_k(x,t) \biggr) \varphi_x(x,t) \,dx \,dt,
 \end{split}
 \end{align}
 for every test function $\varphi\in C_\cc^\infty(\R\times\R)$, where the function $P_k$ is given by 
  \begin{align*}
  P_k(x,t) =  \frac{1}{4} \int_\R \E^{-|x-s|} u_k(s,t)^2 ds +  \frac{1}{4} \int_\R \E^{-|x-s|} d\mu_k(s,t).
 \end{align*}
 Lemma~\ref{lem:umuEST}, the relation~\eqref{eqnLCPars02} in Corollary~\ref{cor:TraceFla}, our definition of the flow and Proposition~\ref{prop:Peakons} guarantee the uniform bounds
 \begin{align*}
  \E^{-x}u_k(x,t)^2  \leq \int_\R \E^{-s} d\mu_k(s,t) \leq \int_\R \E^{-\frac{t}{2\lambda}} \frac{d\rho_{k,0}(\lambda)}{\lambda^2} \leq \E^{\frac{|t|}{2\lambda_0(\sigma_0)}} \int_\R \frac{d\rho_0(\lambda)}{\lambda^2},
 \end{align*}
 where $\rho_{k,0}$ and $\rho_0$ are the spectral measures corresponding to the pairs $(u_{k,0},\mu_{k,0})$ and $(u_0,\mu_0)$ respectively. 
 Similarly, the definition of $P_k$ gives the uniform bound  
 \begin{align*}
   \E^{-x} P_k(x,t) \leq \int_\R \E^{-s}d\mu_{k}(s,t) \leq   \E^{\frac{|t|}{2\lambda_0(\sigma_0)}} \int_\R \frac{d\rho_0(\lambda)}{\lambda^2}. 
 \end{align*}
 As the sequence $(u_{k,0},\mu_{k,0})$ has a uniform spectral gap, it belongs to $\CHdom(R)$ for some $R>0$, so that we can infer from Proposition~\ref{propFlowCont} that the pairs $(u_k(\ledot,t),\mu_k(\ledot,t))$ converge to 
 \begin{align*}
   (u(\ledot,t),\mu(\ledot,t)) = \Phi^t(u_0,\mu_0)
 \end{align*}
 in $\CHdom$ for every fixed $t\in\R$. 
 It then follows from Lemma~\ref{lem:topD} that the functions $u_k(\ledot,t)$ converge to $u(\ledot,t)$ locally uniformly and that the measures $\mu_k(\ledot,t)$ converge to $\mu(\ledot,t)$ in the sense of distributions.
 Moreover, the sequence of functions $P_k(\ledot,t)$ also converges locally uniformly to the function $P(\ledot,t)$ defined in~\eqref{eq:Pdef} by Corollary~\ref{cor:convPk}.  
 Together with the bounds established above, this allows us to pass to the limit $k\rightarrow\infty$ in~\eqref{eqnWSk1} and~\eqref{eqnWSk2}, which proves that the integral curve $t\mapsto \Phi^t(u_0,\mu_0)$ is a global conservative solution of the two-component Camassa--Holm system~\eqref{eqnOurCH}. 
\end{proof}

    \begin{remark}\label{rem:BressanUniq}
 The question about uniqueness of conservative weak solutions to the Camassa--Holm equation and its two-component generalization is a subtle one. 
 Uniqueness of conservative weak solutions to the Camassa--Holm equation has been established in~\cite{bcz15} (see also~\cite{bre16}) under the assumption that the initial data $u_0$ belongs to $H^1(\R)$ (notice that multi-peakon profiles are contained in $H^1(\R)$). 
 However, the notion of weak solution employed in~\cite{bcz15} is stronger than ours, so that this uniqueness result does not apply in our case. 
On the other hand, we have seen in the proof of Theorem~\ref{thmWeakSol} that our weak solution of the two-component Camassa--Holm system~\eqref{eqnOurCH} can be approximated by a sequence of conservative multi-peakon solutions in a certain way; compare \cite{hora06, hora08}. Moreover, the set of all multi-peakon profiles $\Peakons$ is clearly invariant under the conservative Camassa--Holm flow $\Phi$ and it has been proved in~\cite{ConservMP} that the conservative Camassa--Holm flow on $\Peakons$ gives rise to the same conservative multi-peakon solutions that had been constructed before in~\cite{brco07} and~\cite{hora07a,hora07}.  
Thus, Theorem~\ref{thmWeakSol} asserts that the conservative Camassa--Holm flow on $\Peakons$ extends continuously and uniquely to bounded subsets of $\CHdom$.
This can be viewed as a well-posedness result of the two-component Camassa--Holm system on $\CHdom$. 
  \end{remark}
      
      \begin{remark}\label{rem:Optimal}
       It is possible to extend the flow defined by~\eqref{eqnSMEvo} to a slightly larger class of measures. 
       More precisely, this flow is still well-defined on the set of all positive Poisson integrable Borel measures $\rho$ on $\R$ such that  
\begin{align}\label{eq:optimal}
\int_{(-1,1)}\E^{\frac{t}{2\lambda}}d\rho(\lambda) <\infty
\end{align}
holds for all $t\in\R$.
  Obviously, this condition requires the measures $\rho$ to decay exponentially near zero. 
 For this reason, our assumption that zero does not belong to the support of the measures $\rho$ (which is equivalent to the growth restriction in~\eqref{eqnMdef+}) appears to be rather close to optimal.
 Relaxations of condition~\eqref{eqnMdef+} corresponding to this extension of the flow would most likely not significantly expand the phase space. 
 Unfortunately, despite recent progress in the study of high and low energy asymptotics of spectral measures (see~\cite{AsymCS} for example), a complete characterization of coefficients that give rise to spectral measures satisfying~\eqref{eq:optimal} seems to be out of reach. 
 However, even with such a characterization, it is not obvious whether the time evolution~\eqref{eqnSMEvo} for measures would then still give rise to weak solutions of the two-component Camassa--Holm system~\eqref{eqnOurCH}.
\end{remark}

     \begin{remark}\label{remEACons}
     The functional $E$ defined in~\eqref{eq:NormDef} satisfies the two-sided estimate 
 \begin{align}
     \frac{1}{6\lambda_0(\sigma)} \leq E(\Phi^t(u_0,\mu_0)) \leq \frac{\sqrt{2}}{\lambda_0(\sigma)}
 \end{align}
 for all times $t\in\R$, where $\lambda_0(\sigma)$ is the size of the spectral gap around zero as defined in~\eqref{def:lam0} of the underlying spectrum $\sigma$ for the pair $(u_0,\mu_0)$.
  In particular, this allows to give a uniform two-sided estimate for $E(\Phi^t(u_0,\mu_0))$ that only depends on the corresponding quantity at initial time. 
  More precisely, one has   
  \begin{align}\label{eq:ConservE}
\frac{1}{6\sqrt{2}}E(u_0,\mu_0) \le E(\Phi^t(u_0,\mu_0))\le 6\sqrt{2} E(u_0,\mu_0)
\end{align}
 for all times $t\in\R$. 
 Since the gap around zero of the essential spectrum is conserved as well, the corresponding quantity when the supremum in~\eqref{eq:NormDef} is replaced by a limes superior at $\infty$ obeys a similar two-sided estimate; see Remark~\ref{remEssSpecGap}. 
   \end{remark}    
      
    We have seen that invariance of the spectrum allows us to control our weak solutions. 
   In the following, we are going to list a number of further properties that are preserved under the flow for the same reason.
  To this end, let us denote with $\CHdom_\infty$ the set of all pairs $(u,\mu)$ in $\CHdom$ such that
  \begin{align}
    \lim_{x\rightarrow\infty} \E^{x}\biggl(\int_{x}^{\infty}\E^{-s}(u(s) + u'(s))^2ds + \int_{x}^{\infty}\E^{-s}d\dip(s)\biggr) = 0.
  \end{align}
  Furthermore, for each $p>1$, let $\CHdom_p$ be the set of all pairs $(u,\mu)$ in $\CHdom$ such that
  \begin{align}\label{eqnSpint}
    \int_{\R} \E^{\frac{px}{2}}\biggl(\int_{x}^{\infty}\E^{-s}(u(s) + u'(s))^2ds + \int_{x}^{\infty}\E^{-s}d\dip(s)\biggr)^{\nicefrac{p}{2}} dx < \infty.
  \end{align}
 According to Proposition~\ref{prop:Sp-classes}~\ref{prop:Sp-classes-i}--\ref{prop:Sp-classes-ii}, the subsets $\CHdom_\infty$ and $\CHdom_p$ for $p>1$ can be completely characterized in terms of the corresponding spectral data. 
 More precisely, we know that a pair $(u,\mu)$ in $\CHdom$ belongs to $\CHdom_\infty$ if and only if the spectrum $\sigma$ is purely discrete and that $(u,\mu)$ belongs to $\CHdom_p$ with $p>1$ if and only if the spectrum $\sigma$ satisfies 
   \begin{align}\label{eqnSinSpCH02}
    \sum_{\lambda\in\sigma} \frac{1}{|\lambda|^p} < \infty.
  \end{align}
  Consistent with this, we will use condition~\eqref{eqnSinSpCH02} on the spectrum to define the subsets $\CHdom_p$ for all $p>0$ (let us stress however that already for $p=1$ the subset $\CHdom_p$ does not allow a characterization by means of the integral~\eqref{eqnSpint} and in fact, its characterization remains an open and nontrivial problem; see~\cite{AJPR,rowo20}). 
  As the spectrum is preserved under the conservative Camassa--Holm flow, all these subsets are invariant and moreover, the trace formulas in Proposition~\ref{prop:Sp-classes} give rise to conserved quantities. 
      
   \begin{corollary}\label{cor:CHonDp}
     The following assertions hold:
\begin{enumerate}[label=(\roman*), ref=(\roman*), leftmargin=*, widest=iii]
   \item\label{itmCHonDpi} The subset $\CHdom_\infty$ is invariant under the conservative Camassa--Holm flow. 
   \item\label{itmCHonDpii}     For each $p>0$, the subset $\CHdom_p$ is invariant under the conservative Camassa--Holm flow.
   \item\label{itmCHonDpiii} The functional 
 \begin{align}\label{eqnCQH1}
     (u,\mu) & \mapsto \int_\R d\mu = \|u\|^2_{H^1(\R)}  + \int_\R d\dip
 \end{align}
 on $\CHdom_2$ is preserved under the conservative Camassa--Holm flow.
  \item\label{itmCHonDpiv} The functional 
 \begin{align}\label{eqnCQaver}
     (u,\mu) & \mapsto \lim_{x\rightarrow\infty} \int_{-\infty}^x u(s) ds
 \end{align}
 on $\CHdom_1$ is preserved under the conservative Camassa--Holm flow. 
 \item\label{itmCHonDpv} If the initial data $(u_0,\mu_0)$ belongs to $\CHdom_1$, then the measure $\dip(\ledot,t)$ corresponding to $\Phi^t(u_0,\mu_0)$ is singular (with respect to the Lebesgue measure) for all times $t\in\R$.
  \end{enumerate}
 \end{corollary}
  
\begin{proof}
 Since the sets $\CHdom_\infty$ and $\CHdom_p$ for $p>0$ can be characterized via the spectrum, items~\ref{itmCHonDpi} and~\ref{itmCHonDpii} follow from invariance of the spectrum. 
 That the functionals in items~\ref{itmCHonDpiii} and~\ref{itmCHonDpiv} are preserved follows from their representation via the spectrum in Proposition~\ref{prop:Sp-classes}~\ref{itmS2}--\ref{itmS1}.
 Item~\ref{itmCHonDpv} follows from Proposition~\ref{prop:Sp-classes}~\ref{itmS1}.
\end{proof}  
  
  The sum in~\eqref{eqnSinSpCH02} is clearly preserved under the conservative Camassa--Holm flow. 
  Since this sum can be related to the integral in~\eqref{eqnSpint} when $p>1$, this allows one to control the functional $E_p$ defined by 
\begin{align}\label{eq:EumuSp}
E_p(u,\mu) =    \int_\R \E^{\frac{px}{2}}\biggl(\int_{x}^{\infty}\E^{-s}(u(s) + u'(s))^2ds + \int_{x}^{\infty}\E^{-s}d\dip(s)\biggr)^{\nicefrac{p}{2}} dx,
\end{align}
 which gives rise to a new family of almost conserved quantities.

\begin{corollary}\label{cor:EpConserved}
For each $p>1$, there is a positive constant $C_p>0$ such that\footnote{The constant $C_p$ can be made effective. This requires making the two-sided estimates on the Schatten--von Neumann norms of the integral operators in~\cite[Theorem~3.3]{AJPR} explicit; see Appendix~C in ArXiv version~2 (\arxiv{2106.13138v2}) of~\cite{DSpec}.}
\begin{align}\label{eq:ConservEp}
\frac{1}{C_p}E_p(u_0,\mu_0)\le E_p(\Phi^t(u_0,\mu_0))\le C_p E_p(u_0,\mu_0)
\end{align}
for all times $t\in \R$.
 \end{corollary}
   
 \begin{proof}
 The two-sided estimate in the claim follows along the same lines of reasoning as in the proof of Proposition~\ref{prop:SpecGap}. Namely, since $\sigma$ coincides with the spectrum of the generalized indefinite string $(L,\wt\omega,\wt\dip)$, one needs to use~\cite[Theorem~4.6~(iv)]{DSpec} together with two-sided estimates on the Schatten--von Neumann norms of the integral operators there, which can be found in~\cite[Proof of Theorem~3.3]{AJPR}.
 \end{proof}  
   
   Let us recall that $\CHdom^+$ was defined as the set of all pairs $(u,\mu)$ in $\CHdom$ such that $\omega$ is a positive Borel measure on $\R$ and $\dip$ vanishes identically. 
  It follows immediately from Proposition~\ref{propDefinite} that the subset $\CHdom^+$ is invariant under the conservative Camassa--Holm flow; compare \cite[Corollary~3.3]{coes98}. 
  Furthermore,  we showed in Lemma~\ref{lem:D+} that $\CHdom^+$ can be identified with a certain subset of $H^1_{\loc}(\R)$ by means of the projection $(u,\mu)\mapsto u$. 
  The corresponding flow on this subset of $H^1_{\loc}(\R)$ gives rise to weak solutions $u$ of the Camassa--Holm equation~\eqref{eqnCH} in the sense that for every test function $\varphi\in C_\cc^\infty(\R\times\R)$ one has  
 \begin{align}
  \int_\R \int_\R u(x,t) \varphi_t(x,t) + \biggl(\frac{u(x,t)^2}{2} + P(x,t) \biggr) \varphi_x(x,t) \,dx \,dt = 0,
 \end{align}
 where the function $P$ is given by 
 \begin{align}
  P(x,t) =  \frac{1}{2} \int_\R \E^{-|x-s|} u(s,t)^2 ds + \frac{1}{4} \int_\R \E^{-|x-s|} u_x(s,t)^2 ds. 
 \end{align}
 In fact, this is simply what~\eqref{eqnCHsysweak1} and~\eqref{eq:Pdef} in Definition~\ref{defGCS} reduce to when the global conservative solution belongs to $\CHdom^+$.
 We note here that global weak solutions in this sense have been obtained before in~\cite{coes98c, como00}, however, under additional spatial decay assumptions at $\infty$ and by different approaches. 
 In the more challenging indefinite case, existence of global weak solutions with various kinds of spatial asymptotics, although not covering the phase space $\CHdom$, have been established in~\cite{brco07, hora07, grhora12, grhora12b} by means of elaborate transformations to Lagrangian variables. 
   
   \begin{remark}\label{rem:conslaw+infty}
  If the initial data $(u_0,\mu_0)$ belongs to $\CHdom^+$, then the two-sided estimate in Lemma~\ref{lem:D+} shows that the $L^\infty(\R)$ norm of $u+u_x$ is an almost conserved quantity. 
  More precisely, one has 
 \begin{align}\label{eq:lawW1infty}
     \frac{1}{12\sqrt{2}}\|u_0+u'_0\|_{L^\infty(\R)} \leq \|(u+u_x)(\ledot,t)\|_{L^\infty(\R)} \leq 12\sqrt{2}\|u_0+u'_0\|_{L^\infty(\R)}
 \end{align}
 for all times $t\in\R$.
\end{remark}
   
  As mentioned already above, the subset $\CHdom^+$ is invariant under the conservative Camassa--Holm flow in view of Proposition~\ref{propDefinite}.
  According to Corollary~\ref{corsigposS1}, so is the subset $\CHdom^+_1$ defined by 
  \begin{align}\label{eqnCHdomP1}
     \CHdom^+_1 = \Bigl\{(u,\mu)\in\CHdom^+\,\Bigl|\, \lim_{x\rightarrow-\infty} \E^{-x}(u(x)+u'(x)) \text{ exists and is finite}\Bigr.\Bigr\}.
  \end{align}
   
    \begin{remark}
    If the initial data $(u_0,\mu_0)$ belongs to $\CHdom^+_1$, then the limit in~\eqref{eqnCHdomP1} is given by 
  \begin{align}
\lim_{x\to -\infty}\E^{-x}(u(x,t) + u_x(x,t)) =\int_0^\infty  \frac{d\rho(\lambda,t)}{\lambda} =\int_0^\infty \E^{-\frac{t}{2\lambda}} \frac{d\rho(\lambda,0)}{\lambda}
 \end{align}
 in view of Corollary~\ref{corsigposS1}, where $\rho(\ledot,t)$ is the corresponding spectral measure of the solution at time $t$. 
 This implies that the limit evolves according to 
 \begin{align}\begin{split}
   \frac{d}{dt} \lim_{x\to -\infty}\E^{-x}(u(x,t) + u_x(x,t)) & = -\frac{1}{2} \int_0^\infty \E^{-\frac{t}{2\lambda}} \frac{d\rho(\lambda,0)}{\lambda^2} = -\frac{1}{2} \int_0^\infty  \frac{d\rho(\lambda,t)}{\lambda^2} \\ & = - \frac{1}{2} \int_\R \E^{-x}u(x,t)^2dx - \frac{1}{2} \int_\R \E^{-x}d\mu(x,t),
 \end{split}\end{align}
 where we also used relation~\eqref{eqnLCPars02} in the last step. 
\end{remark}
   
   Next, we denote with $\CHdom^+_{1,\infty}$ the set of all pairs $(u,\mu)$ in $\CHdom^+_1$ such that
  \begin{align}
    \lim_{x\rightarrow\infty} u(x)+u'(x) = 0.
  \end{align}
  Furthermore, for each $p>\nicefrac{1}{2}$, we let $\CHdom^+_{1,p}$ be the set of all pairs $(u,\mu)$ in $\CHdom^+_1$ such that $u+u'\in L^p(\R)$.
 According to Corollary~\ref{cor:Sp-classes}, we have that $\CHdom^+_{1,\infty}=\CHdom^{+}_1\cap\CHdom_\infty$ as well as $\CHdom^+_{1,p}=\CHdom^{+}_1\cap\CHdom_p$ for $p>\nicefrac{1}{2}$, which implies that all these sets are invariant under the flow~\eqref{eqnSMEvo}.
 We will again use the latter relation to define $\CHdom^+_{1,p}=\CHdom^{+}_1\cap\CHdom_p$ for all $p>0$, without having an explicit characterization in terms of the coefficients for these sets when $p\leq\nicefrac{1}{2}$. 

   \begin{corollary}\label{cor:CHonDp+}
     The following assertions hold:
\begin{enumerate}[label=(\roman*), ref=(\roman*), leftmargin=*, widest=iii]
      \item The subset $\CHdom^{+}_{1,\infty}$ is invariant under the conservative Camassa--Holm flow. 
   \item\label{itmCHonDP+ii}     For each $p>0$, the subset $\CHdom^{+}_{1,p}$ is invariant under the conservative Camassa--Holm flow.
  \item The functional 
 \begin{align}\label{eqnCQH1+}
     (u,\mu) & \mapsto \|u\|^2_{H^1(\R)} 
 \end{align}
 on $\CHdom_{1,2}^+$ is preserved under the conservative Camassa--Holm flow.
  \item The functional 
 \begin{align}\label{eqnCQaver+}
     (u,\mu) & \mapsto  \int_{\R} d\omega
 \end{align}
 on $\CHdom_{1,1}^+$ is preserved under the conservative Camassa--Holm flow.
 \item If the initial data $(u_0,\mu_0)$ belongs to $\CHdom_{1,1/2}^+$, then the measure $\omega(\ledot,t)$ corresponding to $\Phi^t(u_0,\mu_0)$ is singular (with respect to the Lebesgue measure) for all times $t\in\R$.
   \end{enumerate}
 \end{corollary}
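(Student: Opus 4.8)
The entire statement is a packaging of the isospectrality of the flow together with the spectral characterizations established earlier. The plan is to reduce each assertion to the single fact that the flow $\Phi$ preserves the spectrum $\sigma$ and, more precisely, that for a solution $(u(\ledot,t),\mu(\ledot,t))$ the associated spectral measures at different times are mutually absolutely continuous with Radon--Nikodym derivative $\lambda\mapsto\E^{-\frac{t}{2\lambda}}$. Since every pair under consideration lies in $\CHdom^+$, whose spectrum satisfies $\sigma\subseteq(0,\infty)$ and stays bounded away from zero by the spectral gap, this density is bounded above and below by positive constants on $\sigma$ for each fixed $t$; hence all of the summability and integrability conditions defining our subsets are genuinely invariant.

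First I would record the two invariance facts noted just before the statement: $\CHdom^+$ is invariant by Proposition~\ref{propDefinite} (positivity of $\sigma$ is a spectral property), and $\CHdom^+_1$ is invariant by Corollary~\ref{corsigposS1} (the condition $\int_0^\infty\lambda^{-1}\,d\rho<\infty$ is preserved under the bounded density just described). With these in hand, assertions (i) and (ii) follow at once from the set identities $\CHdom^+_{1,\infty}=\CHdom^{+}_1\cap\CHdom_\infty$ and $\CHdom^+_{1,p}=\CHdom^{+}_1\cap\CHdom_p$, supplied by Corollary~\ref{cor:Sp-classes} for $p>\nicefrac{1}{2}$ and taken as the definition for $0<p\le\nicefrac{1}{2}$: each factor is invariant ($\CHdom_\infty$ and $\CHdom_p$ by Corollary~\ref{cor:CHonDp}, $\CHdom^+_1$ as above), and an intersection of invariant sets is invariant.

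For the conserved functionals in (iii) and (iv) I would invoke the trace formulas. On $\CHdom^+_{1,2}\subseteq\CHdom^+$ the measure $\dip$ vanishes, so Corollary~\ref{cor:CHonDp}~(iii) specializes to preservation of $\|u\|_{H^1(\R)}^2=\int_\R d\mu$; equivalently, by Proposition~\ref{prop:Sp-classes}~\ref{itmS2} this equals $\frac{1}{2}\sum_{\lambda\in\sigma}\lambda^{-2}$, a quantity depending only on the invariant spectrum. Likewise, on $\CHdom^+_{1,1}$ Corollary~\ref{cor:Sp-classes}~\ref{itmSpplusiii} gives $\int_\R d\omega=\sum_{\lambda\in\sigma}\lambda^{-1}$, again a function of $\sigma$ alone, hence preserved. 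Finally, for (v) the solution stays in $\CHdom^+_{1,1/2}$ for all times by (ii), which means $\sum_{\lambda\in\sigma}|\lambda|^{-1/2}<\infty$ at every time, so Corollary~\ref{cor:Sp-classes}~\ref{itmSpplusiv} yields singularity of $\omega(\ledot,t)$ for each $t$.

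There is no substantial analytic difficulty; the only point requiring care is the bookkeeping that ensures the standing hypotheses of Corollary~\ref{cor:Sp-classes}---positive spectrum together with the conditions of Corollary~\ref{corsigposS1}, i.e. membership in $\CHdom^+_1$---continue to hold along the entire orbit, which is precisely the invariance of $\CHdom^+_1$ recorded above. Once that is granted, each item is a direct translation of a time-independent spectral quantity.
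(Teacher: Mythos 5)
Your proposal is correct and follows exactly the route the paper intends: the paper leaves this corollary without an explicit proof precisely because it is the packaging you describe — invariance of $\CHdom^+_1$ via Corollary~\ref{corsigposS1}, the identities $\CHdom^+_{1,\infty}=\CHdom^{+}_1\cap\CHdom_\infty$ and $\CHdom^+_{1,p}=\CHdom^{+}_1\cap\CHdom_p$ combined with Corollary~\ref{cor:CHonDp}, and the trace formulas of Proposition~\ref{prop:Sp-classes} and Corollary~\ref{cor:Sp-classes} for the conserved functionals and the singularity statement. Your closing remark about verifying that the standing hypotheses of Corollary~\ref{cor:Sp-classes} persist along the orbit is exactly the right bookkeeping point, and your argument for it (the Radon--Nikodym derivative $\E^{-\frac{t}{2\lambda}}$ being bounded above and below on the spectrum thanks to the gap) is sound.
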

 
\begin{proof}
 This follows for the same reasons as in the proof of Corollary~\ref{cor:CHonDp}, one only needs to use Corollary~\ref{cor:Sp-classes} instead of Proposition~\ref{prop:Sp-classes}.
\end{proof} 
 
In conclusion, let us mention that one can improve on Corollary~\ref{cor:CHonDp+}~\ref{itmCHonDP+ii} in a similar way as in Corollary~\ref{cor:EpConserved} (compare also Remark~\ref{rem:conslaw+infty}). 

\begin{corollary}\label{cor:EpConserved+}
   If the initial data $(u_0,\mu_0)$ belongs to $\CHdom_1^+$, then for each $p>\nicefrac{1}{2}$, there is a positive constant $c_p>0$ such that
\begin{align}\label{eq:lawW1p+}
\frac{1}{c_p}\|u_0+u'\|_{L^p(\R)} \le \|(u+u_x)(\ledot,t)\|_{L^p(\R)} \le c_p\|u_0+u_0'\|_{L^p(\R)} 
\end{align}
for all times $t\in \R$.
 \end{corollary}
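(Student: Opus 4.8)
The plan is to reduce the claim, exactly as for Corollary~\ref{cor:EpConserved}, to the fact that the spectral sum $\sum_{\lambda\in\sigma}\lambda^{-p}$ is \emph{conserved}: since $(u,\mu)$ belongs to $\CHdom_1^+$, the spectrum $\sigma$ is positive, and because it is invariant under the conservative Camassa--Holm flow, this sum takes the same value at every time. It therefore suffices to prove a two-sided estimate comparing $\|u+u_x\|_{L^p(\R)}^p$ with $\sum_{\lambda\in\sigma}\lambda^{-p}$, with a comparison constant depending only on $p>\nicefrac{1}{2}$.

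First I would recast the $L^p$ norm in terms of the generalized indefinite string data. Using the definition~\eqref{eqnDefa} of $\wt\Wr$ and substituting $x=\E^y$, one finds
\begin{align*}
\|u+u'\|_{L^p(\R)}^p = \int_0^\infty x^{p-1}\,|\wt\Wr(x)|^p\,dx.
\end{align*}
By Remark~\ref{remEumon}, in the positive case $\wt\Wr$ has a non-decreasing, non-positive representative, so that $\sigma$ coincides with the spectrum of a Krein string (namely $(L,\wt\omega-\wt\omega_0\delta_0,0)$, as in the proof of Corollary~\ref{cor:Sp-classes}, which leaves $\sigma$ unchanged). The qualitative equivalence between finiteness of the integral above and of $\sum_{\lambda\in\sigma}\lambda^{-p}$ is precisely Corollary~\ref{cor:Sp-classes}~\ref{itmSpplusii}; what is needed here is the accompanying quantitative comparison.

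That quantitative two-sided estimate follows along the same lines as Proposition~\ref{prop:SpecGap} and Corollary~\ref{cor:EpConserved}: applying~\cite[Theorem~5.4]{DSpec} to this Krein string together with the two-sided estimates on the Schatten--von Neumann norms of the associated integral operators from~\cite[Proof of Theorem~3.3]{AJPR} produces a constant $C_p>0$, depending only on $p$, with
\begin{align*}
\frac{1}{C_p}\sum_{\lambda\in\sigma}\frac{1}{\lambda^p}\le \|u+u'\|_{L^p(\R)}^p \le C_p\sum_{\lambda\in\sigma}\frac{1}{\lambda^p}.
\end{align*}
Evaluating this at times $t$ and $0$ (each pair being in $\CHdom_1^+$ by invariance) and using that the spectral sum is time-independent then gives
\begin{align*}
\|(u+u_x)(\ledot,t)\|_{L^p(\R)}^p \le C_p\sum_{\lambda\in\sigma}\frac{1}{\lambda^p}\le C_p^2\,\|(u+u_x)(\ledot,0)\|_{L^p(\R)}^p,
\end{align*}
and symmetrically for the lower bound, so that~\eqref{eq:lawW1p+} holds with $c_p=C_p^{\nicefrac{2}{p}}$.

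The main obstacle is this quantitative comparison. The qualitative equivalence is already available through Corollary~\ref{cor:Sp-classes}~\ref{itmSpplusii}, but controlling $\|u+u_x\|_{L^p(\R)}^p$ by the spectral sum with a constant depending on $p$ alone---uniformly in the coefficients and hence in time---requires the effective Schatten--von Neumann bounds of~\cite{AJPR} in the Krein string setting, in parallel with the argument for Corollary~\ref{cor:EpConserved}.
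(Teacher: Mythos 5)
Your proposal is correct and follows essentially the route the paper intends: the paper gives no detailed proof, merely pointing to the argument of Corollary~\ref{cor:EpConserved} and Remark~\ref{rem:conslaw+infty}, i.e.\ invariance of the spectrum plus a quantitative two-sided comparison between the coefficients and the spectral sum obtained from~\cite{DSpec} together with the Schatten--von Neumann estimates of~\cite{AJPR}. Your version fills in the details faithfully --- the change of variables identifying $\|u+u_x\|_{L^p(\R)}^p$ with the Krein-string quantity, the reduction to the string $(L,\wt\omega-\wt\omega_0\delta_0,0)$ exactly as in the proof of Corollary~\ref{cor:Sp-classes}, and the combination at times $t$ and $0$ using conservation of $\sum_{\lambda\in\sigma}\lambda^{-p}$ --- so it is a correct implementation of the same approach.
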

 
\begin{proof}
Since the function in \eqref{eqnEumon} is non-increasing and non-negative on $\R$ whenever $(u,\mu)\in \CHdom_1^+$, it is not difficult to get the estimate
\begin{align*}
 \|u+u_x\|^p_{L^p(\R)}\lesssim_p E_p(u,\mu) \lesssim_p  \|u+u_x\|^p_{L^p(\R)}
\end{align*}
for $p\in [1,\infty)$. Therefore, the corresponding claim is a consequence of Corollary~\ref{cor:EpConserved} whenever $p\in (1,\infty)$. If $p\in (\nicefrac{1}{2},1]$, then one needs to use~\cite[Theorem~5.3~(iii)]{DSpec}, which together with the facts in Appendix~C in ArXiv version~2 of~\cite{DSpec} (see the discussion of~(C.14) there), implies the desired claim.
\end{proof} 
 
\begin{remark}
  Obviously, the larger subsets $\CHdom^+\cap\CHdom_\infty$ and $\CHdom^+\cap\CHdom_p$ for $p>0$ are invariant under the conservative Camassa--Holm flow as well.
  According to Remark~\ref{remNotKreinReg}, these sets can also be described in a simpler way in terms of the coefficients as in Corollary~\ref{cor:Sp-classes} when $p\geq1$. 
  Moreover, the functional in~\eqref{eqnCQH1+} is clearly also preserved on the subset $\CHdom^+\cap\CHdom_2$ and it follows from the trace formula in~\cite[Proposition~3.3]{IsospecCH} that the functional in~\eqref{eqnCQaver+} is preserved on $\CHdom^+\cap\CHdom_1$ too.  
\end{remark}

\begin{remark}
Our phase space $\CHdom$ is not symmetric with respect to the change of variables $x\mapsto -x$, although, as remarked in the introduction, conditions~\eqref{eqnMdef-} and~\eqref{eqnMdef+} can be switched due to the symmetry 
\begin{align}
  (x,t) \mapsto (-x,-t)
\end{align}
of the two-component Camassa–Holm system. 
In the latter situation, the quantity $u+u_x$ in all the obtained (almost) conserved quantities must be replaced by $u-u_x$. 
For example, the estimates in~\eqref{eq:lawW1infty} and~\eqref{eq:lawW1p+} turn into the estimates 
\begin{align}\label{eq:lawW1p-}
\frac{1}{c_p}\|u_0-u_0'\|_{L^p(\R)} \le \|(u-u_x)(\ledot,t)\|_{L^p(\R)} \le c_p\|u_0-u_0'\|_{L^p(\R)}. 
\end{align}
Since all of these estimates, \eqref{eq:lawW1infty}, \eqref{eq:lawW1p+} and~\eqref{eq:lawW1p-}, hold true when the initial data $(u_0,\mu_0)$ has sufficient decay at both endpoints (and of course also belongs to $\CHdom^+$), it appears that the Sobolev norm $\|u\|_{W^{1,p}(\R)} = \|u\|_{L^p(\R)} + \|u_x\|_{L^p(\R)}$ also serves as an (almost) conserved quantity for all $p\in (\nicefrac{1}{2},\infty]$. 
This suggests that it is possible to extend the conservative Camassa--Holm flow to the corresponding phase spaces. 
\end{remark}

\section*{Acknowledgments}
We are grateful to the anonymous referees for several corrections and suggestions that helped to improve our manuscript as well as to Adrian Constantin, Katrin Grunert, Helge Holden and Darryl Holm for useful comments and hints with respect to the literature.

\end{document}